\newtheorem{theorem}{Theorem}[section]
\newtheorem{proposition}[theorem]{Proposition}
\newtheorem{observation}[theorem]{Observation}
\newtheorem{corollary}[theorem]{Corollary}
\newtheorem{lemma}[theorem]{Lemma}
\newtheorem{claim}{Claim}[theorem]
\theoremstyle{definition}
\newtheorem{definition}[theorem]{Definition}
\newtheorem{example}[theorem]{Example}
\theoremstyle{remark}
\newtheorem{remark}[theorem]{Remark}
\newcommand{\norm}{{\rm norm}}
\newcommand{\pow}{\mathcal{P}}
\newcommand{\R}{\mathbb{R}}
\newcommand{\bs}{\backslash}
\newcommand{\restrict}{\Rsh}
\newcommand{\dom}{{\rm dom}}
\newcommand{\hn}{{\rm hn}}
\newcommand{\HN}{{\rm HN}}
\newcommand{\partialfn}{ {^{\underline{N}}2} }
\begin{document}
	
\title{Properties of Norms on Sets}
\date{\today}
\author{Cody Anderson}
\address{Department of Mathematics\\
University of Nebraska at Omaha\\
Omaha, NE 68182-0243, USA}
\thanks{I would like to thank Mr Patrick Kerrigan for his generous
  support through the {\em Kerrigan Minigrant Program}.} 
\email{codyanderson@unomaha.edu}
\subjclass{Primary 05A10; Secondary:  05A18, 05C15, 05D15}

\begin{abstract}
We investigate the norms appearing in the forcing from combinatorial
point of view. We make first steps towards building a catalog of the norms
appearing in multiple settings and sources, reviewing four norms from
Bartoszy\'nski and Judah \cite{Baju95}, Ros{\l}anowski and Shelah
\cite{RoSh:470,   RoSh:628, RoSh:672}, and Shelah \cite{Sh:f}.
\end{abstract}

\maketitle

\section{Introduction}
For many decades, the field of Set Theory couldn't answer one question, ``is
there an infinite set with a size between the natural numbers and the real
numbers?'' In 1940, Kurt G\"odel showed that Set Theory (ZFC) cannot prove
there is a set in between them, but he did not prove there is no set between
them. By 1963, Paul Cohen proved that ZFC was not strong enough to answer
this question. Mathematicians have since used Cohen's ideas in various ways to
prove that similar questions cannot be answered by standard set theory. 
In particular in ``Set Theory of the Reals'' many independence arguments were
given using some sort of measures on finite sets to construct more
complicated objects, forcing notions. This technique of {\em norms on
	possibilities\/} has a relatively long history already. It was first
applied by Shelah \cite{Sh:207} and then used many times by set
theorists. However, they appeared to loose their interest in the norms they
studied right after getting their consistency results. 

The norms might be of interest without the forcing motivation 
behind them as they are, after all, means to measure finite sets. Our goal
is to take first steps towards cataloging, comparing and contrasting many
norms appearing in the theory of forcing for the  reals. 

In this article we investigate four norms, exploring how they work,
identifying properties that show up and constructing proofs of those
properties. We try to compare the various norms with each other.

\subsection{Notation}
Our notation is pretty standard and compatible with that of Bartoszy\'nski
and Judah \cite{Baju95}. In particular, every natural number (a non-negative
integer) is identified with the set of its predecessors. Thus 
\[N=\{0,1,2,\ldots, N-1\}.\]
The set of all natural numbers is denoted $\omega$, so 
\[\omega=\{0,1,2,3,4,5,\ldots\}.\] 
A function $f$ is the set of ordered pairs such that for all $x,y,$ and $z$,
if $(x,y) \in f$ and $(x,z) \in f$ then $y = z$. 

For any sets $A, B$, we define the set $A$ restricted to $B$, 
$$A\restrict B = \{ a\in A : a \subseteq B \}.$$

\subsection{Preliminaries}
\begin{definition}
	A function $\norm: \pow(A) \rightarrow \R$ is {\em a norm on the set  $A$}
	if   
	\begin{enumerate}
		\item $B\subseteq C \subseteq A$ implies $\norm(B) \leq\norm(C)$, and  
		\item if $|A|>1$, then $\norm(A) > 0$ and if $a\in A$ then $\norm(\{ a \}) \leq 1$.  
	\end{enumerate}
\end{definition}

\begin{example}
	[Standard Counting Norm]
	For any set $B$, we define 
	\[ \norm_0(B) = |B| \]
	that is $\norm_0(B)$ is the number of elements in the set $B$. Then, 
	if $A$ is a set with at least 2 elements then $\norm_0\restriction \pow(A)$
	is a norm on $A$ (we will denote it by just $\norm_0$).   
\end{example}

\section{Exclusion Norm}

\begin{definition}
	\label{def1}
	Let $0<F<G$ be natural numbers. For a set $A\subseteq G$ we define
	\[ \norm_1^{F,G}(A) = \frac{F}{|G \backslash A|+1}\]
	If $F,G$ are understood from the context, we may jus write
	$\norm_1(A)$. 
\end{definition}

\begin{observation}
	$\norm^{F,G}_1$ is a norm on $G$.
\end{observation}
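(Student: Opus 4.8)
The plan is to check the two clauses of the definition of a norm directly, with $A=G$. First I would note that $\norm_1^{F,G}$ is genuinely a function $\pow(G)\to\R$: for every $A\subseteq G$ the quantity $|G\bs A|$ is a natural number, so $|G\bs A|+1\geq 1>0$ and the ratio $F/(|G\bs A|+1)$ is a well-defined real number. So there is nothing to worry about regarding the domain or the codomain.

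For clause (1), suppose $B\subseteq C\subseteq G$. Then $G\bs C\subseteq G\bs B$, hence $|G\bs C|\leq|G\bs B|$ and therefore $|G\bs C|+1\leq|G\bs B|+1$. Since $F>0$ and both denominators are positive, dividing reverses the inequality, giving
\[
\norm_1^{F,G}(B)=\frac{F}{|G\bs B|+1}\ \leq\ \frac{F}{|G\bs C|+1}=\norm_1^{F,G}(C).
\]
For clause (2), first observe that the hypothesis $0<F<G$ forces $G\geq 2$, so $|G|>1$ and the clause is not vacuous. Computing, $\norm_1^{F,G}(G)=F/(|G\bs G|+1)=F/1=F>0$. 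For a singleton $\{a\}$ with $a\in G$ I would use the standing identification of the natural number $G$ with $\{0,1,\ldots,G-1\}$, so that $|G|=G$; then $|G\bs\{a\}|=G-1$ and
\[
\norm_1^{F,G}(\{a\})=\frac{F}{(G-1)+1}=\frac{F}{G}<1,
\]
using $F<G$.

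There is no real obstacle here; the single point deserving a word of care is the bookkeeping with the identification $G=\{0,1,\ldots,G-1\}$, since it is precisely the hypothesis $F<G$ — equivalently $F\leq|G|$ — that yields the singleton bound $\norm_1^{F,G}(\{a\})\leq 1$. Without that hypothesis the second clause could fail, which is why it is included in Definition~\ref{def1}.
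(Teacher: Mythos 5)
Your proof is correct: the paper leaves this Observation unproved as a routine verification, and your direct check of both clauses of the definition (monotonicity via $G\bs C\subseteq G\bs B$, and the computations $\norm_1(G)=F>0$ and $\norm_1(\{a\})=F/G<1$ using $0<F<G$) is exactly the intended argument. Your remark that the hypothesis $F<G$ is precisely what makes the singleton bound work is a nice observation, and nothing is missing.
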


\begin{observation}
	[Limiting Cases]
	Let $0<F<G$ be natural numbers.
	\begin{enumerate}
		\item $\norm^{F,G}_1(\emptyset) = \frac{F}{G + 1}$.
		\item $\norm_1^{F,G}(G) = F$.
	\end{enumerate}
\end{observation}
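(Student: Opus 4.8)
The plan is to prove both parts by direct substitution into Definition~\ref{def1}, the only point requiring a word of care being the convention adopted in the Notation subsection that identifies a natural number with the set of its predecessors, so that $|G| = G$ holds literally rather than merely up to the obvious bijection.

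For part~(1), I would set $A = \emptyset$. Then $G \backslash A = G \backslash \emptyset = G$, and by the identification above $|G \backslash A| = |G| = G$. Substituting into the defining formula gives $\norm_1^{F,G}(\emptyset) = \frac{F}{|G| + 1} = \frac{F}{G+1}$, as claimed. For part~(2), I would set $A = G$. Then $G \backslash A = G \backslash G = \emptyset$, so $|G \backslash A| = 0$, and the formula yields $\norm_1^{F,G}(G) = \frac{F}{0 + 1} = F$.

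I do not anticipate any obstacle here: both computations are immediate once the identification $|G| = G$ is invoked, and the standing hypothesis $0 < F < G$ is not actually needed for the equalities themselves beyond ensuring that the quantities in Definition~\ref{def1} are well formed. The only thing to guard against is writing more than these few lines deserve; a single sentence flagging the ``$|G|=G$'' convention is the substantive content of the argument.
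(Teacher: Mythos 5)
Your computation is correct and is exactly the intended argument: the paper states this Observation without any proof, treating it as an immediate substitution into Definition~\ref{def1}, and your two-line verification (including the remark that $|G|=G$ under the von Neumann identification) is precisely what that omitted proof would be.
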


\begin{observation}
	[Relationship with the standard norm] 
	If $\norm_1(A) = k$, then it follows that 
	\[ |A| = G + 1 - \frac{F}{k} \]
	Notice that this suggests a fairly trivial relationship with the standard
	norm, and as such we expect that properties of the standard norm translate (in an altered form) into $\norm_1$. 
\end{observation}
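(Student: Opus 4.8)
The plan is simply to invert the defining formula of $\norm_1^{F,G}$ from Definition~\ref{def1}. First I would record that, since $F > 0$ and $|G\backslash A| + 1 \geq 1$, the number $k = \norm_1^{F,G}(A) = \frac{F}{|G\backslash A|+1}$ is strictly positive; in particular $k \neq 0$, so dividing by $k$ is legitimate. Clearing the denominator in $k = \frac{F}{|G\backslash A|+1}$ gives $k\bigl(|G\backslash A| + 1\bigr) = F$, and solving for the size of the complement yields $|G\backslash A| = \frac{F}{k} - 1$.

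Next I would pass from the size of the complement to the size of $A$ itself. The hypothesis $A \subseteq G$ of Definition~\ref{def1}, together with the convention that the natural number $G$ is the set $\{0,1,\ldots,G-1\}$ of cardinality $G$, gives the elementary counting identity $|G\backslash A| = G - |A|$. Substituting this into the equation from the previous paragraph produces $G - |A| = \frac{F}{k} - 1$, and rearranging gives $|A| = G + 1 - \frac{F}{k}$, which is exactly the asserted formula.

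I do not expect any real obstacle: the entire argument is one line of algebra once the positivity of $k$ and the identity $|G\backslash A| = G - |A|$ are in place, and both are immediate. The only subtlety worth flagging in a remark is that the right-hand side $G + 1 - \frac{F}{k}$ is guaranteed to be a non-negative integer (and hence genuinely the cardinality of a subset of $G$) precisely when $k$ actually lies in the range of $\norm_1^{F,G}$, i.e.\ when $k = \frac{F}{m+1}$ for some integer $0 \leq m \leq G$; the stated implication is vacuous for other values of $k$, so this causes no difficulty.
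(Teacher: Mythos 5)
Your proof is correct and is exactly the intended argument: the paper states this as an unproved observation, and the one-line inversion of $k=\frac{F}{|G\backslash A|+1}$ together with $|G\backslash A|=G-|A|$ is precisely what is meant. Your closing remark about which values of $k$ actually occur in the range of $\norm_1^{F,G}$ is a reasonable extra precision but not needed.
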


\begin{proposition}
	[Splitting a set]
	\label{prospl}
	Let $F<G$. Suppose we have two sets $A$ and $B$ that partition $G$. Then
	$\norm_1$ of one set is greater than or equal to $\frac{2 F}{G + 2}$ and the
	other is smaller than or equal to this number. 
\end{proposition}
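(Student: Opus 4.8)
The plan is to reduce everything to the standard counting norm, exploiting the fact that $x\mapsto \frac{F}{x+1}$ is decreasing. First I would observe that, since $A$ and $B$ partition $G$, we have $G\bs A = B$ and $G\bs B = A$, so that $\norm_1(A) = \frac{F}{|B|+1}$ and $\norm_1(B) = \frac{F}{|A|+1}$. Writing $a=|A|$ and $b=|B|$, the partition hypothesis gives $a+b=|G|=G$, and the two quantities whose position relative to $\frac{2F}{G+2}$ must be pinned down are just $\frac{F}{b+1}$ and $\frac{F}{a+1}$.

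Next, without loss of generality assume $a\le b$ (otherwise swap the names of $A$ and $B$). I would then claim $\norm_1(B)=\frac{F}{a+1}\ge \frac{2F}{G+2}$ and $\norm_1(A)=\frac{F}{b+1}\le \frac{2F}{G+2}$. Since $F>0$ and all denominators are positive, each inequality clears by cross-multiplication: the first becomes $F(a+b+2)\ge 2F(a+1)$, i.e.\ $b\ge a$, and the second becomes $F(a+b+2)\le 2F(b+1)$, i.e.\ $a\le b$. Both hold by the choice of labelling, which finishes the argument.

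There is no genuine obstacle here; the content is just that among two complementary cardinalities summing to $G$ one is at most $G/2$ and the other at least $G/2$. The only points needing a moment's care are the bookkeeping identity $G\bs A=B$ (and symmetrically $G\bs B=A$), and the degenerate case in which $A$ or $B$ is empty, where the two norm values are $F$ and $\frac{F}{G+1}$ and indeed straddle $\frac{2F}{G+2}$ since $G\ge 1$. Alternatively, one can read the conclusion straight off the Observation relating $\norm_1$ to $\norm_0$: from $|A|=G+1-F/\norm_1(A)$ and $|B|=G+1-F/\norm_1(B)$ together with $|A|+|B|=G$ one gets $\frac{1}{\norm_1(A)}+\frac{1}{\norm_1(B)}=\frac{G+2}{F}$, whence at least one of the reciprocals is $\le \frac{G+2}{2F}$ and the other is $\ge \frac{G+2}{2F}$, which is exactly the assertion.
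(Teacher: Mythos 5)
Your argument is correct and is essentially the paper's own proof: both reduce $\norm_1(A)$ and $\norm_1(B)$ to $\frac{F}{|B|+1}$ and $\frac{F}{|A|+1}$ via the partition, then use the trivial fact that one of the two complementary cardinalities is at most $\frac{G}{2}$ and the other at least $\frac{G}{2}$. Your closing alternative via $\frac{1}{\norm_1(A)}+\frac{1}{\norm_1(B)}=\frac{G+2}{F}$ is the same reduction the paper alludes to in the remark following the proposition.
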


\begin{proof}
	We know that $|A| + |B| = G$ and, without loss of generality, we can
	assume $|A| \geq \frac{G}{2}$ and  $ |B| \leq \frac{G}{2}$. From
	this, it follows that  
	\[ \norm_1(A) = \frac{F}{|G\bs A| + 1} = \frac{F}{|B| + 1} \geq\frac{2 F}{G
		+ 2} \]  
	\[ \norm_1(B) = \frac{F}{|G\bs B| + 1} = \frac{F}{|A| + 1} \leq \frac{2 F}{G
		+ 2} \] 
\end{proof}

Notice how this follows readily from the relationship with the standard
norm. 

\begin{observation}
	We do not have a triangle inequality, that is
	\[\norm_1(A\cup B) \leq \norm_1(A)+\norm_1(B)\]
	does not always hold. Consider $A$ and $B$ that partition $G$, we get 
	\[F = \norm_1(G) \leq \norm_1(A)+\norm_1(B)=\frac{F\cdot
		(|A|+|B|+2)}{(|A|+1) \cdot (|B|+1)}.\]
	When $|A|\cdot |B|\geq 2$, this is a contradiction.
\end{observation}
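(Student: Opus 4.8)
The plan is to refute the inequality by exhibiting an explicit witness rather than arguing in the abstract. Since $\norm_1$ attains its maximum value $F$ on $G$ itself (this is Limiting Cases~(2), $\norm_1^{F,G}(G)=F$), the natural place to look for a failure is a situation where $A\cup B=G$ but neither $A$ nor $B$ is close to all of $G$: then $\norm_1(A\cup B)=F$ is as large as possible while both summands $\norm_1(A)$ and $\norm_1(B)$ are forced to be small.

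Concretely, I would first fix $G\ge 4$ and $0<F<G$, and split $G$ into two disjoint blocks $A,B$ with $A\cup B=G$ and $|A|,|B|\ge 2$ (for instance $G=4$, $F=1$, $A=\{0,1\}$, $B=\{2,3\}$). Next I would evaluate the left-hand side: since $G\setminus(A\cup B)=\emptyset$ we get $\norm_1(A\cup B)=F$. Then I would evaluate the right-hand side using $G\setminus A=B$ and $G\setminus B=A$, obtaining
\[ \norm_1(A)+\norm_1(B)=\frac{F}{|B|+1}+\frac{F}{|A|+1}=F\cdot\frac{|A|+|B|+2}{(|A|+1)(|B|+1)}. \]
The only computation is the final comparison of this quantity with $F$: after clearing denominators it reduces to comparing $|A|+|B|+2$ with $(|A|+1)(|B|+1)=|A|\,|B|+|A|+|B|+1$, i.e. to comparing $1$ with $|A|\,|B|$. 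Hence as soon as $|A|\,|B|\ge 2$ the right-hand side is strictly less than $F$, so $\norm_1(A\cup B)>\norm_1(A)+\norm_1(B)$; with the sample values above this is $\tfrac13+\tfrac13=\tfrac23<1=F$.

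I do not anticipate any real obstacle: once the partition is chosen the claim is a one-line inequality, and the earlier ``Relationship with the standard norm'' observation already explains why no additivity bound should be expected, since $\norm_1$ varies inversely with a cardinality. The only care required is to record the exact hypothesis under which the failure occurs (both blocks nontrivial, equivalently $|A|\,|B|\ge 2$), and perhaps to note in passing that one could instead take $A,B$ overlapping or not covering $G$ and still engineer a failure, but the partition case gives the cleanest and most transparent counterexample.
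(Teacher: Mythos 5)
Your proposal is correct and follows exactly the paper's argument: partition $G$ into $A$ and $B$, note $\norm_1(A\cup B)=\norm_1(G)=F$ while $\norm_1(A)+\norm_1(B)=F\cdot\frac{|A|+|B|+2}{(|A|+1)(|B|+1)}$, and observe this is strictly less than $F$ precisely when $|A|\cdot|B|\geq 2$. The explicit witness $G=4$, $F=1$, $A=\{0,1\}$, $B=\{2,3\}$ is a nice concrete addition but the substance of the argument is identical.
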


\begin{observation}
	If $\norm_2(A\cup B) = j$, $\norm_2(A) = k$, and $\norm_2(B) = l$, then we instead have $j \geq \frac{F}{\frac{F}{k} + \frac{F}{l} - G - 1}$, which follows directly from using $|A\cup B| \leq |A| + |B|$ and $|A| = G + 1 - \frac{F}{k}$.
\end{observation}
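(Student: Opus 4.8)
The plan is to sidestep the (failed) triangle inequality entirely and lean instead on the trivial bound $|A\cup B|\le |A|+|B|$, pushing everything through the ``Relationship with the standard norm'' observation. Concretely, I would first rewrite all three hypotheses as statements about cardinalities: $\norm_1(A\cup B)=j$ gives $|A\cup B|=G+1-\frac{F}{j}$, $\norm_1(A)=k$ gives $|A|=G+1-\frac{F}{k}$, and $\norm_1(B)=l$ gives $|B|=G+1-\frac{F}{l}$. Each of these is legitimate because every value of $\norm_1$ is at least $\norm_1(\emptyset)=\frac{F}{G+1}>0$, so $j,k,l$ are all nonzero and the divisions are harmless.

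The second step is the substitution. Plugging the three equalities into $|A\cup B|\le |A|+|B|$ --- which holds for arbitrary sets by inclusion--exclusion --- gives
\[ G+1-\frac{F}{j} \le \left(G+1-\frac{F}{k}\right)+\left(G+1-\frac{F}{l}\right). \]
Cancelling one copy of $G+1$ from each side, negating, and rearranging to isolate $\frac{F}{j}$ then produces exactly
\[ \frac{F}{j} \ge \frac{F}{k}+\frac{F}{l}-G-1, \]
and the inequality asserted in the observation is obtained from this last relation by solving for $j$.

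That final step --- passing from a lower bound on $\frac{F}{j}$ to the stated lower bound on $j$ --- is where I expect the real work to be, and it is the main obstacle. The passage to reciprocals is only clean once the sign of $D\defined\frac{F}{k}+\frac{F}{l}-G-1$ is pinned down, so I would split into cases according to whether $D>0$. When $D\le 0$ there is nothing to check: the right-hand side $\frac{F}{D}$ of the stated inequality is non-positive (or undefined), whereas $j\ge\frac{F}{G+1}>0$, so the bound holds vacuously. The case $D>0$ is the substantive one, and it is precisely the inversion here --- with its attendant care about the direction of the inequality --- that is the crux of the argument; as a sanity check I would test the reasoning on the disjoint case $A\cap B=\emptyset$, where $|A\cup B|=|A|+|B|$ forces $\frac{F}{j}=D$ and the bound is tight.
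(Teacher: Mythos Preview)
Your approach follows the paper's own sketch exactly, and everything through the derivation of $\frac{F}{j} \geq D$ (with $D \defined \frac{F}{k} + \frac{F}{l} - G - 1$) is correct. The trouble is the final inversion, which you rightly flag as the crux but do not actually carry out. When $D > 0$, taking reciprocals in $\frac{F}{j} \geq D$ \emph{reverses} the inequality: since $j$, $F$, $D$ are all positive, one obtains $j \leq \frac{F}{D}$, not $j \geq \frac{F}{D}$. Your disjoint-case sanity check cannot detect this, because it produces equality and is therefore consistent with either direction; a test with overlapping $A$, $B$ does distinguish them. For instance, with $G = 10$, $F = 5$, $A = \{0,1,2,3,4\}$, $B = \{3,4,5,6,7\}$ one computes $k = l = \tfrac{5}{6}$, $j = \tfrac{5}{3}$, $D = 1$, and $\frac{F}{D} = 5 > j$.

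So the inequality as printed in the observation has the wrong direction: what the two cited facts actually yield is the \emph{upper} bound
\[ j \;\leq\; \frac{F}{\frac{F}{k} + \frac{F}{l} - G - 1} \]
(valid when the denominator is positive). This is almost certainly a typographical slip in the paper rather than a defect in your method; your derivation is sound and proves the correct, reversed inequality. Note also that an upper bound on $j$ is the natural replacement for the failed triangle inequality $j \leq k + l$, whereas a lower bound would add nothing to the monotonicity fact $j \geq \max(k,l)$ already available.
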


\section{Interlude: Properties of the choose function}

\begin{lemma}
	\label{inlem1}
	For every positive integers $k,a,b$ with $k-1\leq b\leq a-k$ we have 
	\[\sum\limits_{i=1}^k (-1)^{i-1} {k-1 \choose i-1}{a-i \choose b} = {a-k
		\choose b-k+1}\]
\end{lemma}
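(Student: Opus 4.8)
The plan is to prove the identity by passing to ordinary generating functions and extracting a single coefficient. First I would reindex with $j=i-1$, turning the left-hand side into $\sum_{j=0}^{k-1}(-1)^j\binom{k-1}{j}\binom{a-1-j}{b}$, and then write each binomial as a coefficient, $\binom{a-1-j}{b}=[x^b](1+x)^{a-1-j}$. Here $a-1-j\geq a-k\geq b\geq 0$ on the relevant range, so these are honest polynomials and $[x^b]$ picks out the intended coefficient; moreover, since the sum is finite, coefficient extraction commutes with it. Thus the left-hand side equals $[x^b]\sum_{j=0}^{k-1}(-1)^j\binom{k-1}{j}(1+x)^{a-1-j}$.

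Next I would factor out $(1+x)^{a-1}$ and recognize the remaining sum as a binomial expansion: $\sum_{j=0}^{k-1}\binom{k-1}{j}\bigl(-(1+x)^{-1}\bigr)^j=\bigl(1-(1+x)^{-1}\bigr)^{k-1}=\bigl(x/(1+x)\bigr)^{k-1}$, where the negative power of $1+x$ is taken in the ring of formal power series $\mathbb{Z}[[x]]$ (legitimate, since $1+x$ is a unit there). Multiplying back, $(1+x)^{a-1}\cdot x^{k-1}(1+x)^{-(k-1)}=x^{k-1}(1+x)^{a-k}$, which is again a genuine polynomial because $a-k\geq 0$ (indeed $a-k\geq b\geq k-1\geq 0$). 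Therefore the left-hand side equals $[x^b]\,x^{k-1}(1+x)^{a-k}=[x^{b-k+1}](1+x)^{a-k}=\binom{a-k}{b-k+1}$, where the index shift is meaningful precisely because $b-k+1\geq 0$, which is the hypothesis $k-1\leq b$.

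As an alternative (and a good sanity check on where the hypotheses are used) I would run an induction on $k$. The base case $k=1$ reads $\binom{a-1}{b}=\binom{a-1}{b}$. For the step, apply Pascal's rule $\binom{k}{j}=\binom{k-1}{j}+\binom{k-1}{j-1}$, split the sum, and invoke the inductive hypothesis on each piece — the second one after reindexing, with $a$ replaced by $a-1$. The two values produced are $\binom{a-k}{b-k+1}$ and $\binom{a-1-k}{b-k+1}$, and their difference is $\binom{a-1-k}{b-k}=\binom{a-(k+1)}{b-(k+1)+1}$ by Pascal's rule again, completing the induction.

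I do not expect a serious obstacle; the only thing requiring care is bookkeeping. In the generating-function argument one must be explicit that all negative powers of $1+x$ live in $\mathbb{Z}[[x]]$ and that the final expression $x^{k-1}(1+x)^{a-k}$ is a polynomial, so that the coefficient extraction $[x^{b-k+1}]$ is unambiguous. In the inductive argument one must verify that the chain $k-1\leq b\leq a-k$ is preserved under the substitutions $(a,k)\mapsto(a-1,k-1)$ and that positivity of the parameters is maintained — both of which are immediate from $b\geq k\geq 1$ and $b\leq a-k-1$ in the inductive step.
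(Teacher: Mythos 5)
Your proposal is correct, and in fact contains two complete arguments. The inductive argument is essentially the paper's proof: the paper also inducts on $k$, applies Pascal's rule ${c\choose d}+{c\choose d+1}={c+1\choose d+1}$ to split the sum, and uses the inductive hypothesis twice, the second time with $a$ replaced by $a-1$; your version is slightly cleaner in that you observe the boundary terms ${k-1\choose k}$ and ${k-1\choose -1}$ vanish, whereas the paper explicitly pulls out the $(k{+}1)$-st term and reabsorbs it. Your primary argument via generating functions is a genuinely different route: writing ${a-1-j\choose b}=[x^b](1+x)^{a-1-j}$, collapsing the alternating sum to $\bigl(x/(1+x)\bigr)^{k-1}$ by the binomial theorem in $\mathbb{Z}[[x]]$, and reading off $[x^{b-k+1}](1+x)^{a-k}$. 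That computation is shorter, makes transparent exactly where the hypothesis $k-1\leq b$ enters (the shift of the extracted exponent), and generalizes immediately to variants of the identity, at the cost of importing formal-power-series machinery; the paper's induction stays entirely within elementary binomial identities, which fits its self-contained ``interlude'' style. Both of your arguments handle the hypotheses $k-1\leq b\leq a-k$ correctly, so there is no gap.
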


\begin{proof}
	We prove the lemma by induction on $k$. 
	
	\noindent Base case $k = 1$: 
	\[\sum\limits_{i=1}^1 (-1)^{i-1}{k-1\choose i-1}{a-i\choose b} =
	(-1)^0 {0\choose 0}{a-1 \choose b} = {a-1 \choose b} = {a-k \choose
		b-k+1}\] 
	Inductive case: Assume the identity is true for $k$ and all suitable $a,b$,
	we will show it for $k+1$. Starting with the formula for $k+1$, pull out the
	$k+1$ term from the sum:
	\[\begin{array}{rl}
	\displaystyle s = & \displaystyle\sum\limits_{i=1}^{k+1} (-1)^{i-1} {k
		\choose i-1} {a - i  \choose b}\\
	\ \\
	\displaystyle s = & \displaystyle (-1)^k{k\choose k}{a - k - 1 \choose b} + 
	\sum\limits_{i=1}^k (-1)^{i-1} {k \choose i-1} {a - i \choose b}
	\end{array}
	\]  
	Applying the identity ${c \choose d} + {c \choose d+1} = {c+1 \choose d+1}$, 
	we may split the sum into two:
	\[s = (-1)^k {a-k-1\choose b} + \sum\limits_{i=1}^k (-1)^{i-1}
	{k-1\choose i-1}{a-i \choose b} + \sum\limits_{i=2}^k (-1)^{i-1}{k-1
		\choose i-2} {a-i\choose b} \] 
	Using the inductive hypothesis we get
	\[\begin{array}{rl}
	\displaystyle s = & \displaystyle (-1)^k {a-k-1\choose b} +
	\sum\limits_{i=2}^k (-1)^{i-1}{k-1 \choose i-2}
	{a-i\choose b} + {a-k \choose b - k + 1}\\
	\displaystyle s = & \displaystyle\sum\limits_{i=2}^{k+1} (-1)^{i-1} {k-1\choose
		i-2}{a-i\choose b} + {a-k\choose b-k+1} 
	\end{array}\]
	Adjusting the sum to run from $i = 1$ to $k$ we get 
	\[s =-\sum\limits_{i=1}^{k} (-1)^{i-1} {k-1\choose i-1}{a-i-1\choose
		b}+{a-k \choose b-k+1}. \] 
	Now using the inductive hypothesis for $k$ and $a-1, b$ we conclude 
	\[s= {a-k\choose b-k+1} - {a-k-1\choose b-k+1} = {a-k-1 \choose b-k} =
	{a -(k+1)\choose b -(k+1) + 1}. \] 
\end{proof}

\begin{lemma}
	\label{inlem2}
	For positive integers $k,a,b$ such that $k-1\leq b$ and $b+k\leq a$ we have  
	\[\sum\limits_{i=1}^k (-1)^{i-1} {k \choose i} {a - i \choose b} =
	\sum\limits_{i=i}^{k} {a - i \choose b - i + 1}.\]
\end{lemma}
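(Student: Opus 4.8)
The plan is to deduce this from Lemma~\ref{inlem1} by splitting each coefficient $\binom{k}{i}$ using the hockey-stick identity $\binom{k}{i}=\sum_{m=i}^{k}\binom{m-1}{i-1}$ and then interchanging the order of summation, rather than running a fresh induction. (Here I read the right-hand side as $\sum_{i=1}^{k}\binom{a-i}{b-i+1}$.)

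First I would rewrite the left-hand side as
\[\sum_{i=1}^{k}(-1)^{i-1}\binom{k}{i}\binom{a-i}{b}=\sum_{i=1}^{k}(-1)^{i-1}\left(\sum_{m=i}^{k}\binom{m-1}{i-1}\right)\binom{a-i}{b},\]
and then swap the two summations to obtain
\[\sum_{m=1}^{k}\sum_{i=1}^{m}(-1)^{i-1}\binom{m-1}{i-1}\binom{a-i}{b}.\]
Now I would apply Lemma~\ref{inlem1} with $k$ replaced by $m$ to each inner sum; this is legitimate provided $m-1\le b\le a-m$, which I would verify from the hypotheses: $m\le k$ gives $m-1\le k-1\le b$, and $m\le k$ together with $b+k\le a$ gives $b\le a-k\le a-m$. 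Hence the inner sum equals $\binom{a-m}{b-m+1}$, and summing over $m$ produces $\sum_{m=1}^{k}\binom{a-m}{b-m+1}$, which is the right-hand side after renaming $m$ as $i$.

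The only point requiring care is checking that the hypotheses of Lemma~\ref{inlem1} hold uniformly for every $m\in\{1,\dots,k\}$ and that the hockey-stick expansion is indexed correctly; both are routine once written out. If one prefers to avoid the index juggling, an equally valid route is a direct induction on $k$ patterned on the proof of Lemma~\ref{inlem1}, peeling off the $i=k$ term and applying Pascal's rule $\binom{k}{i}=\binom{k-1}{i-1}+\binom{k-1}{i}$; but that essentially reproves Lemma~\ref{inlem1} in disguise, so the reduction above is the cleaner choice, and I expect no genuine obstacle.
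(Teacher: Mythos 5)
Your argument is correct, and it takes a genuinely different route from the paper. The paper proves Lemma~\ref{inlem2} by a second induction on $k$: it peels off the $i=k+1$ term, applies Pascal's rule $\binom{k+1}{i}=\binom{k}{i-1}+\binom{k}{i}$, handles one piece by the inductive hypothesis and the other by Lemma~\ref{inlem1}. You instead eliminate the induction entirely by expanding $\binom{k}{i}=\sum_{m=i}^{k}\binom{m-1}{i-1}$ (hockey-stick), interchanging the order of summation over the triangle $1\le i\le m\le k$, and applying Lemma~\ref{inlem1} once for each $m$; your verification that $m-1\le b\le a-m$ holds for all $m\le k$ under the hypotheses $k-1\le b$ and $b+k\le a$ is exactly the needed check, and the interchange is over a finite sum so there is no convergence issue. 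In effect your hockey-stick expansion unrolls the paper's induction: each layer $\binom{m-1}{i-1}$ corresponds to one application of Pascal's rule, and your computation makes it transparent why the answer is a sum of $k$ terms $\binom{a-m}{b-m+1}$, one per layer. What the paper's version buys is uniformity of method with Lemma~\ref{inlem1} (both are inductions of the same shape); what yours buys is brevity and the elimination of a redundant induction, at the cost of one summation interchange. Both are complete proofs, and your reading of the right-hand side's lower limit as $i=1$ (correcting the paper's typo ``$i=i$'') is the intended one.
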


\begin{proof} 
	We show the lemma by induction on $k$ (for all $a,b$). 
	
	\noindent Base case $k = 1$:
	\[ \sum\limits_{i=1}^{1} (-1)^{i-1} {1 \choose i}{a -i \choose b} = {a
		- 1 \choose b} = {a - k \choose b-k+1} = \sum\limits_{i=1}^1 {a -
		i \choose b-i+1}  \] 
	Inductive case: Assuming the identity holds for $k$ and all suitable $a,b$
	we  show it holds for $k+1$. Starting with the formula for $k+1$, pull out
	the $k+1$ from the sum: 
	\[\begin{array}{rl}
	\displaystyle s = & \displaystyle\sum\limits_{i=1}^{k+1} (-1)^{i-1} {k+1
		\choose i} {a - i \choose b}\\
	\displaystyle s = &\displaystyle (-1)^{k} {k+1\choose k+1} {a-k-1\choose b} + 
	\sum\limits_{i=1}^{k}(-1)^{i-1}{k+1\choose i}{a-i\choose b}.
	\end{array} \] 
	Using ${c \choose d} + {c \choose d+1} = {c+1 \choose d+1}$, we split the
	sum into two, getting  
	\[s= (-1)^{k} {a-k-1\choose b} +
	\sum\limits_{i=1}^{k}(-1)^{i-1}{k\choose i-1}{a-i\choose b} +
	\sum\limits_{i=1}^{k}(-1)^{i-1}{k\choose i}{a-i\choose b}. \] 
	It follows from the inductive hypothesis for $k$ and $a,b$ that 
	\[s= (-1)^{k} {a-k-1\choose b} +
	\sum\limits_{i=1}^{k}(-1)^{i-1}{k\choose i-1}{a-i\choose b} +
	\sum\limits_{i=1}^k{a - i \choose b-i+1}. \] 
	We can put the $k+1$ back into the first sum getting 
	\[s= \sum\limits_{i=1}^{k+1}(-1)^{i-1}{k\choose i-1}{a-i\choose b} +
	\sum\limits_{i=1}^k{a - i \choose b-i+1}. \] 
	Applying Lemma \ref{inlem2} we conclude 
	\[s= {a-k-1 \choose b - k} + \sum\limits_{i=1}^k{a - i \choose b-i+1}
	= \sum\limits_{i=1}^{k+1}{a - i \choose b-i+1}. \] 
\end{proof}

\begin{lemma}
	\[ \sum\limits_{i=0}^N {N\choose i} n^i = (n+1)^N \]
\end{lemma}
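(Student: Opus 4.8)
The plan is to prove the identity by induction on $N$, in the same spirit as Lemmas~\ref{inlem1} and~\ref{inlem2}, using the same Pascal rule ${c \choose d} + {c \choose d+1} = {c+1 \choose d+1}$ that was employed there. (Alternatively one may simply observe that the statement is the binomial theorem applied to $(n+1)^N = (n+1)^N\cdot 1^0$, or give the one-line combinatorial argument: expand the product of $N$ copies of $n+1$ and group the resulting monomials by the size $i$ of the set of factors from which the summand $n$ was chosen, obtaining ${N\choose i}$ copies of $n^i$. But the inductive proof is self-contained and matches the local style of this section.)

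For the base case $N=0$ the sum consists of the single term ${0\choose 0}n^0 = 1 = (n+1)^0$. For the inductive step, assume $\sum_{i=0}^N {N\choose i} n^i = (n+1)^N$ and multiply through by $n+1$:
\[(n+1)^{N+1} = \sum\limits_{i=0}^N {N\choose i} n^{i+1} + \sum\limits_{i=0}^N {N\choose i} n^i.\]
Reindexing the first sum by $j=i+1$ rewrites it as $\sum_{j=1}^{N+1}{N\choose j-1}n^j$.

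The remaining — and only mildly fussy — step is the index bookkeeping: combine the two sums over the common range $1\le i\le N$ via ${N\choose i-1}+{N\choose i}={N+1\choose i}$, and separately account for the isolated $i=0$ term of the second sum (coefficient ${N\choose 0}=1={N+1\choose 0}$) and the isolated $i=N+1$ term of the first sum (coefficient ${N\choose N}=1={N+1\choose N+1}$). This yields exactly $\sum_{i=0}^{N+1}{N+1\choose i}n^i$, completing the induction. I do not expect any genuine obstacle here; the only thing to watch is matching the ranges of the two reindexed sums, precisely as in the earlier lemmas of this section.
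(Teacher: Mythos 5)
Your induction is correct: the base case $N=0$ gives $1=(n+1)^0$, and the inductive step via multiplying by $n+1$, reindexing, and applying ${N\choose i-1}+{N\choose i}={N+1\choose i}$ is the standard and airtight argument. However, it is a genuinely different route from the paper's. The paper proves the identity by double counting: it considers the set ${^{\underline{N}}n}$ of partial functions from $N$ to $n$, shows via an explicit bijection with the total functions ${^N(n+1)}$ (send $f$ to the set of pairs whose value lies in $n$, i.e.\ discard the points mapped to the extra value) that this set has size $(n+1)^N$, and then partitions ${^{\underline{N}}n}$ by domain size $i$ to count it as $\sum_{i=0}^N{N\choose i}n^i$. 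Your inductive proof is more self-contained and, as you note, matches the algebraic style of Lemmas \ref{inlem1} and \ref{inlem2}; the paper's bijective proof buys something else, namely it introduces and exercises the partial-function machinery (the sets of the form ${^{\underline{N}}n}$, in particular $\partialfn$) that becomes central in the later section on Hall norms, so the choice of proof there is partly expository. Your parenthetical combinatorial one-liner (grouping monomials in the expansion of $(n+1)^N$) is essentially a compressed version of the paper's counting argument, so you were aware of that option; either write-up would be acceptable.
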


\begin{proof}
	Consider the set ${^{\underline{N}}n}$ of all partial functions from $N$ to
	$n$, where for $f \in {^{\underline{N}}n}$ we have $\dom(f) \subseteq N$
	and $f: \dom(f) \rightarrow n$. Also consider the set ${^{N}(n+1)}$ of
	total functions from $N$ to $n+1$.
	\begin{claim}
		\label{cl1}
		$|{^{\underline{N}}n}| = |{^{N}(n+1)}|$
	\end{claim}
	\begin{proof}[Proof of the Claim]
		Consider the bijection $B:{^N (n+1)}\longrightarrow{^{\underline{N}}n}$ where
		for $f \in {^N (n+1)}$, $B(f) = \{ (a,b) \in f : b \in n \}$.
	\end{proof}
	
	Using Claim \ref{cl1}, we have
	$|{^{\underline{N}}n}| = |{^{N}(n+1)}| = (n+1)^N$. If we have a set
	$z \subseteq N$ such that $|z| = i$, there are $n^i$ functions from $z$ to
	$n$, furthermore, then there are ${N \choose i}$ sets such that $|z| =
	i$.
	If we consider the set $P_i$ of all partial functions $f$ where $|f| = i$,
	we find $|P_i| = {N\choose i} n^i$. Taking the union over all $i$, we get
	${^{\underline{N}}n} = \bigcup\limits_{0\leq i \leq N} P_i$, and as
	$P_i \cap P_j$ is empty when $i \neq j$, we get
	$|{^{\underline{N}}n}| = \sum\limits_{i = 0}^N {N \choose i} n^i = (n+1)^N$.
\end{proof}

\section{Subset Norm}
In this section we will look at a norm introduced (in a different
formulation) in Fremlin and Shelah \cite{FrSh:406}. It was also used in
Bartoszynski and Judah \cite[Section 7.2]{Baju95}. 

\subsection{The norm and its basic properties}

\begin{definition}
	\label{def2}
	Let $n,G$ be positive natural numbers such that $2^n$ divides $G$. 
	\begin{enumerate}
		\item We define the set 
		\[X_n^G = \big\{A\subseteq G:|A|=\frac{G}{2^n}\big\}.\]
		That is to say, $X_n^G$ is the set of all subsets of $G$ that are of
		size $\frac{G}{2^n}$. 
		\item Let $A \subseteq X_n^G$, we define 
		\[\norm_2^{n,G}(A) = \min\{|x| : x \subseteq G \land (\forall a \in A)
		x \nsubseteq a\}.\] 
	\end{enumerate}
	If $n,G$ are understood we may write $\norm_2$ or $X$.
\end{definition}

Until we say otherwise, let $n,G$ be fixed with $2^n|G$ and let
$H=\frac{G}{2^n}$, $\norm_2$ be $\norm_2^{n,G}$ and $X$ be $X_n^G$.  

\begin{lemma}
	\[ \norm_2(X) = H+1 \]
\end{lemma}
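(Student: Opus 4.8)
The plan is to show both directions of the equality $\norm_2(X) = H+1$ directly from the definition, where $X = X_n^G$ is the collection of \emph{all} $H$-element subsets of $G$ (here $H = G/2^n$).

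\textbf{The upper bound.} First I would exhibit a set $x \subseteq G$ with $|x| = H+1$ that witnesses $\norm_2(X) \le H+1$; in fact \emph{any} subset of $G$ of size $H+1$ works. Indeed, fix such an $x$ and let $a \in X$ be arbitrary. Since $|a| = H < H+1 = |x|$, we cannot have $x \subseteq a$ (a set cannot be contained in a strictly smaller set). Hence $x$ satisfies $(\forall a \in X)\, x \nsubseteq a$, so the minimum in the definition of $\norm_2(X)$ is at most $|x| = H+1$. This step is essentially trivial and just needs the observation that $|G| = 2^n H \ge 2H \ge H+1$ so that a set of size $H+1$ exists inside $G$.

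\textbf{The lower bound.} The heart of the argument is showing $\norm_2(X) \ge H+1$, i.e. that \emph{no} set $x \subseteq G$ with $|x| \le H$ can satisfy $(\forall a \in X)\, x \nsubseteq a$. So suppose $|x| \le H$. I want to produce some $a \in X$ with $x \subseteq a$; this requires $a$ with $|a| = H$ and $a \supseteq x$. Since $|x| \le H$ and $|G| = 2^n H \ge H$, there is enough room: pick any set $a$ with $x \subseteq a \subseteq G$ and $|a| = H$ (extend $x$ by $H - |x|$ further elements of $G$, which is possible because $|G \setminus x| = 2^n H - |x| \ge 2^n H - H = (2^n-1)H \ge H - |x|$). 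Then $a \in X_n^G$ and $x \subseteq a$, contradicting the assumption that $x$ is a witness. Therefore every witness $x$ has $|x| \ge H+1$, giving $\norm_2(X) \ge H+1$.

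\textbf{Conclusion.} Combining the two bounds yields $\norm_2(X) = H+1$. I do not anticipate a serious obstacle here; the only thing to be careful about is the bookkeeping with cardinalities to confirm that $G$ is large enough both to contain an $(H+1)$-element set and to extend any $x$ with $|x| \le H$ to an $H$-element set — and both follow from $2^n \ge 2$, i.e. $n \ge 1$, which is part of the standing hypothesis that $n$ is a positive natural number with $2^n \mid G$. One could also phrase the lower bound contrapositively as "$\norm_2(A) \le |A|$-type" reasoning, but the direct extension argument above is cleanest.
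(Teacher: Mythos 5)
Your proof is correct and follows essentially the same two-step argument as the paper: a set of size $H+1$ cannot be contained in any $H$-element set, and any set of size at most $H$ extends to a member of $X$. The only difference is that you spell out the cardinality bookkeeping (that $G$ is large enough), which the paper leaves implicit.
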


\begin{proof}
	Let $x$ be any subset of $G$ such that $|x| = H+1$. By definition, $x
	\not\subseteq a$ for any $a\in X$, and therefore $\norm_2(X) \leq
	H+1$. But if $b$ is any subset of $G$ such that $|b| \leq H$, there is
	an $a\in X$ such that $b \subseteq a$. This is because $X$ contains
	every subset of size $H$ and any subset of size less than $H$ is a
	subset of some set with size $H$. As $X$ contains every set of size
	$H$, it must be that $\norm_2(X) > H$. And thus we have $\norm_2(X) =
	H+1$.  
\end{proof}

\begin{lemma}
	For $A, B \subseteq X$,
	\[ \norm_2(A) + \norm_2(B) \ge \norm_2(A \cup B ) \ge \max(\norm_2(A),
	\norm_2(B)) \] 
\end{lemma}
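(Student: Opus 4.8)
The plan is to establish the two inequalities separately: the right-hand one, $\norm_2(A\cup B)\ge\max(\norm_2(A),\norm_2(B))$, by a monotonicity observation, and the left-hand one by gluing together optimal witnesses. For the monotonicity, suppose $A'\subseteq A''\subseteq X$; if $x\subseteq G$ satisfies $x\nsubseteq a$ for every $a\in A''$, then the same holds for every $a\in A'$ since $A'\subseteq A''$, so the family of sets over which the minimum in $\norm_2(A'')$ is taken is contained in the family for $\norm_2(A')$, and a minimum over a smaller family is at least as large; hence $\norm_2(A')\le\norm_2(A'')$. (These minima are genuinely attained, since the defining families are nonempty: because $n\ge1$ we have $H=G/2^n<G$, so $x=G$ is not a subset of any member of $X$, hence not of any member of $A''$.) Applying this with $A''=A\cup B$ and $A'$ equal to $A$, then to $B$, yields $\norm_2(A\cup B)\ge\norm_2(A)$ and $\norm_2(A\cup B)\ge\norm_2(B)$, which is the right-hand inequality.

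For the left-hand inequality I would fix a set $x_A\subseteq G$ with $|x_A|=\norm_2(A)$ and $x_A\nsubseteq a$ for all $a\in A$, and similarly $x_B$ for $B$, and then consider $x=x_A\cup x_B\subseteq G$. For any $c\in A\cup B$: if $c\in A$ then $x_A\nsubseteq c$, so $x\supseteq x_A$ forces $x\nsubseteq c$; if $c\in B$ the same argument with $x_B$ applies. Hence $x$ belongs to the family defining $\norm_2(A\cup B)$, so $\norm_2(A\cup B)\le|x|=|x_A\cup x_B|\le|x_A|+|x_B|=\norm_2(A)+\norm_2(B)$.

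I do not anticipate a genuine obstacle: both halves are short manipulations of the definition of $\norm_2$ as the least size of a set that is not contained in any member of the given family. The only points deserving a line of care are the existence of the witnesses $x_A,x_B$ (handled above via $n\ge1$, equivalently $H<G$) and the degenerate cases $A=\emptyset$ or $B=\emptyset$, where $\norm_2(\emptyset)=0$ because the empty set vacuously meets the defining condition and both inequalities collapse to equalities. If one prefers, the right-hand inequality may instead be cited from clause (1) of the definition of a norm, once $\norm_2$ is verified to be a norm on $X$.
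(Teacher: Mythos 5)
Your proof is correct and follows essentially the same route as the paper's: the right-hand inequality comes from observing that the defining family for $\norm_2(A\cup B)$ is contained in those for $\norm_2(A)$ and $\norm_2(B)$ (you phrase this as monotonicity of $\norm_2$ under inclusion, the paper instances it directly), and the left-hand inequality comes from taking the union of optimal witnesses for $A$ and $B$. Your added remarks on the nonemptiness of the witness families and the case $A=\emptyset$ are sound but not needed beyond what the paper already does.
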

\begin{proof}
	Suppose we have 
	\[ k = \norm_2(A\cup B) = \min\big\{ |x| : x \subseteq G \land
	\big(\forall a \in (A \cup B)\big)\big(x \nsubseteq a
	\big)\big\}. \]  
	Define: 
	\[\begin{array}{ll}
	Z_A = \big\{ x : x \subseteq G \land \big(\forall a \in A\big)\big(x
	\nsubseteq a \big)\big\},\\
	Z_B = \big\{ x : x \subseteq G \land \big(\forall b \in B\big)\big(x
	\nsubseteq b\big)\big \},\\ 
	Z_{AB} =\big\{ x : x \subseteq G \land \big(\forall a \in (A \cup
	B)\big)\big( x \nsubseteq a\big)\big\}.
	\end{array}\]
	Clearly, $Z_{AB} \subseteq Z_A$ and $Z_{AB} \subseteq Z_B$. By
	definition, we know  
	\[ \begin{array}{l}
	\norm_2(A) = \min\{|x|: x\in Z_A\},\\
	\norm_2(B) = \min\{ |x|: x\in Z_B \},\\
	\norm_2(A\cup B) = \min\{ |x|: x\in Z_{AB} \}.
	\end{array}\]
	Let $l \in Z_{AB}$ be such that $|l| = \norm_2(A\cup B)$. And as
	$Z_{AB} \subseteq Z_A$, it follows that $l \in Z_A$. And consequently,
	$\norm_2(A) \leq |l| = k$. Similarly, $\norm_2(B) \leq
	k$. Consequently, $\norm_2(A\cup B ) \ge \max(\norm_2(A),
	\norm_2(B))$. 
	
	Let $j \in Z_A$ be such that $|j| = \norm_2(A)$ and $l \in Z_B$ such
	that $|l| = \norm_2(B)$. By definition, we know $(\forall a\in A)(
	j\nsubseteq a)$ and similarly, $(\forall b \in B)( l\nsubseteq b)$. From
	these, we know $(\forall a \in A) (j\cup l \nsubseteq a)$ and $(\forall
	b \in B )( j\cup l \nsubseteq b)$. Combining these, we see $j\cup l \in
	Z_{AB}$. Thus we have $\norm_2(A)+\norm_2(B) = |j|+|l| \ge |j\cup l|
	\ge \norm_2(A \cup B) $. 
\end{proof}

It is common (but not universal) for norms to exhibit a triangle inequality. That is to say for a norm, $N$, with sets $A,B$ where $N(A)$ and $N(B)$ are defined, we often have  $N(A\cup B) \leq N(A) + N(B)$. We already know this is true for the standard counting norm, and now we have seen it is true for the subset norm. 

\begin{definition}
	For $A\subseteq X$ and $l\in G$ we define
	\[A(l) = \{x \in A : l\in x\}.\] 
\end{definition}

\begin{lemma}
	For $A\subseteq X$ and $l\in G$ we have 
	\[\norm_2(A(l)) \ge \norm_2(A) - 1. \]
\end{lemma}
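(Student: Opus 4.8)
The plan is to show that from a witness for $\norm_2(A(l))$ we can manufacture a witness for $\norm_2(A)$ that is at most one element larger. Let $x \subseteq G$ be a set with $|x| = \norm_2(A(l))$ such that $x \nsubseteq a$ for every $a \in A(l)$; such an $x$ exists by the definition of $\norm_2$. The key observation is that $x \cup \{l\}$ is a witness for the whole family $A$. Indeed, take any $a \in A$. If $l \notin a$, then already $\{l\} \nsubseteq a$, so $x \cup \{l\} \nsubseteq a$. If $l \in a$, then $a \in A(l)$, so by the choice of $x$ we have $x \nsubseteq a$, and hence $x \cup \{l\} \nsubseteq a$ as well. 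In either case $x \cup \{l\} \nsubseteq a$.

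Therefore $x \cup \{l\}$ lies in the set $\{y \subseteq G : (\forall a \in A)\, y \nsubseteq a\}$ over which $\norm_2(A)$ takes its minimum, so
\[
\norm_2(A) \le |x \cup \{l\}| \le |x| + 1 = \norm_2(A(l)) + 1,
\]
which rearranges to $\norm_2(A(l)) \ge \norm_2(A) - 1$, as desired.

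I do not anticipate a serious obstacle here; the argument is a one-line manipulation once the right witness $x \cup \{l\}$ is guessed. The only point requiring a little care is the degenerate case where the minimum defining $\norm_2(A(l))$ is taken over an empty or trivial collection — for instance if $A(l) = \emptyset$, in which case $\emptyset$ witnesses $\norm_2(A(l)) = 0$ and the inequality reads $\norm_2(A) \le 1$, which still holds since $x \cup \{l\} = \{l\}$ is a legitimate witness for $A$ (every $a \in A$ containing $l$ would lie in $A(l)$, contradiction, so in fact no $a \in A$ contains $l$). So the same construction covers the edge cases with no modification.
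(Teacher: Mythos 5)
Your proof is correct and is essentially identical to the paper's: both take a minimal witness $x$ for $A(l)$ and check by the same two-case analysis (whether $l\in a$ or not) that $x\cup\{l\}$ witnesses $\norm_2(A)\leq\norm_2(A(l))+1$. Your extra remark on the degenerate case $A(l)=\emptyset$ is a harmless addition that the paper omits.
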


\begin{proof}
	Let $A\subseteq X$ and $l_0\in G$.
	
	Let $x\subseteq G$ be such that $|x|=\norm_2\big(A(l)\big)$ and
	$x\not\subseteq a$ for any $a\in A(l)$. Then for each $b\in A$ one of the
	following two cases holds.
	\medskip
	
	\noindent {\sc Case 1}\quad $l\in b$.\\
	Then $b\in A(l)$ so $x\not\subseteq b$ and hence also $x\cup\{l\}
	\not\subseteq b$.
	\medskip
	
	\noindent {\sc Case 2}\quad $l\notin b$.\\
	Then clearly $x\cup\{l\} \not\subseteq b$.
	\medskip
	
	Consequently, the set $x\cup\{l\}$ witnesses that $\norm_2(A)\leq
	\norm_2\big(A(l)\big)+1$. 
\end{proof}

\subsection{Relationship with the standard norm}	
To better understand the behavior of $\norm_2$, we will look at what
the value of $\norm_2(A)$ says about the number of elements in it;
identifying both the smallest number of elements the set $A$ can have
and the largest, while maintaining the value of $\norm_2(A)$.

\begin{proposition}
	[Lower bound when given a value of the norm] 
	\label{lbn2}
	Given an arbitrary non-empty set $A \subseteq X$ such that
	$\norm_2(A)\geq k+1$. Then 
	\[\frac{|A|}{|X|} \ge\frac{(G - H)! (H - k)!}{(G - k)!}.\]
\end{proposition}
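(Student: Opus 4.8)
The plan is to rewrite the hypothesis $\norm_2(A)\ge k+1$ as a covering condition on $A$ and then run a double-counting argument over the $k$-element subsets of $G$. First I would unwind the definition: $\norm_2(A)$ is the least cardinality of a set in $Z_A=\{x\subseteq G:(\forall a\in A)\,x\nsubseteq a\}$, so $\norm_2(A)\ge k+1$ says exactly that $Z_A$ contains no set of size $\le k$; equivalently, \emph{every} $x\subseteq G$ with $|x|=k$ is a subset of some $a\in A$. (It suffices to phrase this for sets of size exactly $k$, since any set of size $<k$ sits inside one of size $k$.) I would also record at the outset that $0\le k\le H$: since $A\subseteq X$ we have $Z_X\subseteq Z_A$, hence $k+1\le\norm_2(A)\le\norm_2(X)=H+1$ by the earlier lemma, so $\binom{H}{k}$ is a positive integer and the divisions below are legitimate.

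Next I would count, in two ways, the number $P$ of pairs $(x,a)$ with $a\in A$, $|x|=k$, and $x\subseteq a$. Organizing the count by the second coordinate: each $a\in A$ has $|a|=H$ and therefore contains exactly $\binom{H}{k}$ subsets of size $k$, so $P=|A|\binom{H}{k}$. Organizing it by the first coordinate: there are $\binom{G}{k}$ sets $x\subseteq G$ of size $k$, and by the covering statement each contributes at least one pair, so $P\ge\binom{G}{k}$. Combining,
\[|A|\binom{H}{k}\ \ge\ \binom{G}{k},\qquad\text{i.e.}\qquad |A|\ \ge\ \frac{\binom{G}{k}}{\binom{H}{k}}.\]

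Finally I would divide by $|X|=\binom{G}{H}$ and cancel factorials:
\[\frac{|A|}{|X|}\ \ge\ \frac{\binom{G}{k}}{\binom{H}{k}\binom{G}{H}}\ =\ \frac{(G-H)!\,(H-k)!}{(G-k)!},\]
which is the asserted bound. There is no real obstacle here: the only two points needing a moment's care are the translation of the norm inequality into the covering statement (and the reduction to sets of size exactly $k$) and the observation $k\le H$, after which everything is routine bookkeeping with binomial coefficients. The extreme case $k=0$ just reduces to $|A|\ge 1$, which holds because $A$ is non-empty, consistent with the right-hand side being $1/\binom{G}{H}=1/|X|$.
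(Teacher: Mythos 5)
Your proposal is correct and follows essentially the same route as the paper: translate $\norm_2(A)\ge k+1$ into the statement that every $k$-subset of $G$ lies in some $a\in A$, count that each $a$ contains $\binom{H}{k}$ such subsets to get $|A|\ge\binom{G}{k}/\binom{H}{k}$, and divide by $|X|=\binom{G}{H}$. Your write-up merely makes explicit two points the paper leaves implicit (the double count of pairs $(x,a)$ and the check that $k\le H$), which is a welcome tightening rather than a different argument.
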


\begin{proof}
	There are $ G\choose k$ subsets of $G$ with the size $k$ and each
	subset of size $H$ contains $H\choose k$ of those. For $A$ to contain
	all subsets of size $k$ it must be that $|A| \geq \frac{{G\choose
			k}}{{H\choose k}}$. Comparing this to $|X|$,  
	\[\frac{|A|}{|X|} \ge \frac{{G \choose k}}{{H\choose k}{G \choose H}}
	= \frac{(G - H)! (H - k)!}{(G - k)!}.\]    
\end{proof}

\begin{example}
	Let us consider two extremal cases in Proposition \ref{lbn2}. 
	
	Suppose we have $k = 0$. Then \ref{lbn2} gives us
	$\frac{|A|}{|X|} \ge \frac{(G-H)! H!}{G!}$. This corresponds to
	$\frac{1}{|X|}$, as  expected. If instead we have $k = H$, then \ref{lbn2}
	gives $\frac{(G-H)!(H-H)!}{(G-H)!} = 1$. Again this is expected as the only  
	way to have $\norm_2(A)\geq H+1$ is if $A =X$.  
\end{example}

Using Sterling's approximations for factorials 
\[\sqrt{2\pi}\cdot m^{m+\frac{1}{2}} \cdot e^{-m}\leq m!\leq e\cdot 
m^{m+\frac{1}{2}}\cdot e^{-m},\]
we may give the following conclusion to Proposition \ref{lbn2}.  	

\begin{corollary}
	If $\norm_2(A)\geq k+1$, then 
	\[\begin{array}{r}
	\displaystyle 
	\frac{|A|}{|X|} > \frac{2\pi}{e} \sqrt{\frac{(G-H)(H-k)}{G-k}}
	\frac{(G-H)^G(H-k)^H(G-k)^k}{(G-k)^G (G-H)^H(H-k)^k} =\\
	\ \\
	\displaystyle 
	\frac{2\pi}{e}\sqrt{\frac{(G-H)(H-k)}{G-k}} \frac{(2^n-
		1)^G(1-\frac{k}{H})^H(2^n-\frac{k}{H})^k}{(2^n-\frac{k}{H})^G 
		(2^n -1)^H(1-\frac{k}{H})^k} .
	\end{array}\] 
\end{corollary}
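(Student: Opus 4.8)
The plan is to feed the two-sided Stirling bounds directly into the inequality of Proposition~\ref{lbn2}, which already tells us $\frac{|A|}{|X|}\ge\frac{(G-H)!\,(H-k)!}{(G-k)!}$. Write $p=G-H$, $q=H-k$, $r=G-k$ and record the one arithmetic fact that makes everything work: $p+q=r$. We may assume $k<H$, since when $k=H$ the claimed right‑hand side is $0$ (it carries a factor $\sqrt{H-k}$) while $|A|/|X|\ge 1/|X|>0$ because $A$ is non‑empty; and once $k<H$ we have $p,q,r\ge 1$, so Stirling genuinely applies to each of the three factorials.

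Next I would bound the numerator from \emph{below} and the denominator from \emph{above}: replace $(G-H)!$ and $(H-k)!$ by $\sqrt{2\pi}\,p^{p+1/2}e^{-p}$ and $\sqrt{2\pi}\,q^{q+1/2}e^{-q}$, and replace $(G-k)!$ by $e\,r^{r+1/2}e^{-r}$. This gives
\[
\frac{|A|}{|X|}\ \ge\ \frac{\sqrt{2\pi}\,p^{p+1/2}e^{-p}\cdot\sqrt{2\pi}\,q^{q+1/2}e^{-q}}{e\,r^{r+1/2}e^{-r}}\ =\ \frac{2\pi}{e}\cdot\frac{e^{-(p+q)}}{e^{-r}}\cdot\frac{p^{p+1/2}\,q^{q+1/2}}{r^{r+1/2}}.
\]
Since $p+q=r$, the exponential factor is exactly $1$; and $\frac{p^{p+1/2}q^{q+1/2}}{r^{r+1/2}}=\sqrt{\tfrac{pq}{r}}\cdot\frac{p^{p}q^{q}}{r^{r}}$, which produces the factor $\sqrt{\tfrac{(G-H)(H-k)}{G-k}}$ together with the power product $\frac{(G-H)^{G-H}(H-k)^{H-k}}{(G-k)^{G-k}}$. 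The inequality is strict: in Robbins' sharpening of Stirling's formula the lower estimate $\sqrt{2\pi}\,m^{m+1/2}e^{-m}<m!$ is a strict inequality, and one strict step in the chain suffices, which accounts for the ``$>$'' in the statement.

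It then remains to rewrite the power product in the two displayed forms. For the first form, split each power using $p=G-H$, $q=H-k$, $r=G-k$: $(G-H)^{G-H}=(G-H)^{G}/(G-H)^{H}$, $(H-k)^{H-k}=(H-k)^{H}/(H-k)^{k}$ and $(G-k)^{-(G-k)}=(G-k)^{k}/(G-k)^{G}$; collecting these yields precisely $\frac{(G-H)^{G}(H-k)^{H}(G-k)^{k}}{(G-k)^{G}(G-H)^{H}(H-k)^{k}}$. For the second equality, recall $H=G/2^{n}$, so $G=2^{n}H$, hence $G-H=(2^{n}-1)H$, $G-k=(2^{n}-\tfrac{k}{H})H$, $H-k=(1-\tfrac{k}{H})H$; substituting, the total exponent of $H$ in the numerator is $G+H+k$ and the same in the denominator, so every power of $H$ cancels and one is left with the asserted expression in $2^{n}$ and $\tfrac{k}{H}$ (the square‑root factor being left untouched).

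I do not expect a serious obstacle here — the argument is essentially bookkeeping. The points that require care are getting the \emph{directions} of the Stirling estimates right (lower bound on the two numerator factorials, upper bound on the single denominator factorial), checking the exponential cancellation through $p+q=r$, and separating off the degenerate case $k=H$ so that Stirling is only invoked on strictly positive integers.
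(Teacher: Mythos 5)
Your argument is correct and is exactly the route the paper takes: it feeds the two-sided Stirling bounds into the inequality of Proposition~\ref{lbn2}, cancels the exponentials via $(G-H)+(H-k)=G-k$, and then rewrites the resulting power product in the two displayed forms (the paper leaves this computation implicit, while you also sensibly isolate the degenerate case $k=H$ and justify the strictness of the inequality). No gaps.
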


\begin{proposition}
	[Upper bound when given a value of the norm]
	\label{ubn2}
	Given a non-empty set $A\subseteq X$ such that $\norm_2(A)\leq k$, then  
	\[\frac{|A|}{|X|} \leq 1 - \prod\limits_{i = 0}^{k-1} \frac{H - i}{G -
		i}. \]
	Moreover, there is a set $A^*$ with $\norm_2(A^*)=k$ for which the equality
	in the above formula holds.
\end{proposition}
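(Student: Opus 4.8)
I would prove the inequality as a ``dual'' of Proposition~\ref{lbn2}: there the hypothesis forced $A$ to contain \emph{all} size-$k$ sets, whereas here $\norm_2(A)\le k$ hands us \emph{one} set $x$ of size $\le k$ that no member of $A$ contains, after which everything is counting. First dispose of the trivial range: if $k\ge H+1$, the product $\prod_{i=0}^{k-1}\frac{H-i}{G-i}$ contains the factor $\frac{H-H}{G-H}=0$, so the asserted bound is merely $\frac{|A|}{|X|}\le 1$ and holds automatically. So assume $k\le H$, and set $j:=\norm_2(A)$; since $A\ne\emptyset$ and $\emptyset\subseteq a$ for every $a$, we have $1\le j\le k\le H$. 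By the definition of $\norm_2$, choose $x\subseteq G$ with $|x|=j$ and $x\not\subseteq a$ for all $a\in A$. Then $A\subseteq\{a\in X: x\not\subseteq a\}$, and since precisely $\binom{G-j}{H-j}$ of the size-$H$ subsets of $G$ contain the fixed $j$-set $x$, we get $|A|\le\binom{G}{H}-\binom{G-j}{H-j}$. Dividing by $|X|=\binom{G}{H}$ and simplifying the binomial ratio (the same elementary cancellation used in Proposition~\ref{lbn2}) gives $\frac{|A|}{|X|}\le 1-\prod_{i=0}^{j-1}\frac{H-i}{G-i}$.

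Next I would pass from $j$ to $k$. Because $n\ge 1$ we have $H=G/2^n\le G/2<G$, so every factor $\frac{H-i}{G-i}$ with $0\le i\le k-1$ lies in $(0,1]$; appending the extra factors for $j\le i\le k-1$ can therefore only decrease the product, i.e.\ $\prod_{i=0}^{j-1}\frac{H-i}{G-i}\ge\prod_{i=0}^{k-1}\frac{H-i}{G-i}$, and subtracting the smaller quantity from $1$ yields the weaker (larger) bound claimed. The one thing to watch is the direction of this last step: a longer product is smaller, hence $1$ minus it is larger, which is exactly why comparing against the length-$k$ product is legitimate.

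For the ``moreover'' clause I would exhibit the extremal set outright. Fix any $x_0\subseteq G$ with $|x_0|=k$ (possible since $k\le H+1\le G$) and set $A^*=\{a\in X: x_0\not\subseteq a\}$. Reversing the count above gives $|A^*|=|X|-\binom{G-k}{H-k}$, so $\frac{|A^*|}{|X|}$ equals the right-hand side of the inequality exactly. It remains to check $\norm_2(A^*)=k$: the set $x_0$ witnesses $\norm_2(A^*)\le k$, so the content is the reverse bound. Given any $y\subseteq G$ with $|y|\le k-1$, I would choose $t\in x_0\setminus y$ (possible since $|y|<|x_0|$) and, using $H<G$ again, enlarge $y$ to a size-$H$ set $a$ omitting $t$; then $a\in X$, $y\subseteq a$, and $t\in x_0\setminus a$ forces $x_0\not\subseteq a$, so $a\in A^*$. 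Hence no set of size $\le k-1$ can witness a value of $\norm_2(A^*)$ below $k$, giving $\norm_2(A^*)\ge k$.

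The whole argument is essentially a binomial-coefficient computation; the only genuine care-points are the ``backwards'' monotonicity comparison in the second paragraph and the verification in the third that $A^*$ truly attains $\norm_2=k$ rather than something smaller — the latter being the step most likely to hide an off-by-one error, so I would write out the extension of $y$ to $a$ carefully.
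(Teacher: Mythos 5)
Your argument is correct and is essentially the paper's ``alternative proof'' of Proposition~\ref{ubn2}: pick a witness $x$ for the norm and count the ${G-j \choose H-j}$ members of $X$ that contain it. You are in fact somewhat more careful than the paper --- you pass from $\norm_2(A)=j$ to the stated bound at $k$ via the monotonicity of the product, and you actually verify that $A^*$ attains norm exactly $k$ (the reverse inequality $\norm_2(A^*)\geq k$), which the paper only asserts; the paper's primary proof instead runs inclusion--exclusion over the sets $A_a=\{x\in X: a\notin x\}$ and invokes Lemma~\ref{inlem2}, which exhibits more structure but is not needed for the bound itself.
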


\begin{proof}
	First lets consider the biggest set $A$ such that
	$\norm_2(A)=1$. For any $a \in G$, we let 
	\[A_a \stackrel{\rm def}{=} \{x \in X: a\notin x \}.\]
	This is the biggest a set with a norm of 1 can be, as adding any other
	element from $X$, which must contain $a$, would increase the norm to 2.  It
	is also clear that $|A_a| = {G-1 \choose H}$.
	
	It is worth noticing that for any $B\subset G$ with  $|B| \leq H$, we have  
	\[|\bigcap\limits_{a\in B} A_a| = {G-|B| \choose H}.\] 
	
	If we have $\norm_2(A)\leq k$, then we know for some $B\subset G$ with
	$|B| = k$, $A \subseteq  \bigcup\limits_{a\in B} A_a$. Without loss of
	generality $B=\{1,2,\ldots, k\}$, and we can assume $A\subseteq
	\bigcup\limits_{a=1}^k A_a$, and consequently $|A| \leq
	|\bigcup\limits_{a=1}^k A_a|$.  
	By the general inclusion-exclusion principle, we know 
	\[\begin{array}{r}
	\displaystyle 
	|\bigcup\limits_{a=1}^k A_a| = \sum\limits_{a=1}^k |A_a| -
	\sum\limits_{1\leq a_1 < a_2 \leq k} |A_{a_1} \cap A_{a_2}|
	+\ldots+\\
	\ \\
	\displaystyle 
	(-1)^{i-1} \sum\limits_{1\le a_1 <\ldots<a_i\le k}|A_{a_1}\cap
	\ldots\cap A_{a_i}|+\ldots +(-1)^{k-1}|A_1\cap\ldots\cap A_k|
	\end{array}\]
	But we know $|A_{a_1} \cap A_{a_2} \cap\ldots\cap A_{a_i}| = {G-i
		\choose H}$, so we have 
	\[ |\bigcup\limits_{a=1}^k A_a| = \sum\limits_{i=1}^k(-1)^{i-1}
	{k\choose i}{G-i\choose H} .\] 
	It follows from Lemma \ref{inlem2} that 
	\[ |A|\leq |\bigcup\limits_{a=1}^k A_a| = \sum\limits_{i=1}^k {G-i
		\choose H-i+1} .\] 
	One can easily verify this is equivalent to 
	\[ |A| \leq {G\choose H} - { G-k \choose H-k} \]
	and therefore 
	\[ \frac{|A|}{|X|} \leq  1 - \frac{{G-k \choose H-k}}{{G\choose H}}.\]
	Expanding this using the definition of the choose function, 
	\[ \frac{|A|}{|X|} \leq 1 - \frac{(G-k)!}{G!} \frac{H! (G-H)!}{(H-k)!
		(G - k - (H - k))!} = 1 - \frac{(G - k)!}{G!}\frac{H!}{(H-k)!}\] 
	which then simplifies to
	\[\frac{|A|}{|X|} \leq 1 - \prod\limits_{i = 0}^{k-1} \frac{H - i}{G - i}. \]  
	Concerning the ``Moreover'' part note that
	$\norm_2\big(\bigcup\limits_{a=1}^k A_a\big)=k$. 
\end{proof}

While the above proof demonstrates more of the structure of the norm, there
is a simpler way to arrive at the same result. 

\begin{proof}[Alternative proof of \ref{ubn2}]
	If we have $\norm_2(A) = k$, then we know that there is an $x$ such that
	$|x| = k$ and $(\forall a \in A)(x \nsubseteq a)$. Without loss of
	generality, we will assume that $x = k$. Every set in
	$K = \{ k \cup y : y \subseteq G\bs k \land |y| = H - k\} $ includes
	$k$. We also know that $|K| = {G - k \choose H -k}$. Subtracting these
	sets from $X$, we get
	\[|A| \leq {G \choose H} - {G - k \choose H - k}\]
	Like before, this then becomes
	\[\frac{|A|}{|X|} \leq 1 - \prod\limits_{i = 0}^{k-1} \frac{H - i}{G - i}. \] 
\end{proof}

\begin{remark}
	In \cite[Lemma 7.2.8]{Baju95} the authors claimed that (using our notation)
	\[\frac{|A|}{|X|}\geq 1-\frac{1}{2^{kn}}\quad\mbox{ whenever } \norm_2(A)
	=k+1.\] 
	However, as we showed in Proposition \ref{ubn2} there is a set $A^*$ with
	$\norm_2(A^*)=k$ and $\frac{|A^*|}{|X|}=1-\prod\limits_{i=0}^{k-1}
	\frac{H-i}{G-i}$.  If $i>0$, then $H-i<H-\frac{i}{2^n}$ and hence
	$H-i<(G-i)\cdot \frac{1}{2^n}$. Consequently, for $k>1$ we have
	$\prod\limits_{i=0}^{k-1} \frac{H-i}{G-i}<\frac{1}{2^{nk}}$. Thus
	\cite[Lemma 7.2.8]{Baju95} is not true. However, in the application we may
	use the fact that if $G, H \gg k$, 
	\[ \frac{|A|}{|X|} \leq 1 - \prod\limits_{i = 0}^{k-1} \frac{H - i}{G-
		i} \approx 1 - \prod\limits_{i = 0}^{k-1} \frac{H}{G} \]  
	Using the identity $H = \frac{G}{2^n}$, we get $\frac{|A|}{|X|}$ is
	less than or approximately equal to $1 - 2^{-kn}$.  
\end{remark}

\section{Graph Coloring Norm}

We examine the properties of one of the forcing norms used in Ros{\l}anowski
and Shelah \cite[\S 2.4]{RoSh:470}. In the process of examining this norm, we
found that it bears a strong relationship with the coloring number of
a hyper-graph. We then use this relationship to determine the value of the norm when applied to sets with various properties.

\subsection{Definition and Basis Properties}
Taking $N$ to be any natural number, we define the set $P_N = \{ a \in
\pow(N) : |a|\geq 2 \}$. 

\begin{definition}
	Given an arbitrary set $A\subseteq P_N$, we define the graph coloring norm,
	$\norm_3^N: \pow(P_N) \rightarrow \omega$ as follows: 
	\begin{enumerate}
		\item $\norm_3^N(A) \geq 0$ always
		\item $\norm_3^N(A) \geq 1$ if and only if $A \neq \emptyset$.
		\item $\norm_3^N(A) \geq n+1$ when for every $z \subseteq N$ either
		$\norm_3^N(A\restrict z) \geq n$ or $\norm_3^N(A\restrict (N\bs z)) \geq n$. 
	\end{enumerate}
	(Remember, $A\restrict z = \{ a\in A : a \subseteq z \}$.)
\end{definition}

When $N$ is understood, we may simply abbreviate $\norm_3^N(A)$ as
$\norm_3(A)$. We may also abbreviate $P_N$ as $P$. For the rest of this
section, unless stated otherwise, we will assume we are working with a
fixed $N$ .

\begin{theorem}
	\label{subsets}
	For any sets $A\subseteq B\subseteq P$, the inequality $\norm_3(A) \geq n$
	implies $\norm_3(B) \geq n$.
\end{theorem}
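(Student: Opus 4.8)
The plan is to prove the statement by induction on $n$, exploiting the recursive structure of the definition of $\norm_3$. The key elementary fact I will need is that restriction is monotone: if $A\subseteq B\subseteq P$ then $A\restrict z\subseteq B\restrict z$ for every $z\subseteq N$. This is immediate from the definition $A\restrict z=\{a\in A:a\subseteq z\}$, since any $a$ witnessing membership in $A\restrict z$ lies in $A\subseteq B$ and satisfies $a\subseteq z$, hence lies in $B\restrict z$. I would record this observation (or its obvious proof) at the start.

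For the base cases, when $n=0$ there is nothing to prove because $\norm_3(B)\geq 0$ always holds by clause (1). When $n=1$, the hypothesis $\norm_3(A)\geq 1$ gives $A\neq\emptyset$ by clause (2), and since $B\supseteq A$ the set $B$ is nonempty as well, so $\norm_3(B)\geq 1$.

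For the inductive step I assume the implication holds for $n$ and all pairs $A'\subseteq B'\subseteq P$, and suppose $A\subseteq B\subseteq P$ with $\norm_3(A)\geq n+1$. By clause (3), for every $z\subseteq N$ we have $\norm_3(A\restrict z)\geq n$ or $\norm_3(A\restrict(N\bs z))\geq n$. Fixing an arbitrary $z\subseteq N$, consider the first alternative: since $A\restrict z\subseteq B\restrict z$ by the monotonicity of restriction, the inductive hypothesis applied to the pair $A\restrict z\subseteq B\restrict z$ yields $\norm_3(B\restrict z)\geq n$. The second alternative is handled identically with $N\bs z$ in place of $z$. Hence for every $z\subseteq N$ either $\norm_3(B\restrict z)\geq n$ or $\norm_3(B\restrict(N\bs z))\geq n$, which is exactly the condition in clause (3) certifying $\norm_3(B)\geq n+1$. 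This closes the induction.

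I do not expect a genuine obstacle here; the only points requiring care are stating and using the restriction-monotonicity fact and organizing the induction on the parameter $n$ (rather than on $N$), so that the restricted instances $A\restrict z\subseteq B\restrict z$ fall under the inductive hypothesis even though they live over the same ground set $N$.
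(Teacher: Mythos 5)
Your proposal is correct and follows essentially the same route as the paper's proof: induction on $n$, with the base cases $n=0,1$ handled from clauses (1) and (2), and the inductive step using the monotonicity $A\restrict z\subseteq B\restrict z$ together with the inductive hypothesis applied to the restricted pair. The only cosmetic difference is that you treat both alternatives of clause (3) explicitly where the paper says ``without loss of generality.''
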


\begin{proof}
	Towards induction, when $n = 0$, this is true by definition.
	
	When $n = 1$ and $A \subseteq B$,
	$$\norm_3(A) \geq 1 \implies A\neq \emptyset \implies B \neq \emptyset \implies
	\norm_3(B) \geq 1.$$ 
	
	Inductive step: Assume we know that for any natural number $k$, and any sets
	$A\subseteq B \subseteq P$, $\norm_3(A) \geq k \implies \norm_3(B) \geq k$. Let
	$A\subseteq B \subseteq P$ be any sets such that $\norm_3(A) \geq k+1$. Take
	$z \subseteq N$, without loss of generality, we can assume
	$\norm_3(A \restrict z) \geq k$. We also know that
	$A\restrict z \subseteq B \restrict z$. By the induction hypothesis, we have
	$\norm_3(B\restrict z) \geq k$. Finally, as $z$ is any subset of $N$, we have
	$\norm_3(B) \geq k+1$.
\end{proof}

Our definition of $\norm_3(A) \geq n$ does not explicitly imply transitivity,
but due to theorem \ref{subsets}, it is easy to see the following. 

\begin{corollary}
	For any set $A\subseteq P$, we have 
	\[\norm_3(A) \geq n+1 \implies \norm_3(A) \geq n.\]
\end{corollary}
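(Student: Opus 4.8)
The plan is to reduce the claim to Theorem~\ref{subsets}, exploiting that both $A\restrict z$ and $A\restrict(N\bs z)$ are subsets of $A$. First I would dispose of the degenerate case $n=0$: by clause~(1) of the definition of $\norm_3$ we have $\norm_3(A)\geq 0$ for every $A\subseteq P$, so the implication $\norm_3(A)\geq 1\implies\norm_3(A)\geq 0$ holds vacuously and nothing needs to be checked.

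For $n\geq 1$ I would unwind clause~(3) of the definition. Suppose $\norm_3(A)\geq n+1$. By clause~(3) this means that for \emph{every} $z\subseteq N$ at least one of $\norm_3(A\restrict z)\geq n$ or $\norm_3(A\restrict(N\bs z))\geq n$ holds; in particular it holds for one fixed choice of $z$. Whichever disjunct is true, the corresponding set ($A\restrict z$ or $A\restrict(N\bs z)$) is by definition of $\restrict$ a subset of $A$, hence also a subset of $P$. Applying Theorem~\ref{subsets} with that restriction playing the role of ``$A$'' and $A$ itself playing the role of ``$B$'' yields $\norm_3(A)\geq n$, as desired.

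The only point calling for a little care — and the closest thing to an obstacle — is bookkeeping at the boundary: one must keep the case $n=0$ separate so that clause~(3) is genuinely the operative clause for the hypothesis $\norm_3(A)\geq n+1$, and one must be sure the invocation of clause~(3) is not circular. A fully self-contained variant that sidesteps Theorem~\ref{subsets} is also available: take $z=N$, so that $A\restrict N=A$ while $A\restrict(N\bs N)=A\restrict\emptyset=\emptyset$ because every member of $P_N$ has at least two elements; since $\norm_3(\emptyset)<1\leq n$ for $n\geq 1$, clause~(3) forces the first disjunct, i.e.\ $\norm_3(A)=\norm_3(A\restrict N)\geq n$. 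I would present the first route, since it makes transparent why the corollary is ``easy to see'' once Theorem~\ref{subsets} is available.
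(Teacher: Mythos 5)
Your proposal is correct and follows essentially the same route as the paper: unwind clause~(3) of the definition for a single $z$, observe that $A\restrict z$ and $A\restrict(N\bs z)$ are subsets of $A$, and apply Theorem~\ref{subsets}. Your explicit handling of the $n=0$ boundary case is a small tidiness improvement over the paper's one-line argument, but the substance is identical.
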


\begin{proof}
	This follows from the fact $A\restrict z$ is a subset of $A$, and to have
	$\norm_3(A)\geq n+1$ we must have $\norm_3(A\restrict z) \geq n$ (or
	$\norm_3(A\restrict (N\bs z)) \geq n$).  
\end{proof}

\begin{observation}
	For any set $A\subseteq P$, it is not the case that $\norm_3(A) \geq N+1$.
\end{observation}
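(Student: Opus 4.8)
The plan is to show by induction on $n$ that $\norm_3(A) \geq n$ forces $n \leq N$ in a suitable sense; equivalently, to exhibit, for each $A \subseteq P$, a decreasing sequence of subsets of $N$ whose lengths are controlled by the norm. The key observation is that the clause ``$\norm_3(A) \geq n+1$'' demands, \emph{for every} $z \subseteq N$, that one of $A\restrict z$, $A\restrict(N\bs z)$ have norm $\geq n$. I will feed this definition particular choices of $z$ — most usefully $z$ a single element or the complement of a single element — to strip $N$ down one point at a time.

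Concretely, first I would prove the auxiliary claim: if $w \subseteq N$ and $\norm_3(A\restrict w) \geq n+1$, then there is some $\ell \in w$ with $\norm_3\big(A\restrict(w\bs\{\ell\})\big)\geq n$. To see this, apply the defining clause (3) for $A\restrict w$ — note that by definition $(A\restrict w)\restrict z = A\restrict(w\cap z)$ — with $z = w \bs \{\ell\}$ for an arbitrary $\ell \in w$; this yields that either $\norm_3\big(A\restrict(w\bs\{\ell\})\big)\geq n$ (done) or $\norm_3\big(A\restrict(w\cap(N\bs z))\big) = \norm_3\big(A\restrict\{\ell\}\big)\geq n$. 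But $A\restrict\{\ell\}$ is empty, since every element of $P$ has size $\geq 2$ and hence cannot be a subset of a singleton; so $\norm_3\big(A\restrict\{\ell\}\big) \leq 1$. Thus if $n \geq 1$ the second alternative fails and we get the first. (If $w = \emptyset$ the hypothesis $\norm_3(A\restrict\emptyset)\geq n+1\geq 2$ already fails since $A\restrict\emptyset = \emptyset$, so $w$ is nonempty and $\ell$ exists.)

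Now suppose toward a contradiction that $\norm_3(A) \geq N+1$. Iterating the auxiliary claim, starting from $w_0 = N$ with $\norm_3(A\restrict N) = \norm_3(A) \geq N+1$, I obtain a strictly decreasing chain $N = w_0 \supsetneq w_1 \supsetneq \cdots \supsetneq w_N$ with $|w_i| = N - i$ and $\norm_3(A\restrict w_i) \geq N+1-i$. At step $i = N-1$ we have $|w_{N-1}| = 1$ and $\norm_3(A\restrict w_{N-1}) \geq 2$. But $w_{N-1}$ is a singleton, so $A\restrict w_{N-1} = \emptyset$ (again because members of $P$ have at least two elements), forcing $\norm_3(A\restrict w_{N-1}) \leq 1$, a contradiction. (One could also push to $w_N = \emptyset$ with $\norm_3(A\restrict w_N)\geq 1$, i.e. $A\restrict\emptyset \neq \emptyset$, equally absurd.)

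I expect the main obstacle — really the only subtle point — to be handling the ``either/or'' in clause (3) cleanly: one must argue that the ``wrong'' branch is impossible, and this relies precisely on the fact that $P_N$ contains no sets of size $\leq 1$, so that restricting $A$ to any set of size $\leq 1$ yields the empty set, whose norm is at most $1$. Once that is pinned down, the induction is routine bookkeeping on the sizes $|w_i|$. A cosmetic alternative is to phrase the induction directly as the statement ``for every $w \subseteq N$ and every $A$, $\norm_3(A\restrict w) \leq |w|$,'' proved by induction on $|w|$ using clause (3) with $z = w\bs\{\ell\}$; the base case $|w|\leq 1$ gives $\norm_3(\emptyset)\leq 1$, and the Observation is the instance $w = N$ together with $\norm_3(A) = \norm_3(A\restrict N)$.
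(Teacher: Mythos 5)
Your proof is correct and is exactly the argument the paper intends: its one-line proof (``consider what happens as we remove one element from $N$ at a time'') is precisely your iteration of clause (3) with $z=w\bs\{\ell\}$, using that $A\restrict\{\ell\}=\emptyset$ because members of $P_N$ have size at least $2$. One cosmetic point: since $A\restrict\{\ell\}$ is empty, clause (2) gives $\norm_3(A\restrict\{\ell\})=0$ rather than merely $\leq 1$, and it is this equality that you actually need to rule out the second alternative in the case $n=1$.
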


\begin{proof}
	To see this, consider what happens as we remove one element from $N$ at a
	time.   
\end{proof}

Note that a better bound will be found later, and we are merely offering
this bound to argue that $\norm_3(A)$ is well--defined. 

\begin{definition}
	For an arbitrary set $A\subseteq P$, we say $\norm_3(A) = n$ if $\norm_3(A) \geq
	n$, but not $\norm_3(A)\geq n+1$. 
\end{definition}

\begin{corollary}
	For any set $A\subseteq P$, $\norm_3(A)$ is well defined.
\end{corollary}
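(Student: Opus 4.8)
The plan is to read ``well defined'' as the assertion that for each $A\subseteq P$ there is one and only one natural number $n$ satisfying the condition of the preceding definition, namely $\norm_3(A)\geq n$ together with $\neg\bigl(\norm_3(A)\geq n+1\bigr)$. To organize the argument I would introduce the set
\[ S_A = \{\, n\in\omega : \norm_3(A)\geq n \,\}, \]
and prove two things about it: that it is a nonempty initial segment of $\omega$, and that it is finite. Nonemptiness is immediate since $\norm_3(A)\geq 0$ by clause (1) of the definition. That $S_A$ is an initial segment of $\omega$ is essentially the content of the Corollary stating $\norm_3(A)\geq n+1\implies\norm_3(A)\geq n$: if $n\in S_A$ and $m<n$, then applying that corollary $n-m$ times yields $m\in S_A$. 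Finiteness follows from the Observation that $\norm_3(A)\geq N+1$ fails, so $N+1\notin S_A$; combined with $S_A$ being an initial segment this gives $S_A\subseteq\{0,1,\ldots,N\}$.

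Having established this, $S_A$ is a nonempty finite subset of $\omega$, hence has a greatest element; call it $n_0$. Then $n_0\in S_A$ gives $\norm_3(A)\geq n_0$, and $n_0+1\notin S_A$ gives $\neg\bigl(\norm_3(A)\geq n_0+1\bigr)$, so $n_0$ witnesses existence. For uniqueness, suppose $m$ also satisfies $\norm_3(A)\geq m$ and $\neg\bigl(\norm_3(A)\geq m+1\bigr)$. Then $m\in S_A$, so $m\leq n_0$; and if $m<n_0$ then $m+1\leq n_0$, so by downward closure of $S_A$ and $n_0\in S_A$ we would get $m+1\in S_A$, i.e. $\norm_3(A)\geq m+1$, contradicting the choice of $m$. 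Hence $m=n_0$, which is exactly what ``well defined'' demands.

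I do not expect a genuine obstacle here; the only step that deserves a moment's care is the passage from the single-step Corollary to the statement that $S_A$ is downward closed, which requires an (entirely routine) induction on the difference $n-m$. Everything else reduces to the elementary fact that a nonempty subset of $\omega$ bounded above by $N$ has a maximum.
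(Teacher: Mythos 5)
Your proof is correct and is exactly the argument the paper intends (and leaves implicit): the preceding corollary gives downward closure of $\{n:\norm_3(A)\geq n\}$, the observation that $\norm_3(A)\geq N+1$ fails gives boundedness, and the maximum of this nonempty finite initial segment is the unique value of $\norm_3(A)$. Your write-up simply makes explicit the details the paper omits, so nothing further is needed.
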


\begin{definition}
	Given the sets $a, p\subseteq N$, and $n\in N$ we say
	\begin{enumerate}
		\item $n$ is a vertex,
		\item $a$ is an edge if $|a| = 2$,
		\item $a$ is a polygon if $|a| \geq 3$,
		\item $a$ is an $n$-gon if $|a| = n$, and
		\item $a$ is an edge of $p$ if $a$ is an edge and $a\subseteq p$.
	\end{enumerate}
\end{definition}

\begin{example} 
	\label{examples}
	\begin{enumerate}
		\item Consider the set $A = \{ \{0,1\} \}$. It is clear that $A$ is
		non-empty, giving us $\norm_3(A) \geq 1$. The set $z = \{ 0 \}$, however,
		gives us $A\restrict z = A \restrict (N\bs z) = \emptyset$. Therefore we
		have $\norm_3(A) = 1$. 
		\item Consider the set $A = \{ \{ 0,1\}, \{ 1,2\}, \{2,0\}  \}$. Not only is
		$A$ non-empty, but for any $z\subseteq N$ either $A\restrict z$ or
		$A\restrict (N\bs z)$ in non-empty. This is because out of $0,1,$ and $2$,
		at least two of them must be in either $z$ or $N\bs z$. If we take $z = \{
		0,1 \}$, we get $A\restrict z = \{ \{0,1\} \}$, and we already know
		$\norm_3(A\restrict z) = 1$. Therefore we have $\norm_3(A) = 2$. 
		\item Consider the set $A = \{ \{0,1\}, \{1,2\}, \{2,3\}, \{3,0\} \}$. The
		set $z = \{ 0,2 \}$ gives $A\restrict z = A\restrict (N\bs z) =
		\emptyset$. This gives us $\norm_3(A) = 1$. Notice that although this set
		contains more element than the set in example 2, we get a smaller norm. 
		\item Consider the set $A = \{ \{0,1,2\} \}$. The set $z = \{ 0 \}$ gives us
		$\norm_3(A) = 1$. Notice that the set of all the edges of this triangle has a
		larger norm that the set consisting of only the triangle. 
	\end{enumerate}
\end{example}

\begin{definition}
	\begin{enumerate}
		\item Given an arbitrary set $A\subseteq P$, and a partition of $N$, $V_0,
		V_1, \ldots, V_{n-1}$, we say {\em $A$ is split by the sets $V_0, \ldots,
			V_{n-1}$\/}   if 
		$$(\forall k \in n)(A\restrict V_k = \emptyset).$$ 
		\item Given an arbitrary set $A\subseteq P$, we say {\em $A$ can be split by $n$
			sets\/}  if there exists a partition of $N$ into $n$ sets $V_0, \ldots,
		V_{n-1}$ such that $A$ is split by $V_0, \ldots, V_{n-1}$.    
	\end{enumerate}
\end{definition}

\begin{theorem}
	Suppose we have an arbitrary set $A\subseteq P$ such that $A$ can be split
	by $2^n$ sets, then it must be that $\norm_3(A) \leq n$. 
\end{theorem}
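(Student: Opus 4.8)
The plan is to prove, by induction on $n$, the statement $(\ast_n)$: for every $A\subseteq P$, if $A$ can be split by $2^n$ sets then $\norm_3(A)\leq n$; the theorem is then exactly $(\ast_n)$ for the $n$ appearing in its statement. For the base case $n=0$, being split by $2^0=1$ set means the one‑block partition $\{N\}$ witnesses it, so $\emptyset=A\restrict N=A$ (every member of $A$ is a subset of $N$), whence $\norm_3(A)=0$.

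For the inductive step, assume $(\ast_n)$ and let $A\subseteq P$ be split by $2^{n+1}$ sets $V_0,\dots,V_{2^{n+1}-1}$. Put $z=\bigcup_{k<2^n}V_k$, so $N\bs z=\bigcup_{2^n\leq k<2^{n+1}}V_k$. I claim $A\restrict z$ is split by the $2^n$ sets $V_0\cup(N\bs z),\,V_1,\dots,V_{2^n-1}$, which do partition $N$: for $1\leq k<2^n$ we have $V_k\subseteq z$, so $(A\restrict z)\restrict V_k=A\restrict V_k=\emptyset$, and any $a\in A\restrict z$ with $a\subseteq V_0\cup(N\bs z)$ satisfies $a\subseteq z\cap(V_0\cup(N\bs z))=V_0$, hence $a\in A\restrict V_0=\emptyset$. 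By $(\ast_n)$, $\norm_3(A\restrict z)\leq n$, i.e.\ $\norm_3(A\restrict z)\not\geq n+1$. Symmetrically (merging $z$ into the block $V_{2^n}$) the set $A\restrict(N\bs z)$ is split by $2^n$ sets, so $\norm_3(A\restrict(N\bs z))\leq n$ as well. Thus there is a single $z\subseteq N$ for which neither $\norm_3(A\restrict z)\geq n+1$ nor $\norm_3(A\restrict(N\bs z))\geq n+1$; by the defining clause for $\norm_3$ this means $\norm_3(A)\not\geq n+2$, i.e.\ $\norm_3(A)\leq n+1$, completing the induction.

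The only points needing care are bookkeeping ones. First, one must check that the merged families are genuinely partitions of \emph{all} of $N$ and that $A$ restricted to $z$ (resp.\ $N\bs z$) really is split by them; this is exactly where the hypothesis $A\restrict V_k=\emptyset$ enters, through the set identity $z\cap(V_0\cup(N\bs z))=V_0$. Second, one must keep straight the difference between the recursively defined relations ``$\norm_3(A)\geq m$'', the value ``$\norm_3(A)=m$'', and the bound ``$\norm_3(A)\leq m$'', so that exhibiting one good witness $z$ is seen to bound the norm from above. I do not expect a genuine obstacle; if desired one can also remark that ``can be split by $m$ sets'' is monotone in $m$ (pad a partition with empty blocks), though the construction here produces exactly $2^n$ blocks on each side, so this is not strictly needed.
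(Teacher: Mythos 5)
Your proof is correct and follows essentially the same route as the paper's: induction on $n$, taking $z$ to be the union of the first $2^n$ blocks and merging $N\bs z$ into one block of the induced partition to invoke the inductive hypothesis on $A\restrict z$ and $A\restrict(N\bs z)$. The only differences are cosmetic (you absorb $N\bs z$ into $V_0$ rather than $V_{2^n-1}$, and you spell out the partition bookkeeping more explicitly than the paper does).
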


\begin{proof}
	Towards induction, when $n = 0$, it must that $A = \emptyset$. Therefore, we
	have $\norm_3(A) = 0$. 
	
	When $n=1$, we can find two sets $z$ and $N\bs z$ such that $A \restrict z =
	A \restrict (N\bs z) = \emptyset$, therefore we have $\norm_3(A) \leq 1$. 
	
	Inductive step: Assume we have that for any set $A$, if $A$ can be split by
	$2^k$ sets then $\norm_3(A) \leq k$. If we have a set $A$ that can be split by
	the sets $V_0, V_1,\ldots, V_{2^{k+1}-1}$, take $z= V_0\cup V_1 \cup\ldots\cup
	V_{2^k - 1}$. It must be that $A\restrict z$ can be split by $V_0, V_1, \ldots,
	V_{2^k-2}, (V_{2^k-1} \cup (N\bs z))$. Similarly, it must be that
	$A\restrict (N\bs z)$ can be split by $2^k$ sets. By the inductive
	hypothesis, we have $\norm_3(A\restrict z) \leq k$ and $\norm_3(A\restrict (N\bs
	z)) \leq k$, and consequently, $\norm_3(A)\leq k+1$. 
\end{proof}

\begin{theorem}
	Let $A\subseteq P$. If $A$ cannot be split by $2^n$ sets, then $\norm_3(A) >
	n$. 
\end{theorem}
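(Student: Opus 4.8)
The plan is to prove the statement by induction on $n$; this is the converse of the preceding theorem, so the two together yield the equivalence $\norm_3(A)\le n$ if and only if $A$ can be split by $2^n$ sets. For the base case $n=0$: if $A$ cannot be split by $2^0=1$ set, then in particular the trivial partition consisting of $N$ alone fails to split $A$, so $A\restrict N\ne\emptyset$; since $A\restrict N=A$ (every element of $P$ is a subset of $N$), we get $A\ne\emptyset$ and hence $\norm_3(A)\ge 1>0$.

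For the inductive step, assume the statement for $k$; the useful form is its contrapositive: for every $B\subseteq P$, if $\norm_3(B)\le k$ then $B$ can be split by $2^k$ sets. Suppose $A\subseteq P$ cannot be split by $2^{k+1}$ sets; I must show $\norm_3(A)\ge k+2$, i.e.\ that for every $z\subseteq N$ at least one of $\norm_3(A\restrict z)\ge k+1$ and $\norm_3(A\restrict(N\bs z))\ge k+1$ holds. Fix $z\subseteq N$ and suppose for contradiction that $\norm_3(A\restrict z)\le k$ and $\norm_3(A\restrict(N\bs z))\le k$. Applying the inductive hypothesis to $A\restrict z$ and to $A\restrict(N\bs z)$ (both are subsets of $P$), we obtain partitions $W_0,\dots,W_{2^k-1}$ and $U_0,\dots,U_{2^k-1}$ of $N$ with $(A\restrict z)\restrict W_j=\emptyset$ and $(A\restrict(N\bs z))\restrict U_j=\emptyset$ for every $j<2^k$.

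Now consider the $2^{k+1}$ sets $W_j\cap z$ and $U_j\cap(N\bs z)$ for $j<2^k$. Since the $W_j$ partition $N$, the sets $W_j\cap z$ partition $z$; likewise the $U_j\cap(N\bs z)$ partition $N\bs z$; together these $2^{k+1}$ sets form a partition of $N$. This partition splits $A$: for each $j$ we have $A\restrict(W_j\cap z)=(A\restrict z)\restrict W_j=\emptyset$ and $A\restrict(U_j\cap(N\bs z))=(A\restrict(N\bs z))\restrict U_j=\emptyset$, using that $a\subseteq W_j\cap z$ iff $a\subseteq z$ and $a\subseteq W_j$. Hence $A$ can be split by $2^{k+1}$ sets, contradicting our hypothesis. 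So for every $z$ the required disjunction holds, giving $\norm_3(A)\ge k+2>k+1$ and completing the induction.

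The only genuine content lies in the gluing step: taking the two $2^k$-fold splittings furnished by the inductive hypothesis, cutting them down to $z$ and to $N\bs z$ respectively, and verifying that the $2^{k+1}$ resulting pieces both form a partition of $N$ and contain no member of $A$. Both verifications are short once one observes $(A\restrict z)\restrict W_j=A\restrict(z\cap W_j)$. The other point needing attention is orienting the inductive hypothesis correctly — it is used in contrapositive form, as a device converting a small norm into a splitting — and, as in the proofs of the two preceding theorems, being content to allow partitions with empty parts.
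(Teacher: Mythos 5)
Your proof is correct and is essentially the paper's argument: the paper isolates the gluing construction as a Claim (for any $z$, at least one of $A\restrict z$, $A\restrict(N\bs z)$ cannot be split by $2^k$ sets, proved by intersecting the two $2^k$-fold partitions with $z$ and $N\bs z$), then applies the inductive hypothesis directly, whereas you inline the same construction using the contrapositive of the inductive hypothesis. The two versions differ only in how the contradiction is packaged, not in substance.
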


\begin{proof}
	Towards induction, when $n = 0$, it must be that $A$ is non-empty, therefore
	$\norm_3(A) \geq 1 > 0$. 
	
	When $n = 1$, for any $z\subseteq N$, either $A\restrict z \neq \emptyset$
	or $A\restrict (N\bs z) \neq \emptyset$. Therefore, $\norm_3(A) \geq 2 > 1$. 
	
	Inductive step: Assume that for any set $A'$,  
	\begin{itemize}
		\item if $A'$ cannot be split by $2^k$ sets, then $\norm_3(A') \geq k+1$.
	\end{itemize}
	Suppose now that a set $A$ cannot be split by $2^{k+1}$ sets.
	\begin{claim}
		\label{cl2}
		If $z$ is any subset of $N$, then either $A\restrict z$ cannot be split by
		$2^k$ sets or $A\restrict (N\bs z)$ cannot be split by $2^k$ sets. 
	\end{claim}
	
	\begin{proof}
		Suppose towards contradiction that both the sets $A\restrict z$ and
		$A\restrict (N\bs z)$ can be split by $2^k$ sets. Let $V_0, V_1,\ldots,
		V_{2^k-1}$ be the sets that split $A\restrict z$, and let $V_{2^k}, V_{2^k
			+1},\ldots, V_{2^{k+1}-1}$ be the sets that split $A\restrict (N\bs z)$. Then
		$A$ is split by 
		\[V_0\cap z, V_1\cap z,\ldots, V_{2^k -1}\cap z, V_{2^k}\cap(N\bs z),
		V_{2^k+1}\cap (N\bs z), \ldots, V_{2^{k+1}-1}\cap (N\bs z),\]
		but this contradicts that $A$ cannot be split by $2^{k+1}$ sets. 
	\end{proof}
	
	Suppose $z\subseteq N$. By Claim \ref{cl2}, without loss of generality, we
	may assume that $A\restrict z$ cannot be split by $2^k$ sets. Then by the
	inductive hypothesis, we get $\norm_3(A\restrict z) \geq k +1$. As $z$ can be
	any subset of $N$, we have $\norm_3(A) \geq k+2$.  
\end{proof}

\begin{corollary}
	For any set $A\subseteq P$, if $A$ can be split by $c$ sets, but not by
	$c-1$ sets, and for some $n$ we have $2^n < c \leq 2^{n+1}$, then $\norm_3(A) =
	n+1$. 
\end{corollary}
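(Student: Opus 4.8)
The plan is to derive this corollary directly from the two preceding theorems, which bracket $\norm_3(A)$ from both sides in terms of how many sets $A$ can be split by. Suppose $A$ can be split by $c$ sets but not by $c-1$ sets, and fix $n$ with $2^n < c \leq 2^{n+1}$. First I would use the lower bound: since $2^n < c$, the set $A$ cannot be split by $2^n$ sets (it cannot even be split by $c-1 \geq 2^n$ sets, and splitting by fewer sets is only harder), so the theorem ``if $A$ cannot be split by $2^n$ sets, then $\norm_3(A) > n$'' gives $\norm_3(A) \geq n+1$.

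Next I would use the upper bound: since $c \leq 2^{n+1}$, the set $A$ can be split by $2^{n+1}$ sets (if it can be split by $c$ sets, it can be split by any larger number of sets, since we may take the extra blocks of the partition to be empty). Hence the theorem ``if $A$ can be split by $2^n$ sets, then $\norm_3(A) \leq n$'', applied at $n+1$ in place of $n$, yields $\norm_3(A) \leq n+1$. Combining the two inequalities gives $\norm_3(A) = n+1$, as claimed.

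The only point that needs a word of care — and it is the mild obstacle here — is the monotonicity of the ``can be split by $m$ sets'' predicate in $m$: I should note explicitly that being splittable by $c$ sets implies being splittable by $c+1$ sets (pad the partition with an empty block; the condition $A \restrict V_k = \emptyset$ is trivially satisfied by $V_k = \emptyset$), so splittability is upward closed in the number of parts. With that remark, both the ``cannot split by $2^n$'' and ``can split by $2^{n+1}$'' conclusions follow immediately from the hypotheses $2^n < c$ and $c \leq 2^{n+1}$, and no further computation is required.
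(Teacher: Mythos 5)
Your proposal is correct and is precisely the intended derivation: the paper states this as an immediate corollary of the two preceding theorems (splittable by $2^{n+1}$ sets gives $\norm_3(A)\leq n+1$; not splittable by $2^n$ sets gives $\norm_3(A)\geq n+1$) and offers no further proof. Your explicit remark that splittability is upward closed in the number of parts (pad with empty blocks, for which $A\restrict V_k=\emptyset$ holds trivially) is a worthwhile addition that the paper leaves tacit.
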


\begin{corollary}
	If we have an arbitrary set $A\subseteq P_N$, we know that $A\subseteq
	P_{N+1}$, and $\norm_3^N(A) = \norm_3^{N+1}(A)$. 
\end{corollary}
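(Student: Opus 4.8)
The plan is to first dispose of the trivial inclusion $A\subseteq P_{N+1}$ --- every $a\in A$ has $a\subseteq N\subseteq N+1$ and $|a|\geq 2$, so $a\in P_{N+1}$ --- and then to establish the equality of norms by induction on $n$, proving that for \emph{every} $A\subseteq P_N$ we have
\[\norm_3^N(A)\geq n\quad\Longleftrightarrow\quad \norm_3^{N+1}(A)\geq n.\]
(The quantifier over $A$ stays inside the induction, since the inductive step refers to restrictions of $A$, which again lie in $P_N$.) Granting this for all $n$, the definition of $\norm_3(A)=n$ gives $\norm_3^N(A)=\norm_3^{N+1}(A)$ at once.

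The base cases are immediate from the definition: $\norm_3^N(A)\geq 0$ and $\norm_3^{N+1}(A)\geq 0$ always hold, and $\norm_3^N(A)\geq 1\iff A\neq\emptyset\iff\norm_3^{N+1}(A)\geq 1$. For the inductive step, the key observation is that every $a\in A$ satisfies $a\subseteq N$, hence $N\notin a$; therefore for any $z\subseteq N+1$ we have $A\restrict z=A\restrict(z\cap N)$ and $A\restrict\big((N+1)\bs z\big)=A\restrict(N\bs z)$. Unwinding $\norm_3^{N+1}(A)\geq n+1$: it says that for every $z\subseteq N+1$, either $\norm_3^{N+1}(A\restrict z)\geq n$ or $\norm_3^{N+1}\big(A\restrict((N+1)\bs z)\big)\geq n$; by the observation this is the same as saying that for every $z\subseteq N+1$, either $\norm_3^{N+1}(A\restrict(z\cap N))\geq n$ or $\norm_3^{N+1}(A\restrict(N\bs z))\geq n$. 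As $z$ ranges over subsets of $N+1$, the set $w=z\cap N$ ranges over all subsets of $N$ while $N\bs z=N\bs w$, so this condition is equivalent to: for every $w\subseteq N$, either $\norm_3^{N+1}(A\restrict w)\geq n$ or $\norm_3^{N+1}(A\restrict(N\bs w))\geq n$. Since $A\restrict w$ and $A\restrict(N\bs w)$ are subsets of $P_N$, the inductive hypothesis converts both $\norm_3^{N+1}$-statements into the corresponding $\norm_3^N$-statements, and the resulting condition is exactly $\norm_3^N(A)\geq n+1$.

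The main obstacle is mild: it is the bookkeeping about restriction sets, namely that $A\restrict z$ depends only on $z\cap N$ when $A\subseteq P_N$, and that the complement $(N+1)\bs z$ acts as $N\bs z$ from $A$'s point of view. Once that is noted, the argument is just an unwinding of the recursive definition. Alternatively, one could invoke the preceding corollary characterizing $\norm_3(A)$ through the minimum number of parts needed to split $A$, and check that a partition of $N$ splitting $A$ extends to a partition of $N+1$ (drop the new vertex $N$ into any block, which cannot create a new $A\restrict V_k\neq\emptyset$ since no $a\in A$ contains $N$), while intersecting a partition of $N+1$ with $N$ yields a partition of $N$ still splitting $A$; hence the minimal number of blocks agrees in both settings. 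I prefer the direct induction above because it sidesteps the minor edge-case analysis ($A=\emptyset$, one-block partitions) needed to apply that corollary.
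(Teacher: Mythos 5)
Your proof is correct. The paper states this as an unproved corollary placed directly after the splitting characterization (if $A$ can be split by $c$ but not $c-1$ sets, with $2^n < c \leq 2^{n+1}$, then $\norm_3(A)=n+1$), so the intended argument is evidently the one you relegate to your final paragraph: the minimal number of blocks needed to split $A$ is unchanged when passing from partitions of $N$ to partitions of $N+1$, because the new vertex can be absorbed into any block without creating a nonempty $A\restrict V_k$, and intersecting with $N$ goes the other way. Your primary argument instead unwinds the recursive definition directly, proving $\norm_3^N(A)\geq n \iff \norm_3^{N+1}(A)\geq n$ by induction on $n$ with the quantifier over $A$ kept inside the induction; the one point that needs care --- that $A\restrict z$ and $A\restrict\big((N{+}1)\bs z\big)$ depend only on $z\cap N$ because no element of $A$ contains the vertex $N$ --- is exactly the point you isolate, and the bijection $z\mapsto z\cap N$ from the relevant split data over $N+1$ onto that over $N$ is handled correctly. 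What your route buys is self-containedness (it does not rely on the two splitting theorems, and it silently covers the $A=\emptyset$ and norm-$0$ cases that the quoted splitting corollary technically excludes); what the splitting route buys is brevity, which is presumably why the paper presents the statement as an immediate corollary. Either argument is acceptable.
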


\begin{observation}
	For any $c \leq N$, there exists a set $A$ such that $A$ can be split by $c$
	sets, but not $c-1$ sets. To see this, create the set $C \subseteq N$ such
	that $|C| = c$ and take $A = \{ \{a,b\} : a,b \in C \land a \neq b \}$. 
\end{observation}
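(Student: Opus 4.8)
The plan is to verify directly that the proposed witness — the edge set of the complete graph on a $c$-element subset $C\subseteq N$ — has exactly the two required properties, separating the argument into an upper-bound part (``$A$ is split by $c$ sets'') and a lower-bound part (``$A$ is not split by $c-1$ sets''). First I would record the membership condition: every element of $A$ is a two-element subset of $C\subseteq N$, hence lies in $P_N$, so $A\subseteq P_N$ and $\norm_3^N(A)$ makes sense. I would also dispose of the degenerate case $c=0$ (which forces $N=0$ and is witnessed by $A=\emptyset$), and note that for $c=1$ we again have $A=\emptyset$; from then on the interesting range is $c\ge 2$, where $A$ is genuinely the complete graph $K_c$.

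For the \emph{upper bound}, enumerate $C=\{v_0,\dots,v_{c-1}\}$ and build a partition $V_0,\dots,V_{c-1}$ of $N$ by placing $v_i\in V_i$ for each $i<c$ and dumping every element of $N\bs C$ into, say, $V_0$. Each $V_i$ then meets $C$ in at most one point, so no two-element subset of $C$ can be contained in a single $V_i$; that is, $A\restrict V_i=\emptyset$ for all $i<c$. Hence $A$ is split by these $c$ sets. (If one insists on nonempty parts, note that $c\le N$ guarantees enough vertices to keep all parts nonempty, so this is harmless.)

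For the \emph{lower bound}, suppose towards a contradiction that $V_0,\dots,V_{c-2}$ is a partition of $N$ with $A\restrict V_k=\emptyset$ for every $k<c-1$. Sending each $v\in C$ to the unique index $k$ with $v\in V_k$ defines a function from the $c$-element set $C$ to the $(c-1)$-element set $\{0,\dots,c-2\}$, which by the pigeonhole principle cannot be injective; so there are distinct $a,b\in C$ lying in a common part $V_k$. Then $\{a,b\}\in A$ and $\{a,b\}\subseteq V_k$, so $A\restrict V_k\ne\emptyset$, a contradiction. Therefore $A$ cannot be split by $c-1$ sets, finishing the proof.

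I do not expect a genuine obstacle here: the only points requiring care are the precise reading of ``partition'' (empty parts are harmless, as just observed) and the trivial small cases $c\in\{0,1\}$. Conceptually the whole statement is just $\chi(K_c)=c$: ``$A$ is split by $m$ sets'' says exactly that the complete graph on $C$ is properly $m$-colorable, with color classes the parts $V_k$ and the irrelevant vertices of $N\bs C$ colored arbitrarily, and the chromatic number of $K_c$ equals $c$ by the very pigeonhole argument used above.
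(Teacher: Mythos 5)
Your proposal is correct and uses exactly the same witness as the paper (the edge set of the complete graph on a $c$-element subset $C\subseteq N$); the paper simply states this witness without verification, and your upper-bound partition together with the pigeonhole argument for the lower bound supplies precisely the details the paper leaves implicit. No substantive difference in approach.
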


\begin{observation}
	Assume $2\leq n,c$ satisfy $c\cdot (n-1)\leq N$.  Then there is a set of
	$n$-gons that can be split by $c$ sets, but not by $c-1$ sets. To see this,
	consider the set $C = c(n-1) \subseteq N$, and the set $A$ of every $n$-gon
	in $C$. 
\end{observation}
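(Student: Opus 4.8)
The plan is to verify directly that the witness named in the statement works. Take $C$ to be the initial segment $c(n-1)$ of $N$ --- this is a subset of $N$ exactly because $c(n-1)\le N$ --- and let $A$ be the collection of all $n$-element subsets of $C$. First I would record that $A$ really is a nonempty set of $n$-gons: since $c\ge 2$ and $n\ge 2$ we have $|C|=c(n-1)\ge 2(n-1)\ge n$, so $A\ne\emptyset$ and every member of $A$ has size $n\ge 2$, hence $A\subseteq P$.

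Next I would show that $A$ can be split by $c$ sets by exhibiting the partition. Cut $C$ into $c$ consecutive blocks $C_0,\dots,C_{c-1}$ of size $n-1$ each, then throw all the remaining vertices $N\bs C$ into the first block, obtaining parts $V_0,\dots,V_{c-1}$ which partition $N$. If some $a\in A$ were contained in $V_k$, then since $a\subseteq C$ we would get $a=a\cap C\subseteq V_k\cap C=C_k$, impossible as $|a|=n>n-1=|C_k|$. Hence $A\restrict V_k=\emptyset$ for every $k$, so $V_0,\dots,V_{c-1}$ split $A$.

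Finally I would rule out splitting by $c-1$ sets by a pigeonhole argument against an arbitrary partition. Let $V_0,\dots,V_{c-2}$ be any partition of $N$ into $c-1$ parts; then $V_0\cap C,\dots,V_{c-2}\cap C$ partition $C$. If each $V_k\cap C$ had at most $n-1$ elements we would get $|C|\le(c-1)(n-1)<c(n-1)=|C|$ (using $n-1\ge 1$), a contradiction; so some $V_k\cap C$ has at least $n$ elements, and any $n$ of them form a member $a$ of $A$ with $a\subseteq V_k$. Thus this partition fails to split $A$, and since it was arbitrary, $A$ cannot be split by $c-1$ sets.

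The whole thing is elementary counting, so I do not anticipate a genuine obstacle; the only point that deserves a moment's care is the treatment of the ``slack'' vertices $N\bs C$ in the construction of the $c$-set splitting --- one must check that adjoining them to the blocks $C_k$ cannot enlarge any part enough to swallow a member of $A$, which holds precisely because every member of $A$ lies inside $C$ and is therefore confined to $V_k\cap C$.
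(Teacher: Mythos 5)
Your proposal is correct and is exactly the construction the paper intends: the paper only names the witness $C=c(n-1)$ and the set $A$ of all $n$-gons in $C$ without verifying it, and your write-up supplies precisely the missing verification (blocks of size $n-1$ absorbing $N\bs C$ for the $c$-splitting, and pigeonhole on $|C|=c(n-1)>(c-1)(n-1)$ against any $(c-1)$-partition). No gaps.
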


\begin{theorem}
	For any sets $A, B \subseteq P$, if $A$ can be split by $m$ sets and $B$ can
	be split by $n$ sets, then $A\cup B$ can be split by $mn$ sets. 
\end{theorem}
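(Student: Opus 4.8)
The plan is to use the common refinement of the two partitions that witness the two splittings. First I would fix a partition $V_0,\dots,V_{m-1}$ of $N$ such that $A$ is split by $V_0,\dots,V_{m-1}$, and a partition $W_0,\dots,W_{n-1}$ of $N$ such that $B$ is split by $W_0,\dots,W_{n-1}$; these exist by hypothesis. Then for $k<m$ and $j<n$ I set $U_{k,j}=V_k\cap W_j$. Since the $V_k$ are pairwise disjoint with union $N$, and likewise the $W_j$, the $mn$ sets $U_{k,j}$ are pairwise disjoint with union $N$, so, after identifying the index set $\{(k,j):k<m,\ j<n\}$ with $mn$, they form a partition of $N$ into $mn$ sets. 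Some cells $U_{k,j}$ may be empty; this does no harm, since no $c\in P$ satisfies $c\subseteq\emptyset$, so an empty cell vacuously contributes nothing to a splitting.

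The only thing left to verify is that $A\cup B$ is split by the sets $U_{k,j}$, that is, $(A\cup B)\restrict U_{k,j}=\emptyset$ for all $k<m$ and $j<n$. So fix such $k,j$ and suppose toward a contradiction that $c\in A\cup B$ with $c\subseteq U_{k,j}$. If $c\in A$, then $U_{k,j}\subseteq V_k$ gives $c\subseteq V_k$, contradicting $A\restrict V_k=\emptyset$. If $c\in B$, then $U_{k,j}\subseteq W_j$ gives $c\subseteq W_j$, contradicting $B\restrict W_j=\emptyset$. Either way we reach a contradiction, so $(A\cup B)\restrict U_{k,j}=\emptyset$, and therefore $A\cup B$ is split by the $mn$ sets $U_{k,j}$, which is exactly the assertion that $A\cup B$ can be split by $mn$ sets.

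I do not expect a serious obstacle here: the argument is essentially the remark that refining a partition preserves the splitting property, so the common refinement of an $m$-splitting of $A$ and an $n$-splitting of $B$ — which has $mn$ cells — splits $A\cup B$. The one point that needs a word of care is bookkeeping, namely confirming that the common refinement genuinely counts as a partition of $N$ into $mn$ sets; this is handled by permitting empty cells, which vacuously satisfy the splitting requirement because every member of $P$ has at least two elements.
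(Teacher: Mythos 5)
Your proof is correct and is essentially the paper's own argument: both take the common refinement $\{V_k\cap W_j\}$ of the two witnessing partitions and observe that it splits $A\cup B$. Your additional care about empty cells and the verification that the refinement is a genuine partition into $mn$ sets is sound but does not change the approach.
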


\begin{proof}
	Let set $A$ be split by a partition $V_0, V_1,\ldots, V_{m-1}$ and let
	$W_0, W_1,\ldots, W_{n-1}$ be a partition of $N$ splitting the set $B$. 
	Then the family of sets 
	\[\{ V_i\cap W_j: i\in m \land j \in n \}\]
	splits $A\cup B$.    
\end{proof}

\begin{corollary}
	If we have two sets $A, B \subseteq P$, then 
	$$\norm_3(A\cup B) \leq \norm_3(A) + \norm_3(B).$$ 
\end{corollary}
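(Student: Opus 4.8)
The plan is to obtain the corollary by feeding the preceding multiplicativity theorem into the two theorems that translate between the value of $\norm_3$ and the minimal number of sets needed to split a set. Write $p=\norm_3(A)$ and $q=\norm_3(B)$; the goal is the inequality $\norm_3(A\cup B)\le p+q$. Note that $p,q\in\omega$, since $\norm_3$ takes values in $\omega$, so all the exponents appearing below make sense.

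First I would record the contrapositive of the theorem ``if $A$ cannot be split by $2^n$ sets, then $\norm_3(A)>n$'': whenever $\norm_3(A)\le n$, the set $A$ can be split by $2^n$ sets. Applying this with $n=p$ (which is legitimate since $\norm_3(A)=p\le p$) shows that $A$ can be split by $2^p$ sets, and likewise $B$ can be split by $2^q$ sets. Next I would invoke the theorem ``if $A$ can be split by $m$ sets and $B$ can be split by $n$ sets, then $A\cup B$ can be split by $mn$ sets'' with $m=2^p$ and $n=2^q$, concluding that $A\cup B$ can be split by $2^p\cdot 2^q=2^{p+q}$ sets. Finally I would apply the theorem ``if $A$ can be split by $2^n$ sets, then $\norm_3(A)\le n$'' with $n=p+q$, which yields $\norm_3(A\cup B)\le p+q=\norm_3(A)+\norm_3(B)$, as desired.

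There is no genuine obstacle here; the argument is a three-line chaining of results already established. The only point requiring a moment of care is bookkeeping with powers of two: the two translation theorems are phrased in terms of splitting by $2^n$ for natural $n$, so one must check that the quantities fed in ($2^p$, $2^q$, and $2^{p+q}$) are of that form, which they are because $\norm_3$ is $\omega$-valued and $2^p\cdot 2^q=2^{p+q}$. The degenerate cases in which $A$ or $B$ is empty need no separate treatment, since then the corresponding norm is $0$ and the bound follows from the chain above (or directly from monotonicity, Theorem~\ref{subsets}).
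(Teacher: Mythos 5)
Your proof is correct and is exactly the argument the paper intends: the corollary is stated without proof as an immediate consequence of the preceding theorem on splitting $A\cup B$ by $mn$ sets, combined with the two theorems relating $\norm_3$ to splittability by $2^n$ sets, which is precisely the chain you spell out.
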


\begin{observation}
	Without additional constraints, this is the best bound we can find.
\end{observation}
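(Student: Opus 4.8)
The plan is to show the bound is tight in the strongest possible sense: for every $m,n\in\omega$ there are sets $A,B\subseteq P$ with $\norm_3(A)=m$, $\norm_3(B)=n$, and $\norm_3(A\cup B)=m+n$. This rules out replacing $\norm_3(A)+\norm_3(B)$ by any smaller expression (e.g.\ $\norm_3(A)+\norm_3(B)-1$) as a uniform bound. By the corollary stating that $\norm_3^N(A)=\norm_3^{N+1}(A)$, the value of $\norm_3$ does not depend on the ambient $N$ once it is large enough, so it suffices to exhibit such $A,B$ over a ground set of size $2^{m+n}$; the cases $m=0$ or $n=0$ are trivial (take $A$ or $B$ empty), so I would assume $m,n\ge 1$.

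First I would set up a product ground set. Identify $N$ with the grid $\{0,\dots,2^m-1\}\times\{0,\dots,2^n-1\}$, writing its points as pairs $(x,y)$, and let $A$ be the set of all $2$-element subsets $\{(x,y),(x',y')\}$ whose two points differ in the first coordinate, and let $B$ be the set of all $2$-element subsets whose two points agree in the first coordinate (hence differ in the second). Then $A,B\subseteq P$, they are disjoint, and $A\cup B$ is exactly the set of all $2$-element subsets of $N$, i.e.\ the edge set of the complete graph on $2^{m+n}$ vertices. I would stress that using a \emph{product} ground set, rather than disjoint ground sets for $A$ and $B$, is the crucial choice: with disjoint supports one only gets $\norm_3(A\cup B)$ of the order of $\max(\norm_3(A),\norm_3(B))$, nowhere near the sum.

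Next I would read off the three norm values. For $\norm_3(A\cup B)$: by the observation on complete graphs, the set of all $2$-element subsets of a $c$-element set can be split by $c$ sets but not by $c-1$; applying this with $c=2^{m+n}$ (so that $2^{m+n-1}<c\le 2^{m+n}$) together with the corollary that identifies the norm in terms of $c$, I get $\norm_3(A\cup B)=m+n$. For the upper bounds: the partition of the grid into its $2^m$ ``columns'' $V_x=\{(x,y):y<2^n\}$ splits $A$ (no edge of $A$ lies inside a single column), so by the theorem on splitting, $\norm_3(A)\le m$; symmetrically the partition into the $2^n$ ``rows'' splits $B$, giving $\norm_3(B)\le n$. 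Finally I would close the loop using the corollary $\norm_3(A\cup B)\le\norm_3(A)+\norm_3(B)$: from $m+n=\norm_3(A\cup B)\le\norm_3(A)+\norm_3(B)\le m+n$ everything is forced to equality, so $\norm_3(A)=m$ and $\norm_3(B)=n$, completing the construction.

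The only real obstacle is the one flagged above: realizing that the tight example must \emph{interleave} the supports of $A$ and $B$ in a grid so that their union is the complete graph. Once the construction is in hand, everything else is routine, and in particular the matching lower bounds $\norm_3(A)\ge m$ and $\norm_3(B)\ge n$ come for free from the squeeze, with no separate argument needed.
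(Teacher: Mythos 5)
Your construction is correct and is essentially the paper's own: the paper also takes a product ground set ($N=n^2$ arranged into $n$ blocks of size $n$), lets $A$ be the between-block pairs and $B$ the within-block pairs so that $A\cup B$ is the complete graph, and uses the pigeonhole principle to show the union cannot be split by fewer than $n^2$ sets. Your version is a mild refinement — using a $2^m\times 2^n$ grid to handle unequal norm values and closing the squeeze $m+n=\norm_3(A\cup B)\leq\norm_3(A)+\norm_3(B)\leq m+n$ to pin down the exact norms — but the key idea (interleaving the supports so the union is complete) is the same.
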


\begin{proof}
	Let $n$ be any natural number, and $N = n^2$. We define the family of $n$
	sets $\{ V_0, V_1,\ldots,V_{n-1} \}$ such that $V_k = \{ kn, kn+1,\ldots,
	(k+1)n-1 \}$. We also define the sets   
	\[A = \{ \{a,b\}: a,b\in N \land (\forall k \in n)(a\in V_k\implies b\notin V_k) \}\]
	and 
	\[B = \{ \{a,b\}: a,b \in N \land (\forall k\in n)(a\in V_k \implies b \in
	V_k) \land a \neq b \}.\] 
	Notice that $A$ is split by the sets $V_k$ for $k \in n$, and $B$ is split
	by the sets $U_k = \{ m \in N : m \text{ is congruent to } k \text{ modulo }
	n\}$, and $A\cup B$ can be split by $n^2$ sets. We will argue that $A\cup B$
	cannot be split by $n^2 -1$ sets. Towards contradiction, suppose we have the
	sets $W_0, W_1, \ldots, W_{n^2 - 2}$. By the pigeon hole principle, we know
	at least one set $W$ must contain two elements $a$ and $b$, but we know $\{
	a,b \} \in A\cup B$, which contradicts that $A\cup B$ is split by $W_0,
	\ldots, W_{n^2-2}$. 
\end{proof}

\subsection{From polygons to graphs}

\begin{definition}
	We define the function $f:\pow(N) \rightarrow \omega$ as follows, let $a$ be
	any subset of $N$ and take $a_0, a_1,\ldots, a_k$ such that $a_0 < a_1 <
	\ldots < a_k$, and $a = \{ a_0, a_1, \ldots, a_k \}$. We then say 
	\[f(a) = a_0 N^0 + a_1 N^1 + \ldots + a_k N^k.\] 
\end{definition}

\begin{definition}
	We say a function $g: P \rightarrow P$ is a polygon-reducing function if
	$g(a) \subseteq a$ and $g(a) = a$ if and only if $a$ is an edge. 
\end{definition}

\begin{definition}
	Let $g$ be any polygon-reducing function, we define the function $\psi_g:
	\pow(P) \rightarrow \pow(P)$ as follows. Let $A$ be any non-empty subset of
	$P$, and let $a\in A$ be such that $f(a) = \max\{ f(a'): a'\in A \}$. We
	define   
	\[ \psi_g(A) = (A\bs \{a\})\cup \{g(a)\} .\]
	We also set $\psi_g(\emptyset)=\emptyset$. 
\end{definition}

\begin{theorem}
	For any polygon-reducing function $g$ and any set $A\subseteq P$, 
	$$\norm_3(A)\leq \norm_3(\psi_g(A)).$$ 
\end{theorem}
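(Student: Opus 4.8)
The plan is to reduce the statement to the equivalence, established earlier, between the value of $\norm_3$ and the least number of sets needed to split a set. Write $a\in A$ for the element with the largest $f$-value, so that $\psi_g(A)=(A\bs\{a\})\cup\{g(a)\}$, and recall that $g(a)\subseteq a$ and $g(a)\in P$. If $A=\emptyset$ then $\psi_g(A)=\emptyset$ and both norms equal $0$, so from now on assume $A\neq\emptyset$; then $\psi_g(A)\neq\emptyset$ as well, since $\psi_g$ leaves every element of $A$ other than $a$ in place. Note also that when $a$ is an edge we have $g(a)=a$, hence $\psi_g(A)=A$ and the inequality is trivial; but the argument below does not rely on this and uses only $g(a)\subseteq a$.

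The key step is the following assertion: \emph{every partition of $N$ that splits $\psi_g(A)$ also splits $A$}. Suppose $V_0,\dots,V_{m-1}$ is a partition of $N$ with $\psi_g(A)\restrict V_k=\emptyset$ for all $k<m$. Each $b\in A\bs\{a\}$ belongs to $\psi_g(A)$, so $b\not\subseteq V_k$ for all $k$. For $a$ itself, $g(a)\in\psi_g(A)$ gives $g(a)\not\subseteq V_k$ for all $k$; since $g(a)\subseteq a$, the inclusion $a\subseteq V_k$ would force $g(a)\subseteq V_k$, and therefore $a\not\subseteq V_k$ for all $k$. Hence $A\restrict V_k=\emptyset$ for every $k<m$, i.e.\ $V_0,\dots,V_{m-1}$ splits $A$. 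In particular, if $\psi_g(A)$ can be split by $c$ sets then so can $A$.

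To finish, set $m=\norm_3(\psi_g(A))$. Applying the theorem that a set which cannot be split by $2^m$ sets has $\norm_3>m$ in contrapositive form to $\psi_g(A)$ (whose norm is $m$, which is not strictly larger than $m$), we conclude that $\psi_g(A)$ can be split by $2^m$ sets. By the key step, $A$ can be split by $2^m$ sets, and then the theorem that a set splittable by $2^m$ sets has $\norm_3\le m$ gives $\norm_3(A)\le m=\norm_3(\psi_g(A))$, which is what we wanted.

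I do not anticipate a real obstacle: the whole content is the one-line monotonicity remark that being contained in a part of a partition only becomes easier when $a$ is replaced by the smaller set $g(a)$, and this is exactly what transports the splitting property from $\psi_g(A)$ back to $A$. The only matters needing a little care are the degenerate cases $A=\emptyset$ and $a$ an edge, both harmless since $g(a)\subseteq a$ holds unconditionally and $\psi_g$ fixes all other elements.
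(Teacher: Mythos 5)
Your proof is correct, but it takes a genuinely different route from the paper. The paper argues directly by induction on $n$ through the recursive definition of $\norm_3$: assuming $\norm_3(A)\geq k+1$, it fixes an arbitrary $z\subseteq N$ and splits into the cases $a\not\subseteq z$ (where $A\restrict z\subseteq\psi_g(A)\restrict z$ and monotonicity applies) and $a\subseteq z$ (where $\psi_g(A)\restrict z=\psi_g(A\restrict z)$ and the inductive hypothesis applies). You instead route everything through the splitting characterization established earlier in the section --- $\norm_3(B)\leq m$ if and only if $B$ can be split by $2^m$ sets --- and reduce the theorem to the one-line observation that any partition splitting $\psi_g(A)$ also splits $A$, because $g(a)\subseteq a$ means $a\subseteq V_k$ would force $g(a)\subseteq V_k$. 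Your version is shorter and more conceptual, and it actually proves the slightly stronger fact that the minimal splitting number of $A$ is at most that of $\psi_g(A)$; the cost is that it leans on the two splitting theorems as prerequisites, whereas the paper's induction is self-contained from the definition of $\norm_3$. Your handling of the degenerate cases ($A=\emptyset$, $a$ an edge, $g(a)$ already an element of $A\setminus\{a\}$) is careful and correct, and the final step --- transferring a $2^m$-set splitting of $\psi_g(A)$ back to $A$ and invoking the characterization in both directions --- is sound.
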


\begin{proof}
	By induction on $n$ we will show that for every set $A\subseteq P$,
	
	$\norm_3(A)\geq n$ implies $\norm_3\big(\psi_g(A)\big)\geq n$. 
	
	\noindent When $n = 0$ the implication holds by definition. When $n = 1$ and
	$\norm_3(A)\geq n$, it must be that $A$ is non-empty, and this implies
	$\psi_g(A)$ is non-empty, which gives us $\norm_3(\psi_g(A)) \geq 1$.  
	
	\noindent Inductive step: Assume that for every set $A'$, $\norm_3(A')
	\geq k$ implies $\norm_3(\psi_g(A')) \geq k$. Let $A$ be any set such that
	$\norm_3(A) \geq k+1$. Let $z$ be any subset of $N$, without loss of
	generality, we can assume $\norm_3(A\restrict z) \geq k$. If $a \in A$ is such
	that $f(a) = \max\{ f(a'): a' \in A \}$, then there are two cases: 
	\begin{enumerate}
		\item $a \nsubseteq z$, in this case $A\restrict z \subseteq
		\psi_g(A)\restrict z$, giving us $\norm_3(A\restrict z) \leq
		\norm_3(\psi_g(A)\restrict z)$. 
		\item $a\subseteq z$, in this case $\psi_g(A)\restrict z = \psi_g(A\restrict
		z)$, and by the inductive hypothesis we have $k\leq
		\norm_3(\psi_g(A\restrict z)))$.  
	\end{enumerate}
	In both cases we have $k \leq\norm_3(\psi_g(A)\restrict z)$. As $z$ is
	arbitrary, we have 
	\[\norm_3(A) \geq k+1 \implies \norm_3(\psi_g(A)) \geq k+1.\]  
\end{proof}

\begin{corollary}\label{edge-ineq}
	If we have an arbitrary set $A \subseteq P$, and we create the set $A'$ by
	replacing every polygon in $A$ with an edge of that polygon, then $\norm_3(A)
	\leq \norm_3(A')$. 
\end{corollary}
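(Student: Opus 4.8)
The plan is to obtain Corollary \ref{edge-ineq} as an iterated application of the preceding theorem, for a carefully chosen polygon-reducing function $g$. The first ingredient is a monotonicity property of the ranking function $f$: if $a,b\subseteq N$ and $|a|>|b|$, then $f(a)>f(b)$. Indeed, a set of size $m\geq 2$ has $f$-value at least $N^{m-1}$, since its leading term $a_{m-1}N^{m-1}$ has coefficient $a_{m-1}\geq m-1\geq 1$, whereas a set of size at most $m-1$ has $f$-value at most $\sum_{i=0}^{m-2}(N-1)N^i=N^{m-1}-1$. A consequence is that whenever a set $A\subseteq P$ contains at least one polygon, the element of $A$ with the largest $f$-value is a largest element of $A$, hence has size $\geq 3$; that is, the element $a$ selected in the definition of $\psi_g(A)$ is itself a polygon.

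Next I would choose $g$. Given the set $A'$ formed from $A$ by replacing each polygon $p\in A$ with some chosen edge $e_p$ of $p$, define $g\colon P\to P$ by $g(a)=a$ for every edge $a$, by $g(p)=e_p$ for every polygon $p\in A$, and by letting $g(p)$ be some fixed $2$-element subset of $p$ for every other polygon $p$ (such a subset exists, as every polygon has at least three vertices). Then $g(a)\subseteq a$ always and $g(a)=a$ precisely when $a$ is an edge, so $g$ is polygon-reducing; moreover $g$ sends every polygon to an edge.

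Finally I would run the iteration. Applying $\psi_g$ to a set that still contains a polygon removes one polygon — the $f$-largest element, which by the first paragraph is a polygon — and replaces it by an edge, so the number of polygons drops by exactly one and no original edge is ever deleted. Hence after $t$ steps, where $t$ is the number of polygons in $A$, the result is exactly $A'$. Applying the preceding theorem $t$ times then yields
\[\norm_3(A)\leq\norm_3(\psi_g(A))\leq\cdots\leq\norm_3\big(\psi_g^{\,t}(A)\big)=\norm_3(A'),\]
which is the claim.

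The only step needing genuine care is the monotonicity of $f$ in the size of its argument: this is precisely what prevents $\psi_g$ from stalling on an edge while polygons remain, and therefore what guarantees that the iteration terminates at $A'$ rather than at some intermediate set. Everything else is bookkeeping about how $\psi_g$ acts on a finite set.
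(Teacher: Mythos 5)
Your proof is correct and follows the route the paper intends: the corollary is obtained by iterating the preceding theorem with a polygon-reducing function $g$ that sends each polygon of $A$ to its chosen edge, and your verification that the $f$-maximal element of any intermediate set is a polygon whenever one remains (via the size-monotonicity of $f$) is exactly the detail needed to ensure the iteration terminates at $A'$. The paper leaves these details implicit, so your write-up is a faithful, complete version of the same argument.
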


\begin{example}
	Without extra constraints, we cannot guarantee 
	\[\norm_3(A) =\norm_3(\psi_g(A)).\]
	Consider the set $A = \{ \{0, 1\}, \{1,2\}, \{0,1,2\} \}$, and any
	polygon-reducing function $g$ such that $g(\{0,1,2\}) = \{2,0\}$. We get
	$\psi_g(A) = \{\{0,1\},\{1,2\},\{2,0\}\}$. We know that $\norm_3(A) = 1$ as
	$A \restrict \{1\} = \emptyset=A\restrict N\setminus\{1\}$, and
	$\norm_3(\psi_g(A)) = 2$, as explained in Example \ref{examples}(2).
\end{example}

\begin{definition}
	Let $E$ be the set of all edges of $N$. For a set $A \subseteq
	P$, we define the set 
	\[\mathcal{E}_A = \{ E' \subseteq E: (\forall e \in
	E')(\exists p \in A) e \subseteq p \land (\forall p \in A)(\exists e \in
	E')e\subseteq p \}.\] 
	That is $E'\in {\mathcal E}_A$ if every edge in $E'\subseteq E$ is an edge of
	some element of $A$ and every element of $A$ has an edge in $E'$.  
\end{definition}

\begin{theorem}
	Given an arbitrary set $A\subseteq P$,
	\[ \norm_3(A) = \min\{\norm_3(E'): E' \in \mathcal{E}_A\} .\]
\end{theorem}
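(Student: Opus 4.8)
The plan is to prove the two inequalities $\norm_3(A) \le \min\{\norm_3(E') : E' \in \mathcal{E}_A\}$ and $\norm_3(A) \ge \min\{\norm_3(E') : E' \in \mathcal{E}_A\}$ separately.

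For the first inequality, I would invoke Corollary \ref{edge-ineq}: given any $E' \in \mathcal{E}_A$, I want to show $\norm_3(A) \le \norm_3(E')$. The idea is to realize $E'$ (or something norm-equivalent to it) by applying polygon-reducing operations to $A$. Concretely, fix $E' \in \mathcal{E}_A$. For each polygon $p \in A$, the defining condition on $\mathcal{E}_A$ guarantees there is an edge $e_p \in E'$ with $e_p \subseteq p$; choose one such $e_p$ for each polygon. Replacing every polygon $p$ in $A$ by $e_p$ produces a set $A'$ consisting entirely of edges, all of which lie in $E'$, and by Corollary \ref{edge-ineq} (applied iteratively through a suitable polygon-reducing function, picking off the largest $f$-value each time) we get $\norm_3(A) \le \norm_3(A')$. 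Since $A' \subseteq E'$, Theorem \ref{subsets} gives $\norm_3(A') \le \norm_3(E')$, so $\norm_3(A) \le \norm_3(E')$. Taking the minimum over $E' \in \mathcal{E}_A$ finishes this direction. One technical point: the iterated reduction must be set up so that at each stage we are reducing a polygon to the specific edge $e_p$ we picked; this is fine because a polygon-reducing function only needs $g(a) \subseteq a$ with equality exactly on edges, and we never need to reduce an edge.

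For the reverse inequality, I need to exhibit a single $E^* \in \mathcal{E}_A$ with $\norm_3(E^*) \le \norm_3(A)$; then the minimum on the right is at most $\norm_3(A)$. The natural candidate is the set of \emph{all} edges that are edges of some element of $A$, namely $E^* = \{e \in E : (\exists p \in A)\, e \subseteq p\}$, provided this lies in $\mathcal{E}_A$ — which it does, since every $p \in A$ has $|p| \ge 2$ and hence contains at least one edge. So I must show $\norm_3(E^*) \le \norm_3(A)$, equivalently (contrapositive form against the split characterization) that any partition of $N$ into $2^n$ sets that splits $A$ also splits $E^*$. Suppose $V_0, \dots, V_{2^n - 1}$ splits $A$, i.e. no element of $A$ is contained in any single $V_k$. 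I claim no edge of $E^*$ is contained in a single $V_k$ either: if $e = \{a,b\} \in E^*$ with $e \subseteq V_k$, pick $p \in A$ with $e \subseteq p$; then $a, b \in p$, and since $A$ is split, $p \not\subseteq V_j$ for every $j$ — that alone is not yet a contradiction, so instead I argue directly that $p$ has two elements in the same block. Hmm — the cleaner route is: $A$ split by $V_0,\dots,V_{2^n-1}$ means every $p\in A$ meets at least two blocks; but $p$ could still have two elements $a,b$ in the same block $V_k$, so $\{a,b\}\in E^*$ with $\{a,b\}\subseteq V_k$, and then $E^*$ is \emph{not} split. So I should instead show the contrapositive the other way: if $E^*$ can be split by $2^n$ sets then so can $A$. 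If $V_0,\dots,V_{2^n-1}$ splits $E^*$, then no block $V_k$ contains two vertices that form an edge of $E^*$; but if some $p \in A$ had $p \subseteq V_k$, then any edge $e \subseteq p$ would be in $E^*$ and contained in $V_k$, contradiction — so the same partition splits $A$. Hence $A$ is split by $2^n$ sets whenever $E^*$ is, which by the split-characterization corollary gives $\norm_3(A) \le \norm_3(E^*)$, and since $E^* \in \mathcal{E}_A$ we get $\min\{\norm_3(E') : E' \in \mathcal{E}_A\} \le \norm_3(A)$.

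The main obstacle I anticipate is keeping the direction of the split-characterization straight: ``$A$ split by $c$ sets'' controls the norm from above, and the relation between splits of $A$, splits of $E^*$, and splits of an arbitrary $E' \in \mathcal{E}_A$ runs in only one direction in each case, so I must be careful which of $E^*$ and $E'$ plays which role and in which inequality. In particular, for the $\le$ direction it is essential to use Corollary \ref{edge-ineq} (reducing polygons to chosen edges) rather than any splitting argument, since an arbitrary $E' \in \mathcal{E}_A$ need not be obtainable from $A$ by any monotone operation on the ground set — it is obtained by a polygon-reduction, which is exactly what Corollary \ref{edge-ineq} is built to handle. Once the two halves are assembled, combining them yields the claimed equality.
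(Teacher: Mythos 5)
Your first inequality is fine and follows essentially the paper's route (in fact you spell out the step ``$A'\subseteq E'$ plus Theorem \ref{subsets}'' that the paper leaves implicit). The reverse inequality, however, has a genuine gap. You need to exhibit some $E^*\in\mathcal{E}_A$ with $\norm_3(E^*)\leq\norm_3(A)$, but the witness you chose --- the set $E^*$ of \emph{all} edges of elements of $A$ --- does not work, and the argument you give for it proves the wrong inequality. You correctly noticed that a partition splitting $A$ need not split $E^*$, and then switched to showing that a partition splitting $E^*$ also splits $A$; but that yields $\norm_3(A)\leq\norm_3(E^*)$, which is just another instance of the first inequality and gives no upper bound on $\min\{\norm_3(E'):E'\in\mathcal{E}_A\}$. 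The closing step ``\dots gives $\norm_3(A)\le\norm_3(E^*)$, and since $E^*\in\mathcal{E}_A$ we get $\min\le\norm_3(A)$'' is a non sequitur. Worse, the inequality you actually need for this particular $E^*$ is false: for $A=\{\{0,1,2\}\}$ we have $\norm_3(A)=1$ (take $z=\{0\}$), while $E^*=\{\{0,1\},\{1,2\},\{0,2\}\}$ has $\norm_3(E^*)=2$ by Example \ref{examples}. So the all-edges set is in general not the minimizer, and no argument can make it one.

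The repair is to choose the edges adaptively with respect to an optimal splitting of $A$, which is what the paper does. Let $V_0,\dots,V_{n-1}$ be a partition splitting $A$ with $n$ minimal. Since each $p\in A$ is not contained in any single block, it meets at least two blocks, so one may fix an edge $e_p\subseteq p$ whose two endpoints lie in different blocks. The set $\{e_p:p\in A\}$ belongs to $\mathcal{E}_A$ and is split by the very same partition $V_0,\dots,V_{n-1}$, hence $\norm_3(\{e_p:p\in A\})\leq\norm_3(A)$ by the splitting characterization of the norm. This supplies the witness and gives $\min\{\norm_3(E'):E'\in\mathcal{E}_A\}\leq\norm_3(A)$, completing the proof.
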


\begin{proof}
	From Corollary \ref{edge-ineq}, we already have $\norm_3(A) \leq
	\min\{\norm_3(E'): E' \in \mathcal{E}_A\}$, therefore if we show $\norm_3(A) \geq
	\min\{\norm_3(E'): E' \in \mathcal{E}_A\}$, the proof will be complete. Let $n$
	be such that $A$ can be split by $n$ sets, but not by $n-1$ sets, and let
	$V_0, V_1, \ldots, V_{n-1}$ be a partition of $N$ such that $A$ is split by
	$V_0, V_1, \ldots, V_{n-1}$. For every $a \in A$, fix an edge $e_a = \{ a_0,
	a_1 \}$ where for some $k \in n$ we have $a_0 \in V_k$ and $a_1 \notin
	V_k$. Notice that $\{ e_a : a\in A \}$ is an element of $\mathcal{E}_A$, and
	$\{ e_a : a\in A \}$ is split by $V_0, \ldots, V_{n-1}$. This gives us that
	$\min\{\norm_3(E'): E' \in \mathcal{E}_A\} \leq \norm_3(\{ e_a : a\in A \}) \leq
	\norm_3(A)$. Combining both inequalities, we have $\min\{\norm_3(E'): E' \in
	\mathcal{E}_A\} = \norm_3(A)$. 
\end{proof}	

\begin{observation}
	Suppose we have an arbitrary set $A\subseteq P$ such that for all $a \in A$,
	$a$ is an edge. Then the pair $(N,A)$ can be interpreted as a graph, with
	$N$ being a set of vertexes and $A$ being a set of undirected edges. 
\end{observation}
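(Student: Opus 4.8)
The plan is simply to unwind the standard definition of a finite simple undirected graph and to check that the pair $(N,A)$ meets it verbatim. Recall that such a graph is a pair $(V,E)$ in which $V$ is a set (the vertices) and $E$ is a set of two-element subsets of $V$ (the edges); the fact that edges are \emph{sets} rather than ordered pairs or multiset entries is precisely what records that the graph is simple and undirected.

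First I would observe that $A\subseteq P = P_N$, so every $a\in A$ satisfies $a\subseteq N$, and that by hypothesis every $a\in A$ is an edge, i.e.\ $|a|=2$. Hence each element of $A$ is a two-element subset of $N$, and therefore $A$ is itself a set of two-element subsets of $N$. Taking $V=N$ and $E=A$ then meets the definition above, so $(N,A)$ is a graph on vertex set $N$. It is also worth recording that the correspondence is exact: every simple undirected graph on vertex set $N$ arises in this way from a suitable $A\subseteq P_N$ all of whose members are edges, and distinct such $A$ give distinct graphs.

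There is no genuine obstacle here; the content is entirely a matching of definitions. The only points requiring a word of care are that no loops occur (a loop would be a singleton $\{v\}$, which is impossible since every $a\in P_N$ has $|a|\geq 2$) and that no parallel edges occur (since $A$ is a set, not a multiset). The reason to record the observation at this point is the payoff for the sequel: combined with the theorem that $\norm_3(A)=\min\{\norm_3(E'):E'\in\mathcal{E}_A\}$, it reduces the computation of $\norm_3$ on an arbitrary family $A\subseteq P$ to its computation on honest graphs, which is exactly the setting in which the relationship with the coloring number of a (hyper)graph can be brought to bear.
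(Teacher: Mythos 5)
Your argument is correct: the observation is a purely definitional matching, and the paper itself states it without proof precisely because the identification of a set of two-element subsets of $N$ with the edge set of a simple undirected graph on vertex set $N$ is immediate. Your additional remarks (no loops since every $a\in P_N$ has $|a|\geq 2$, no parallel edges since $A$ is a set) are exactly the points one would check, so the proposal agrees with the paper's intent.
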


\begin{definition}
	We say a graph $G = (V, E)$ is $c$--colorable if we can find a partition 
	\[V=V_0\cup V_1\cup \ldots\cup V_{c-1}\]
	such that $(\forall k \in c)(E\restrict V_k= \emptyset)$. 
\end{definition}

\begin{observation}
	Given an arbitrary set $A \subseteq \{ \{a,b\}: a,b \in N \land a\neq b \}$,
	$A$ can be split by $c$ sets if and only if the graph $(N,A)$ is
	$c$--colorable. 
\end{observation}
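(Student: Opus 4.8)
The plan is to observe that, once both sides are unwound, they are literally the same assertion about $A$, so the proof is a matter of matching definitions rather than doing any real work. Recall that ``$A$ can be split by $c$ sets'' means there is a partition $N = V_0 \cup \cdots \cup V_{c-1}$ into $c$ pairwise disjoint (possibly empty) pieces with $A \restrict V_k = \emptyset$ for every $k \in c$, while ``$(N,A)$ is $c$--colorable'' means there is a partition $N = V_0 \cup \cdots \cup V_{c-1}$ with $A \restrict V_k = \emptyset$ for every $k \in c$ (here the edge set is $E = A$). The key observation I would make is that for a two-element set $a = \{a_0, a_1\} \in A$, the condition $a \in A\restrict V_k$, i.e.\ $a \subseteq V_k$, holds exactly when both endpoints $a_0, a_1$ lie in $V_k$; hence $A \restrict V_k = \emptyset$ says precisely that the colour class $V_k$ spans no edge of $A$.

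With that observation in hand, both implications are immediate and symmetric. Given a partition $V_0, \ldots, V_{c-1}$ witnessing that $A$ is split by $c$ sets, the very same partition is a proper $c$--colouring of $(N,A)$; conversely, a proper $c$--colouring $V_0, \ldots, V_{c-1}$ of $(N,A)$ is a partition of $N$ into $c$ sets with $A\restrict V_k = \emptyset$ for all $k$, so it splits $A$. In each direction one simply reads the same partition out of one definition and into the other, so I would present the argument in a single short paragraph doing both directions at once.

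The only point that needs a moment's care --- and it is the ``main obstacle'' only in the loosest sense --- is checking that the two notions of ``partition of $N$ into $c$ pieces'' used in the definitions of splitting and of $c$--colorability really coincide: in particular that both conventions allow empty classes and both treat $V_0, \ldots, V_{c-1}$ as an indexed family of length $c$, so that translating a colouring into a splitting, or back, never loses or gains a class. Once this bookkeeping is noted, nothing further is required.
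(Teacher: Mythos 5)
Your proof is correct; the paper offers no proof for this observation precisely because, as you note, the paper's definition of $c$--colorability is phrased with the identical condition $(\forall k \in c)(E\restrict V_k=\emptyset)$ used to define splitting, so the two notions coincide by definition when $V=N$ and $E=A$. Your careful remark that $a\subseteq V_k$ for an edge $a=\{a_0,a_1\}$ means both endpoints lie in $V_k$ is the right (and only) observation needed.
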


\begin{definition}
	We say a graph $G$ has chromatic number $c$ if it is $c$-colorable, but not
	$c-1$--colorable. 
\end{definition}

\subsection{The Norm of Specific Sets}

\begin{proposition}
	For any $k \in N$, the set $\{ a \in \pow(N) : a \text{ is a } k\text{-gon}
	\}$ can be split by $\min\{\lceil \frac{N}{k-1} \rceil, \lfloor \frac{N}{k}
	+ 1 \rfloor\}$ sets. 
\end{proposition}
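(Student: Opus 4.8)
The plan is to exhibit two explicit partitions of $N$ realizing the two quantities in the minimum, and then take whichever is smaller. The key observation is that a set $V_i$ of the partition ``kills'' all $k$-gons contained in it precisely when $|V_i|\le k-1$: if $|V_i|\ge k$ then $V_i$ itself (or any $k$-subset of it) is a $k$-gon living entirely inside $V_i$, so $A\restrict V_i\ne\emptyset$; conversely if $|V_i|\le k-1$ no $k$-gon fits inside $V_i$, so $A\restrict V_i=\emptyset$. Hence $A=\{a\in\pow(N):a$ is a $k$-gon$\}$ is split by $V_0,\dots,V_{c-1}$ if and only if every block has size at most $k-1$.

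First I would handle the bound $\lceil N/(k-1)\rceil$. Partition $N=\{0,1,\dots,N-1\}$ into consecutive intervals of length $k-1$ (the last one possibly shorter), giving $\lceil N/(k-1)\rceil$ blocks, each of size $\le k-1$. By the observation above this splits $A$, so $A$ can be split by $\lceil N/(k-1)\rceil$ sets, and by the monotonicity already noted (adding more blocks to a splitting partition keeps it a splitting partition — or simply: if $A$ can be split by $c$ sets it can be split by any $c'\ge c$ by splitting one block, which only shrinks blocks) it can be split by any larger number as well.

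Next I would handle the bound $\lfloor N/k+1\rfloor=\lfloor N/k\rfloor+1$. Here the idea is slightly different: with $m:=\lfloor N/k\rfloor+1$ blocks we cannot always make every block have size $\le k-1$ (e.g.\ when $N=mk$ is impossible but $N$ close to $m(k-1)$ may fail), so instead I argue directly that $A$ admits a splitting into $m$ blocks whenever $m\ge N/(k-1)$ is \emph{not} required — wait: in fact the cleanest route is to observe $\lfloor N/k\rfloor+1>N/k\ge$ \ldots this does not immediately give block sizes $\le k-1$. The honest approach: distribute $N$ elements as evenly as possible into $m=\lfloor N/k\rfloor+1$ blocks; the largest block then has size $\lceil N/m\rceil$, and since $m\ge N/k$ we get $\lceil N/m\rceil\le\lceil k\rceil=k$, which is not quite $\le k-1$. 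So the bound $\lfloor N/k\rfloor+1$ must come from the \emph{other} direction — it is an upper bound on the split number only when it is $\ge\lceil N/(k-1)\rceil$, i.e.\ the $\min$ in the statement is genuinely attained by the first term in most regimes, and the second term $\lfloor N/k\rfloor+1$ is the relevant one only in the edge case where $k-1\mid N$ and then $N/(k-1)=\lfloor N/k\rfloor+1$ may or may not hold; I would check that when $\lfloor N/k\rfloor+1<\lceil N/(k-1)\rceil$ one can still split using an uneven partition where exactly the right number of blocks have size $k-1$ and the rest smaller, verifying the arithmetic $k\cdot(\text{small count})+(k-1)\cdot(\text{full count})\le\cdots$ closes.

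The main obstacle is precisely this second bound: unlike the first, it is not realized by a ``uniform interval'' partition, and the proof requires a careful counting argument showing that $\lfloor N/k\rfloor+1$ blocks suffice — essentially, that one can choose $r$ blocks of size $k-1$ and the remainder split into blocks of size $\le k-1$ with the block count not exceeding $\lfloor N/k\rfloor+1$. I would phrase this as: if $N=q(k-1)+s$ with $0\le s<k-1$ then $\lceil N/(k-1)\rceil=q$ or $q+1$, compare with $\lfloor N/k\rfloor+1$, and in each residue case produce the explicit partition. Once both partitions are in hand, the proposition follows immediately by taking the one with fewer blocks.
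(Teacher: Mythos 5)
Your first paragraph is exactly right and in fact settles the question completely: a block $V_i$ satisfies $A\restrict V_i=\emptyset$ if and only if $|V_i|\leq k-1$, so a partition of $N$ into $m$ blocks splits the set of all $k$-gons if and only if every block has size at most $k-1$, which forces $m(k-1)\geq N$, i.e.\ $m\geq\lceil\frac{N}{k-1}\rceil$; conversely the interval partition achieves this, so $\lceil\frac{N}{k-1}\rceil$ is both sufficient and necessary. The gap is in your treatment of the second term of the minimum: you correctly notice that an even distribution into $\lfloor\frac{N}{k}\rfloor+1$ blocks only guarantees block sizes $\leq k$, but you then promise an ``uneven partition'' and a ``careful counting argument'' that will make $\lfloor\frac{N}{k}\rfloor+1$ blocks work when this number is smaller than $\lceil\frac{N}{k-1}\rceil$. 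No such argument can exist: your own criterion shows that \emph{every} splitting partition has at least $\lceil\frac{N}{k-1}\rceil$ blocks, so whenever $\lfloor\frac{N}{k}\rfloor+1<\lceil\frac{N}{k-1}\rceil$ the claimed bound is unattainable. Concretely, for $N=3$, $k=2$ the proposition asserts that the set of all edges on three vertices can be split by $\min\{3,2\}=2$ sets, i.e.\ that a triangle is $2$--colorable, which contradicts Example \ref{examples}(2) of this very paper; for $N=12$, $k=3$ it asserts that $5$ blocks suffice, yet $5$ blocks on $12$ points force some block of size at least $3$, which then contains a $3$-gon.

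The proposition (and the remark after it, which says the minimum ``reduces to $\lfloor\frac{N}{k}+1\rfloor$'') is therefore false as stated, and the paper's own proof commits the very slip you almost talked yourself into: ``removing the ceiling operator'' from $\lceil\frac{N}{n}\rceil<k$ gives $\frac{N}{n}\leq k-1$, i.e.\ $n\geq\frac{N}{k-1}$, not $n>\frac{N}{k}$. The correct statement, which your first paragraph already proves in full, is that the set of $k$-gons can be split by $\lceil\frac{N}{k-1}\rceil$ sets and by no fewer. You should have trusted the obstruction you identified and concluded that the second term of the minimum must be discarded, rather than deferring to a counting argument that cannot be completed.
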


\begin{proof}
	To ensure every $k$-gon is split, we must make sure every set contains fewer
	than $k$ elements in it. By the pigeon hole principle, if there are $n$
	sets, at least one must have at least $\lceil\frac{N}{n}\rceil$ elements. So
	we need $\lceil \frac{N}{n} \rceil < k$, removing the ceiling operator and
	rearranging, we must have $n > \frac{N}{k}$. We can guarantee this with $n =
	\min\{\lceil \frac{N}{k-1} \rceil, \lfloor \frac{N}{k} + 1 \rfloor\}$. 
\end{proof}

\begin{remark}
	As we are working with $2 \leq k \leq N$, this reduces to just $\lfloor \frac{N}{k} + 1 \rfloor$.
\end{remark}

\begin{proposition}
	Given an arbitrary set $A\subseteq P$ and a vertex $v\in N$, if we create a
	set $B = \{ a \in A : v \notin a \}$, then we know $\norm_3(B) \geq \norm_3(A)
	-1$. 
\end{proposition}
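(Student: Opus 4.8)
The plan is to prove, by induction on $n$, the following uniform statement: for every $n\in\omega$, every $A\subseteq P$, and every $v\in N$, if $\norm_3(A)\geq n+1$ then $\norm_3(B)\geq n$, where $B=\{a\in A:v\notin a\}$. The proposition is the instance in which $\norm_3(A)=n+1$. Carrying the quantifier over $A$ and $v$ along in the induction is essential, since the inductive step will want to invoke the hypothesis on a restriction $A\restrict z$ rather than on $A$ itself.

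I would dispatch the base cases first. For $n=0$ there is nothing to prove, since $\norm_3$ is always nonnegative. For $n=1$, recall that every $a\in P$ has $|a|\geq 2$, so $A\restrict\{v\}=\emptyset$; applying the defining clause for $\norm_3(A)\geq 2$ to the subset $z=\{v\}$ then forces $\norm_3\big(A\restrict(N\bs\{v\})\big)\geq 1$, i.e.\ $A\restrict(N\bs\{v\})\neq\emptyset$. Since $A\restrict(N\bs\{v\})=\{a\in A:v\notin a\}=B$, this yields $\norm_3(B)\geq 1$.

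For the inductive step I would assume the statement for some $k\geq 1$ and suppose $\norm_3(A)\geq k+2$; the goal is $\norm_3(B)\geq k+1$, which (since $k+1\geq 2$) amounts to showing that for every $z\subseteq N$ either $\norm_3(B\restrict z)\geq k$ or $\norm_3\big(B\restrict(N\bs z)\big)\geq k$. Fix such a $z$; because this conclusion is symmetric in $z$ and $N\bs z$, I may assume $v\in z$. From $\norm_3(A)\geq k+2=(k+1)+1$ we get $\norm_3(A\restrict z)\geq k+1$ or $\norm_3\big(A\restrict(N\bs z)\big)\geq k+1$. In the second case, every subset of $N\bs z$ omits $v$, so $A\restrict(N\bs z)=B\restrict(N\bs z)$, and we are done. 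In the first case, apply the inductive hypothesis to the set $A\restrict z\subseteq P$ together with the vertex $v$: it gives $\norm_3\big(\{a\in A\restrict z:v\notin a\}\big)\geq k$, and since $\{a\in A\restrict z:v\notin a\}=\{a\in A:a\subseteq z\wedge v\notin a\}=B\restrict z$, we conclude $\norm_3(B\restrict z)\geq k$.

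I do not anticipate a serious obstacle; the only point requiring care is the bookkeeping with restrictions, namely the two identities $A\restrict(N\bs z)=B\restrict(N\bs z)$ (which uses $v\in z$) and $\{a\in A\restrict z:v\notin a\}=B\restrict z$ (valid for every $z$), together with keeping the induction hypothesis phrased uniformly in $A$. As an alternative route avoiding the induction altogether, one can observe that if $B$ is split by a partition $V_0,\dots,V_{c-1}$ of $N$ with $v\in V_0$, then $\{v\},V_0\bs\{v\},V_1,\dots,V_{c-1}$ splits $A$, so $A$ is split by at most $c+1$ sets; feeding this into the corollary relating splitting numbers to values of $\norm_3$ gives $\norm_3(A)\leq\norm_3(B)+1$ directly.
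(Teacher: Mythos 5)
Your argument is correct, but you have taken a substantially longer road than necessary, and the paper's proof is in fact hiding inside your own base case. The paper simply observes that for $\norm_3(A)=k+1$ with $k>0$, instantiating the defining clause at $z=\{v\}$ gives either $\norm_3(A\restrict \{v\})\geq k$ or $\norm_3(A\restrict(N\bs\{v\}))\geq k$; since every element of $P$ has at least two elements, $A\restrict\{v\}=\emptyset$ has norm $0<k$, so the second disjunct must hold, and $A\restrict(N\bs\{v\})$ is literally the set $B$. That is the entire proof. Your $n=1$ base case is exactly this argument, and nothing in it uses $n=1$: the same single instantiation works for every $k>0$, so the induction (and the careful bookkeeping with $A\restrict z$ versus $B\restrict z$ and the ``WLOG $v\in z$'' symmetry) is sound but superfluous. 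Your closing alternative via splitting numbers --- refining a $c$-set splitting of $B$ to a $(c+1)$-set splitting of $A$ by carving out $\{v\}$, then invoking the splitting/norm correspondence --- is a genuinely different and valid route; it buys a combinatorial picture of why the norm drops by at most one, at the cost of routing through the splitting theorems, whereas the direct argument needs only the definition.
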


\begin{proof}
	The proposition is trivially true for $\norm_3(A)\leq 1$, so suppose
	$\norm_3(A)=k+1$, $k>0$. We know that $\norm_3(A) \geq k+1$ if and only if for
	every $z \subseteq N$ we have either $\norm_3(A\restrict z) \geq k$ or
	$\norm_3(A\restrict (N\bs z)) \geq k$. Taking $z = \{v\}$, we get $A\restrict
	z = \emptyset$, and it must be that $\norm_3(A\restrict (N\bs z)) \geq k$. It
	is also clear that $B = A\restrict (N\bs z)$. 
\end{proof}

\begin{proposition}
	Given a vertex $v\in N$, then $\norm_3\big(\{ a \in P : v \in a \}\big)=1$.   
\end{proposition}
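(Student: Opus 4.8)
The plan is to establish the two halves of the claimed equality separately: $\norm_3(A)\geq 1$ and $\neg\big(\norm_3(A)\geq 2\big)$, where $A=\{a\in P:v\in a\}$. By the definition of when $\norm_3(A)=n$, these two facts together give $\norm_3(A)=1$.

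For the lower bound I would argue that $A$ is non-empty: assuming $N\geq 2$ (which is implicit in the running graph interpretation of this section), choose any $w\in N$ with $w\neq v$; then $\{v,w\}$ has size $2$ and contains $v$, so $\{v,w\}\in A$. By clause (2) of the definition of $\norm_3$, $A\neq\emptyset$ gives $\norm_3(A)\geq 1$.

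For the upper bound I would exhibit a single set $z\subseteq N$ witnessing that clause (3) fails for $n=1$. Take $z=\{v\}$. Then $A\restrict z=\{a\in A:a\subseteq\{v\}\}=\emptyset$, since every element of $A$ has size at least $2$; and $A\restrict(N\bs z)=\{a\in A:a\subseteq N\bs\{v\}\}=\emptyset$, since every element of $A$ contains $v$. Hence neither $\norm_3(A\restrict z)\geq 1$ nor $\norm_3(A\restrict(N\bs z))\geq 1$, so it is not the case that $\norm_3(A)\geq 2$. Combining with the previous paragraph, $\norm_3(A)=1$.

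There is no real obstacle in this argument; the only thing to watch is the degenerate situation $N<2$, where $P$ and hence $A$ are empty and $\norm_3(A)=0$. I would handle this simply by noting the standing assumption that $N$ is large enough for the graph setting (equivalently, $N\geq 2$), rather than splitting into a separate case.
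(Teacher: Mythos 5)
Your proof is correct and follows essentially the same route as the paper: non-emptiness gives $\norm_3(A)\geq 1$, and the witness $z=\{v\}$ kills clause (3) at $n=1$ since elements of $A$ have size at least $2$ and all contain $v$. Your added remark about the degenerate case $N<2$ is a reasonable bit of extra care but not a substantive difference.
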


\begin{proof}
	Clearly the set $B=\{ a \in A : v \in a \}$ is not empty. Let $z = \{ v
	\}$. As every element of $B$ contains at least two elements, it must be that
	$B \restrict z = \emptyset$, but as every element of $B$ contains $v$, it
	must be that $B\restrict (N\bs z) = \emptyset$. Consequently, $\norm_3(B)=
	1$.  
\end{proof}

\begin{remark}
	Notice that these propositions show that $\norm_3$ and $\norm_2$ have a poor relationship, that is, if $\norm_3(A)$ and $\norm_2(A)$ are both defined, $\norm_3(A)$ may be large while $\norm_2(A)$ is small, or vise-versa.
\end{remark}

\subsection{Relationship with the Standard Norm}

\begin{theorem}
	Given an arbitrary set $A \subseteq P$ such that $\norm_3(A) = k$, we know that
	$|A| \geq {2^{k-1} + 1 \choose 2}$. 
\end{theorem}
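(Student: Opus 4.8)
We may assume $k\geq 1$, since for $k=0$ we have $A=\emptyset$. The plan is to reduce the statement to a fact about the chromatic number of an ordinary graph and then to invoke a standard edge-counting bound.

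\emph{Reduction to graphs.} First I would pass from $A$ to a graph. Form $A'\subseteq P$ from $A$ by replacing every polygon in $A$ by one of its edges, leaving the edges of $A$ untouched. Then $A'$ consists entirely of edges, the natural map $A\to A'$ just described is onto, so $|A'|\leq|A|$, and by Corollary~\ref{edge-ineq} we have $\norm_3(A')\geq\norm_3(A)=k$. Since $\norm_3(A')\geq k$, the contrapositive of the theorem asserting that a set split by $2^n$ sets has norm at most $n$ shows that $A'$ cannot be split by $2^{k-1}$ sets; and since $A'$ is a set of edges, the observation identifying ``split by $c$ sets'' with ``$c$-colorable'' shows that the graph $(N,A')$ is not $2^{k-1}$-colorable, i.e.\ its chromatic number is at least $2^{k-1}+1$.

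\emph{The graph lemma.} It then remains to prove: a graph $G$ with chromatic number at least $m$ has at least ${m\choose2}$ edges. I would argue by induction on $|V(G)|$. If $G$ has a vertex $v$ of degree at most $m-2$, then $G-v$ still has chromatic number at least $m$ --- otherwise a proper $(m-1)$-coloring of $G-v$ would leave a free color for $v$, since $v$ sees at most $m-2$ colors, contradicting $\chi(G)\geq m$ --- so the induction hypothesis gives $|E(G)|\geq|E(G-v)|\geq{m\choose2}$. If instead every vertex of $G$ has degree at least $m-1$, then $G$ has at least $m$ vertices (a graph on fewer than $m$ vertices is $(m-1)$-colorable), so $2|E(G)|=\sum_{v}\deg(v)\geq m(m-1)$, whence $|E(G)|\geq{m\choose2}$.

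\emph{Conclusion.} Applying the lemma with $m=2^{k-1}+1$ to the graph $(N,A')$ yields $|A'|\geq{2^{k-1}+1\choose2}$, and therefore $|A|\geq|A'|\geq{2^{k-1}+1\choose2}$.

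The only genuine obstacle is the graph lemma, and even that is standard; the remaining work is to make sure the two reductions line up exactly with the tools already proved (Corollary~\ref{edge-ineq} and the split/colorability dictionary), and to observe that the small cases --- for instance $k=1$, where the claim just reads $|A|\geq 1$ --- are already covered by the general argument.
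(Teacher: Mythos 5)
Your proposal is correct and follows essentially the same route as the paper: reduce $A$ to a set of edges via Corollary \ref{edge-ineq}, note that the resulting graph cannot be split by (i.e.\ is not colorable with) $2^{k-1}$ sets, and invoke the fact that a graph of chromatic number at least $m$ has at least $\binom{m}{2}$ edges. The only difference is that you actually prove that last fact (by the minimum-degree/vertex-deletion induction), whereas the paper merely asserts that the extremal example is the complete graph on $2^{k-1}+1$ vertices, so your write-up is more complete on the one nontrivial point.
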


\begin{proof}
	The smallest $A$ can be to achieve a particular value of the graph coloring
	norm is when the elements of $A$ are all edges. This is a consequence of
	corollary \ref{edge-ineq}. Therefore, we will assume every element of $A$ is
	an edge, and it must be that $A$ cannot be split by at most $2^{k-1}$
	sets. The smallest set $A$ that cannot be split by $2^{k-1}$ sets is when
	$(N,A)$ forms a complete graph on $2^{k-1}+1$ vertexes. Therefore we have ${2^{k-1}+1 \choose 2}$ edges in $A$. 
\end{proof}

\begin{theorem}
	Given an arbitrary set $A \subseteq P$ where $\norm_3(A) = k$ it must be that
	$|A| \leq 2^N - 2^k2^{\frac{N}{2^k} }+ 2^k - 1$. 
\end{theorem}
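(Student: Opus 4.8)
The plan is to convert the hypothesis into a splitting statement and then do a counting argument. Since $\norm_3(A)=k$, in particular $\norm_3(A)\le k$, so by the contrapositive of the theorem asserting that a set which cannot be split by $2^n$ sets has $\norm_3>n$, there is a partition $V_0,V_1,\ldots,V_{2^k-1}$ of $N$ that splits $A$. Put $m_j=|V_j|$, so $\sum_{j}m_j=N$. Every $a\in A$ satisfies $|a|\ge 2$ and, because $A\restrict V_j=\emptyset$, also $a\nsubseteq V_j$ for each $j$. Hence $A$ is contained in the family of all subsets of $N$ of size at least $2$ that lie inside no single block $V_j$, and it is enough to bound the cardinality of that family.

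The counting step goes as follows. The number of subsets of $N$ of size at least $2$ is $2^N-N-1$. A subset of size at least $2$ which is contained in some block is contained in exactly one block, since the blocks are pairwise disjoint; thus the number of such ``bad'' subsets is $\sum_{j=0}^{2^k-1}\bigl(2^{m_j}-m_j-1\bigr)=\sum_{j}2^{m_j}-N-2^k$. Subtracting, we obtain
\[ |A|\ \le\ (2^N-N-1)-\Bigl(\sum_{j}2^{m_j}-N-2^k\Bigr)\ =\ 2^N+2^k-1-\sum_{j=0}^{2^k-1}2^{m_j}. \]

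To reach the stated closed form I would now bound $\sum_j 2^{m_j}$ from below. Because $t\mapsto 2^t$ is convex, Jensen's inequality applied to the $2^k$ numbers $m_0,\ldots,m_{2^k-1}$, whose average is $N/2^k$, gives $\sum_j 2^{m_j}\ge 2^k\cdot 2^{N/2^k}$. Substituting this into the displayed inequality yields $|A|\le 2^N-2^k 2^{N/2^k}+2^k-1$, which is exactly the claim. I do not expect a serious obstacle; the two points needing care are the disjointness remark that rules out double counting in the inclusion step, and observing that the convexity estimate is tight precisely when $2^k\mid N$ (all blocks of equal size $N/2^k$), so the bound cannot in general be improved.
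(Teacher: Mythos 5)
Your proposal is correct and follows the same overall strategy as the paper: use the splitting theorem to obtain a partition $V_0,\ldots,V_{2^k-1}$ of $N$ that splits $A$, bound $|A|$ by counting the subsets of size at least $2$ contained in no single block (arriving at the same intermediate bound $|A|\leq 2^N+2^k-1-\sum_j 2^{m_j}$), and then optimize over the block sizes. The one genuine difference is the last step: the paper interprets the bound as a real-valued function and locates its maximum by setting the gradient to zero and spot-checking corner points, whereas you minimize $\sum_j 2^{m_j}$ directly by convexity of $t\mapsto 2^t$ via Jensen's inequality. Your version is cleaner and arguably more rigorous, since a vanishing gradient plus a comparison with a few boundary points does not by itself certify a global maximum; concavity (equivalently, your convexity argument) is what actually closes that step, and it also makes transparent that the bound is attained exactly when all blocks have size $N/2^k$.
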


\begin{proof}
	For every $n$ there are ${N \choose n}$ possible $n$-gons. Because $\norm_3(A)
	= k$, we know that some partition $N=V_0\cup V_1\cup\ldots\cup V_{2^{k}-1}$
	splits $A$ . For convenience, we will say $x_0 = |V_0|,\ x_1 = |V_1|,\
	\ldots ,\ x_{2^k-1} = |V_{2^k - 1}|$. As $A$ can be split by $2^k$ sets, we
	have over counted the number of possible $n$-gons, considering this, there
	are ${N\choose n} - \sum\limits_{l = 0}^{2^k -1} {x_l \choose n}$ possible
	$n$--gons in $A$. Summing over all $n$ and keeping the convention that
	${x_l\choose n}=0$ for $n>x_l$, we get $|A| \leq \sum\limits_{n=2}^N
	\Big({N\choose n} - \sum\limits_{l = 0}^{2^k - 1}{x_l  \choose n}\Big)$. One
	can verify that this simplifies to 
	\[|A| \leq 2^N-N-1+ \sum\limits_{l = 0}^{2^k - 1}(1 + x_l - 2^{x_l}),\] 
	and as $N = x_0 + x_1 +\ldots + x_{2^k-1}$, this reduces to $|A| \leq
	2^N+2^k-1-\sum\limits_{l =0}^{2^k-1}2^{x_l}$. 
	
	Interpreting this as a function on real numbers
	\[h(x_0,x_1,\ldots, x_{2^k-1}) = 2^N+2^k-1-\sum\limits_{l =
		0}^{2^k-1}2^{x_l},\] 
	we know from calculus that if this function has a maximum, it occurs when
	the gradient of the function is zero. Taking $x_{2^k - 1} = N - x_0 - x_1 -
	\ldots - x_{2^k - 2}$, and applying the partial derivative to any of the
	other variables, we get $\frac{\partial h}{\partial x_l} = - \ln(2)2^{x_l} +
	\ln(2)2^{N-x_0-x_1-\ldots-x_l-\ldots-x_{2^k-2}}$. The partial derivative is
	zero only when $x_l = N - x_0 - x_1 - \ldots - x_l - \ldots -x_{2^k-2} =
	x_{2^k - 1}$. Furthermore, this holds for all $l < 2^k$, which gives us
	$\textrm{grad}(h) = 0$ when $x_0 = x_1 = \ldots = x_{2^k-1} =
	\frac{N}{2^k}$. To verify that this is the maximum, we test points where for some $m$ we have $x_m = N$ and $x_l = 0$ for $l \neq m$. We get $h(0,0, \ldots, N,\ldots,0) = 0$, and 
	when the gradient is zero, we get $h(\frac{N}{2^k},\ldots,\frac{N}{2^k}) =
	2^N - 2^k2^{\frac{N}{2^k} }+ 2^k - 1$. Consequently, we have $|A| \leq 2^N -
	2^k2^{\frac{N}{2^k} }+ 2^k - 1$.  
\end{proof}

\section{Hall Norms}
Interesting norms were introduced by Roslanowski and Shelah
\cite{RoSh:628, RoSh:672} to construct ccc forcing notions. They have quite
special properties allowing ``gluing and cutting'', cf. Propositions
\ref{13X} and \ref{13B} below.  

In this section, we will assume we are working with a fixed natural number
$N$.  Also, the symbol ``$\restriction$'' is used here to denote the operation
of the restriction of functions. 

\subsection{Definitions and Basic Properties}

\begin{definition}
	\begin{enumerate}
		\item We define $^{\underline{N}}2$ to be the set of all partial functions
		$\sigma$ where  
		\begin{enumerate}
			\item $\dom(\sigma) \subseteq N$, and 
			\item $\sigma: \dom(\sigma) \longrightarrow \{0,1\}$
		\end{enumerate}
		Note that we do include the empty function in $\partialfn$.
		\item The set of all total functions $f : N \longrightarrow \{0,1\}$ is
		denoted by ${^N2}$.
		\item For any two functions $\sigma, \rho \in \partialfn$ we say {\em $\sigma$
			extends $\rho$} if $\rho \subseteq \sigma$. 
		\item For any function $\sigma \in \partialfn$, we define 
		\[ [\sigma] = \{ f \in {^N2} : f \text{ extends } \sigma \} \]
	\end{enumerate}
\end{definition}

\begin{proposition}
	For any two functions $\sigma_1,\sigma_2 \in \partialfn$, if there is an $n
	\in \dom(\sigma_1)$ such that $n \in \dom(\sigma_2)$ and $\sigma_1(n) \neq
	\sigma_2(n)$, then $[\sigma_1]\cap [\sigma_2] = \emptyset$. Otherwise
	$\sigma_1\cup \sigma_2\in \partialfn$ and 
	$$[\sigma_1]\cap [\sigma_2] = [\sigma_1 \cup \sigma_2].$$
\end{proposition}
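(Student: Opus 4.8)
The plan is to split into the two cases of the dichotomy exactly as stated. \textbf{Case 1:} suppose there is $n\in\dom(\sigma_1)\cap\dom(\sigma_2)$ with $\sigma_1(n)\neq\sigma_2(n)$. I would argue by contradiction: if $f\in[\sigma_1]\cap[\sigma_2]$, then $f$ extending $\sigma_1$ forces $f(n)=\sigma_1(n)$, while $f$ extending $\sigma_2$ forces $f(n)=\sigma_2(n)$; since $f$ is a function these two values coincide, contradicting $\sigma_1(n)\neq\sigma_2(n)$. Hence $[\sigma_1]\cap[\sigma_2]=\emptyset$.

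\textbf{Case 2:} suppose instead that for every $n\in\dom(\sigma_1)\cap\dom(\sigma_2)$ we have $\sigma_1(n)=\sigma_2(n)$. The first thing to establish is that $\sigma_1\cup\sigma_2\in\partialfn$. Using the set-of-ordered-pairs definition of a function from the Notation section, I would take $(x,y)\in\sigma_1\cup\sigma_2$ and $(x,z)\in\sigma_1\cup\sigma_2$; if both pairs lie in the same $\sigma_i$ then $y=z$ because $\sigma_i$ is a function, and if, say, $(x,y)\in\sigma_1$ and $(x,z)\in\sigma_2$, then $x\in\dom(\sigma_1)\cap\dom(\sigma_2)$, so $y=\sigma_1(x)=\sigma_2(x)=z$ by the Case 2 hypothesis. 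Its domain is $\dom(\sigma_1)\cup\dom(\sigma_2)\subseteq N$ and its range is contained in $\{0,1\}$, so $\sigma_1\cup\sigma_2\in\partialfn$ as required.

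It then remains to verify the set equality $[\sigma_1]\cap[\sigma_2]=[\sigma_1\cup\sigma_2]$, which I would do by double inclusion using only the fact that ``extends'' means containment of graphs. For $\supseteq$: if $f$ extends $\sigma_1\cup\sigma_2$ then $\sigma_1\subseteq\sigma_1\cup\sigma_2\subseteq f$ and $\sigma_2\subseteq\sigma_1\cup\sigma_2\subseteq f$, so $f\in[\sigma_1]\cap[\sigma_2]$. For $\subseteq$: if $f$ extends both $\sigma_1$ and $\sigma_2$ then $\sigma_1\subseteq f$ and $\sigma_2\subseteq f$, hence $\sigma_1\cup\sigma_2\subseteq f$, i.e.\ $f\in[\sigma_1\cup\sigma_2]$.

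I do not expect any genuine obstacle here; the only point requiring a moment of care is the single-valuedness check for $\sigma_1\cup\sigma_2$ in Case 2, which is precisely where the hypothesis that $\sigma_1$ and $\sigma_2$ agree on the common part of their domains is used. Everything else is bookkeeping with the definitions of $[\sigma]$ and of ``extends''.
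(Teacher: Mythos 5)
Your proof is correct and follows essentially the same route as the paper's: unwind the definition of $[\sigma]$ as graph containment, observe the clash at $n$ in the first case, and identify extending both $\sigma_1$ and $\sigma_2$ with extending $\sigma_1\cup\sigma_2$ in the second. Your explicit single-valuedness check for $\sigma_1\cup\sigma_2$ is a small point the paper leaves implicit, but it is not a different argument.
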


\begin{proof}
	By definition, we have
	$$[\sigma_1] = \{ f \in {^N2} : \sigma_1 \subseteq f \} = \{ f \in {^N2} :
	(\forall a \in \sigma_1) a \in f \}.$$ 
	Similarly, we have $[\sigma_2] = \{ f : (\forall a \in \sigma_2) a \in f
	\}$. Combining these, 
	$$[\sigma_1]\cap[\sigma_2] = \{ f: (\forall a \in \sigma_1) a \in f \land
	(\forall a \in \sigma_2) a \in f \}.$$ 
	If for some $n$, we have $n\in \dom(\sigma_1)$, $n \in \dom(\sigma_2)$, and
	$\sigma_1(n) \neq \sigma_2(n)$, then there is no function $f$ that can
	satisfy both $(n,\sigma_1(n))\in f$ and $(n, \sigma_2(n))\in f$. Therefore
	$[\sigma_1]\cap[\sigma_2] = \emptyset$. Otherwise, if there is no such $n$,
	then $$[\sigma_1] \cap [\sigma_2] = \{ f: (\forall a \in (\sigma_1\cup
	\sigma_2)) a \in f \} = [\sigma_1 \cup \sigma_2].$$ 
\end{proof}

\begin{definition}
	For $A \subseteq {^N2}$, we define 
	\[ \Delta_N(A) = \{ \sigma \in \partialfn: [\sigma] \cap A = \emptyset \land
	(\forall \rho \subsetneq \sigma) [\rho]\cap A \neq \emptyset \} \] 
	When $N$ is understood through context, we may simply write $\Delta(A)$ for
	$\Delta_N(A)$.  
\end{definition}

\begin{lemma} \label{subset delta}
	For any set $A \subseteq {^N2}$, if $\sigma \in \partialfn$ is such that
	$[\sigma]\cap A = \emptyset$, then there is a $\rho \subseteq \sigma$ such
	that $\rho\in \Delta(A)$. 
\end{lemma}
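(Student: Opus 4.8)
The plan is to prove this by descending through the subset order on partial functions. Given $\sigma \in \partialfn$ with $[\sigma] \cap A = \emptyset$, I want to find some $\rho \subseteq \sigma$ that is minimal with respect to this property, i.e. $[\rho] \cap A = \emptyset$ but every proper subfunction of $\rho$ fails to be disjoint from $A$. Since $\dom(\sigma) \subseteq N$ is finite, the collection of $\tau \subseteq \sigma$ with $[\tau] \cap A = \emptyset$ is a finite nonempty family (it contains $\sigma$ itself), so it has an element $\rho$ of minimal domain size.

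The key step is then to verify that this minimal $\rho$ actually lies in $\Delta(A)$. By construction $[\rho] \cap A = \emptyset$, so I need $(\forall \tau \subsetneq \rho)\, [\tau] \cap A \neq \emptyset$. Suppose not: some $\tau \subsetneq \rho$ has $[\tau] \cap A = \emptyset$. Since $\tau \subsetneq \rho \subseteq \sigma$, we have $\tau \subseteq \sigma$ and $|\dom(\tau)| < |\dom(\rho)|$, contradicting the minimality of $\rho$. Hence $\rho \in \Delta(A)$, and $\rho \subseteq \sigma$ as required.

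I do not expect any real obstacle here; the only thing to be slightly careful about is the degenerate case $\rho = \emptyset$, which is allowed since the empty function is in $\partialfn$ and the condition $(\forall \tau \subsetneq \emptyset)(\ldots)$ is then vacuously true — so $\emptyset \in \Delta(A)$ exactly when $[\emptyset] \cap A = {^N2} \cap A = A = \emptyset$, which is consistent. The argument is essentially "a finite nonempty poset has a minimal element, and minimality is exactly what the definition of $\Delta(A)$ demands." An alternative phrasing would remove an element of $\dom(\sigma)$ at a time, repeatedly, as long as disjointness from $A$ is preserved; this terminates since $\dom(\sigma)$ is finite, and the endpoint is a member of $\Delta(A)$. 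Either formulation gives a short clean proof; I would go with the "minimal domain" version as it avoids writing out an explicit induction.
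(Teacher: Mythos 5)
Your proof is correct and follows essentially the same idea as the paper's: both locate a sub-function of $\sigma$ of minimal domain size among those $\tau\subseteq\sigma$ with $[\tau]\cap A=\emptyset$ and observe that minimality is exactly the second clause in the definition of $\Delta(A)$ (the paper just phrases this as an explicit upward search through sizes $1,2,\ldots,|\sigma|$). Your handling of the degenerate case $\rho=\emptyset$ matches the paper's opening remark, so nothing is missing.
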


\begin{proof}
	First note that $[\emptyset] = {^N2}$ and if $A$ is empty, then $\emptyset
	\in \Delta(A)$, otherwise we go through the following process. 
	\begin{enumerate}
		\item Construct the set $P_1 = \{ a \subseteq \sigma : |a| = 1 \}$. If there
		is an $a \in P_1$ such that $[a] \cap A = \emptyset$, then the proof is
		complete. 
		\item Otherwise, construct the set $P_2 = \{ a \subseteq \sigma : |a| = 2
		\}$. If there is an $a \in P_2$ such that $[a] \cap A = \emptyset$, then
		the proof is complete. 
		\item Otherwise $\ldots$
		\item Otherwise, construct the set $P_{|\sigma|} = \{ a \subseteq \sigma :
		|a| = |\sigma| \}$. We know $\sigma \in P_{|\sigma|}$ and $[\sigma] \cap A
		= \emptyset$, therefore the proof is complete. 
	\end{enumerate}
\end{proof}

\begin{proposition}
	Suppose we have two distinct sets $A, B \subseteq {^N2}$, then we know
	$\Delta(A) \neq \Delta(B)$. 
\end{proposition}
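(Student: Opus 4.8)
The plan is to show that the assignment $A\mapsto\Delta(A)$ is injective by reconstructing $A$ from $\Delta(A)$. Concretely, I will prove that for every $A\subseteq{^N2}$ one has
\[ {^N2}\bs A=\bigcup_{\sigma\in\Delta(A)}[\sigma], \]
so that $A={^N2}\bs\bigcup_{\sigma\in\Delta(A)}[\sigma]$ is determined by $\Delta(A)$ alone; the contrapositive of ``$\Delta(A)=\Delta(B)\Rightarrow A=B$'' is precisely the proposition.

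For the inclusion ``$\supseteq$'': if $f\in[\sigma]$ for some $\sigma\in\Delta(A)$, then $[\sigma]\cap A=\emptyset$ by the definition of $\Delta(A)$, and since $f\in[\sigma]$ this forces $f\notin A$, i.e. $f\in{^N2}\bs A$.

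For the inclusion ``$\subseteq$'', which is the heart of the argument, suppose $f\in{^N2}$ and $f\notin A$. Regard $f$ itself as a member of $\partialfn$, namely the partial function with $\dom(f)=N$. The only total function extending $f$ is $f$ itself, so $[f]=\{f\}$, and hence $[f]\cap A=\emptyset$ because $f\notin A$. Now Lemma~\ref{subset delta} applies and yields some $\rho\subseteq f$ with $\rho\in\Delta(A)$; since $f$ extends $\rho$ we get $f\in[\rho]\subseteq\bigcup_{\sigma\in\Delta(A)}[\sigma]$, as required. The boundary cases behave correctly under the same reasoning: $A=\emptyset$ gives $\Delta(\emptyset)=\{\emptyset\}$ with $[\emptyset]={^N2}$, and $A={^N2}$ gives $\Delta({^N2})=\emptyset$ with empty union.

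With the displayed identity in hand the proposition is immediate: if $\Delta(A)=\Delta(B)$, then applying the reconstruction formula to both sides gives $A=B$, so distinct $A,B$ must have distinct $\Delta$-sets. The only mildly delicate point is justifying that a total function may be treated as an element of $\partialfn$ so that Lemma~\ref{subset delta} becomes applicable to $\sigma=f$; once that is granted the rest is a routine unwinding of definitions, and I foresee no genuine obstacle.
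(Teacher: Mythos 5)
Your proof is correct. The verification of the reconstruction identity $ {^N2}\bs A=\bigcup_{\sigma\in\Delta(A)}[\sigma]$ is sound in both directions: the ``$\supseteq$'' direction is immediate from $[\sigma]\cap A=\emptyset$ for $\sigma\in\Delta(A)$, and the ``$\subseteq$'' direction correctly treats a total $f\notin A$ as an element of $\partialfn$ with $[f]=\{f\}$ and invokes Lemma~\ref{subset delta} to find $\rho\subseteq f$ in $\Delta(A)$ --- exactly the same move the paper itself makes, so there is no delicacy to worry about. Your route differs from the paper's in packaging rather than in substance. The paper argues directly: it takes $f\in B\bs A$ (without loss of generality), extracts $\sigma_0\subseteq f$ with $\sigma_0\in\Delta(A)$ via the same lemma, and then observes that $f\in[\sigma_0]\cap B$ forces $\sigma_0\notin\Delta(B)$, thereby exhibiting a concrete element separating the two $\Delta$-sets. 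You instead prove that $A$ is fully recoverable from $\Delta(A)$ and deduce injectivity as a corollary. Your identity is precisely the statement $A=D(\Delta(A))$, which the paper proves as a separate proposition later in the same section by an essentially identical argument; so your approach front-loads that stronger, reusable fact, while the paper's is shorter here and has the minor advantage of producing an explicit witness in the symmetric difference of $\Delta(A)$ and $\Delta(B)$.
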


\begin{proof}
	Without loss of generality, we can assume that there is an $f \in B$ such
	that $f\notin A$. As $f \notin A$, we know $[f] \cap A = \emptyset$, and for
	some $\sigma_0 \subseteq f$ we have $\sigma_0 \in \Delta(A)$. However, as $f
	\in B$ we know that for every $\sigma \subseteq f$ we have $f \in ([\sigma] \cap
	B)$ and therefore $\sigma_0 \notin \Delta(B)$. 
\end{proof}

\begin{definition}
	For any two sets $\delta_1, \delta_2 \subseteq \partialfn$, we say $\delta_1
	\preceq \delta_2$ if and only if 
	\[(\forall \sigma \in \delta_1)(\exists \rho \in\delta_2)( \rho \subseteq
	\sigma).\]  
\end{definition}

\begin{observation}
	The relation $\preceq$ is transitive (on subsets of $\partialfn$). 
\end{observation}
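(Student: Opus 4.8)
The plan is to simply unwind the definition of $\preceq$ and chain witnesses, using that ordinary set inclusion $\subseteq$ is transitive. Suppose $\delta_1,\delta_2,\delta_3 \subseteq \partialfn$ with $\delta_1 \preceq \delta_2$ and $\delta_2 \preceq \delta_3$; the goal is to establish $\delta_1 \preceq \delta_3$.

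First I would fix an arbitrary $\sigma \in \delta_1$. Applying the hypothesis $\delta_1 \preceq \delta_2$ to $\sigma$, I obtain some $\rho \in \delta_2$ with $\rho \subseteq \sigma$. Then, applying the hypothesis $\delta_2 \preceq \delta_3$ to this $\rho \in \delta_2$, I obtain some $\tau \in \delta_3$ with $\tau \subseteq \rho$. Composing the two inclusions gives $\tau \subseteq \rho \subseteq \sigma$, so $\tau \subseteq \sigma$.

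Since $\sigma \in \delta_1$ was arbitrary and I have produced $\tau \in \delta_3$ with $\tau \subseteq \sigma$, this shows $(\forall \sigma \in \delta_1)(\exists \tau \in \delta_3)(\tau \subseteq \sigma)$, i.e. $\delta_1 \preceq \delta_3$, which completes the argument. There is no real obstacle here: the statement reduces immediately to the transitivity of $\subseteq$ on $\partialfn$, and the only thing to be careful about is correctly quantifying — the witness $\rho$ produced at the first step must genuinely lie in $\delta_2$ so that the second hypothesis can be invoked on it, which it does by construction.
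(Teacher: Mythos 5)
Your argument is correct: chaining the two witnesses and using transitivity of $\subseteq$ is exactly the intended justification, and the paper itself states this as an observation with no written proof, so there is nothing to compare against. The one point you flagged — that the intermediate witness $\rho$ must lie in $\delta_2$ so the second hypothesis applies — is indeed the only thing to check, and your proof handles it correctly.
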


\begin{lemma}\label{add fn}
	For any set $A \subseteq {^N2}$ and any function $f \in {^N2}$ we know that
	\[\Delta(A\cup \{f\}) \preceq \Delta(A).\] 
\end{lemma}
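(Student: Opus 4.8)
The plan is to show $\Delta(A \cup \{f\}) \preceq \Delta(A)$ by unwinding the definition of $\preceq$: I must verify that for every $\sigma \in \Delta(A \cup \{f\})$ there is some $\rho \in \Delta(A)$ with $\rho \subseteq \sigma$. So fix an arbitrary $\sigma \in \Delta(A \cup \{f\})$. By definition this means $[\sigma] \cap (A \cup \{f\}) = \emptyset$, and in particular $[\sigma] \cap A = \emptyset$ (since $A \subseteq A \cup \{f\}$). That is exactly the hypothesis of Lemma~\ref{subset delta} applied to the set $A$: there exists $\rho \subseteq \sigma$ with $\rho \in \Delta(A)$. This $\rho$ is the required witness, so $\sigma$ has an extension-below in $\Delta(A)$, and since $\sigma$ was arbitrary we conclude $\Delta(A \cup \{f\}) \preceq \Delta(A)$.

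The argument is essentially a one-line reduction to Lemma~\ref{subset delta}, so there is no real obstacle; the only thing to be careful about is the direction of the containment in the hypothesis of that lemma. We need $[\sigma] \cap A = \emptyset$, and this is immediate from $[\sigma] \cap (A \cup \{f\}) = \emptyset$ together with $A \subseteq A\cup\{f\}$, which gives $[\sigma]\cap A \subseteq [\sigma]\cap(A\cup\{f\}) = \emptyset$. Note that we do not even need the minimality clause ``$(\forall \rho' \subsetneq \sigma)\,[\rho']\cap(A\cup\{f\}) \neq \emptyset$'' in the definition of $\Delta(A\cup\{f\})$ — only the emptiness clause $[\sigma]\cap(A\cup\{f\})=\emptyset$ is used. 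The relation $\preceq$ does not require the witness $\rho$ to be a proper subset of $\sigma$ (the case $\rho = \sigma$ is allowed), which matches the conclusion of Lemma~\ref{subset delta}, where $\rho \subseteq \sigma$ and possibly $\rho = \sigma$ (this happens precisely when $\sigma$ is already $\subseteq$-minimal among partial functions with empty trace on $A$, e.g. when $\sigma$ itself lies in $\Delta(A)$).

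Thus the full proof is: fix $\sigma \in \Delta(A\cup\{f\})$; observe $[\sigma]\cap A = \emptyset$; apply Lemma~\ref{subset delta} to get $\rho \subseteq \sigma$ with $\rho \in \Delta(A)$; conclude. I would write this out in three or four sentences with no displayed equations needed.
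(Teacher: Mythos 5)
Your proof is correct and follows essentially the same route as the paper's: both reduce the claim to Lemma~\ref{subset delta} via the observation that $\sigma\in\Delta(A\cup\{f\})$ gives $[\sigma]\cap A\subseteq[\sigma]\cap(A\cup\{f\})=\emptyset$. The only cosmetic difference is that the paper first splits off the elements of $\Delta(A\cup\{f\})$ already lying in $\Delta(A)$, whereas you handle all $\sigma$ uniformly since Lemma~\ref{subset delta} permits $\rho=\sigma$.
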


\begin{proof}
	Consider the set $\Sigma = \{ \sigma : \sigma \in \Delta(A\cup\{f\}) \land
	\sigma \notin \Delta(A) \}$. If $\Sigma$ is empty, then the proof is
	complete. Otherwise, let $\sigma_0$ be any element of $\Sigma$. As $\sigma_0
	\in \Delta(A \cup \{f\})$, we know that $[\sigma_0] \cap A = \emptyset$, and
	by Lemma \ref{subset delta} there is a $\rho \subseteq \sigma_0$ such that
	$\rho \in \Delta(A)$. As $\sigma_0$ is arbitrary, we know that for every
	$\sigma \in \Delta(A \cup \{ f \})$ there is a $\rho\in \Delta(A)$ such that
	$\rho\subseteq \sigma$. 
\end{proof}

\begin{theorem}
	For any two sets $A, B \subseteq {^N2}$, $A$ is a subset of $B$ if and only
	if $\Delta(B) \preceq \Delta(A)$. 
\end{theorem}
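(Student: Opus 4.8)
The plan is to prove the two implications separately, both by direct appeal to Lemma \ref{subset delta}, whose whole purpose is to complete an ``avoiding'' partial function to an element of $\Delta$.

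\textbf{Forward direction ($A\subseteq B \implies \Delta(B)\preceq\Delta(A)$).} Assume $A\subseteq B$ and fix an arbitrary $\sigma\in\Delta(B)$. By definition $[\sigma]\cap B=\emptyset$, and since $A\subseteq B$ this yields $[\sigma]\cap A=\emptyset$. Lemma \ref{subset delta} then produces some $\rho\subseteq\sigma$ with $\rho\in\Delta(A)$. As $\sigma$ was arbitrary, this is exactly the statement $\Delta(B)\preceq\Delta(A)$. (One could alternatively iterate Lemma \ref{add fn} over the finitely many functions of $B\setminus A$ and invoke transitivity of $\preceq$, but the one-line argument above is cleaner and avoids the induction.)

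\textbf{Reverse direction ($\Delta(B)\preceq\Delta(A)\implies A\subseteq B$).} I would argue the contrapositive. Suppose $A\not\subseteq B$ and pick $f\in A\setminus B$. Since $f$ is total, $[f]=\{f\}$, so $f\notin B$ gives $[f]\cap B=\emptyset$; by Lemma \ref{subset delta} there is $\sigma_0\subseteq f$ with $\sigma_0\in\Delta(B)$. I claim $\sigma_0$ witnesses $\Delta(B)\not\preceq\Delta(A)$: if some $\rho\in\Delta(A)$ had $\rho\subseteq\sigma_0$, then $\rho\subseteq\sigma_0\subseteq f$, so $f$ extends $\rho$ and hence $f\in[\rho]\cap A$, contradicting $[\rho]\cap A=\emptyset$. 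Thus no $\rho\in\Delta(A)$ lies below $\sigma_0$, and $\Delta(B)\not\preceq\Delta(A)$, as required.

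There is essentially no hard step here. The only points to get right are the elementary observation that a total function $f$ lies in $[\rho]$ precisely when $\rho\subseteq f$ (this is what makes the contrapositive close), and the recognition that Lemma \ref{subset delta} is exactly the right tool in both directions. The bookkeeping in the reverse direction mirrors the earlier proof that distinct sets $A,B$ yield distinct $\Delta(A),\Delta(B)$, so I would pattern it on that argument.
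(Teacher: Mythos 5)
Your proof is correct. The reverse direction is essentially the paper's own argument: pick $f\in A\setminus B$, use Lemma \ref{subset delta} to find $\sigma_0\subseteq f$ in $\Delta(B)$, and observe that any $\rho\subseteq\sigma_0$ has $f\in[\rho]\cap A$, so $\rho\notin\Delta(A)$. The forward direction, however, takes a genuinely different (and cleaner) route. The paper proves $A\subseteq B\implies\Delta(B)\preceq\Delta(A)$ by repeatedly applying Lemma \ref{add fn}, i.e.\ adding the elements of $B\setminus A$ one at a time and chaining the resulting $\preceq$ relations via transitivity; this implicitly relies on $B\setminus A$ being finite (true here, since ${}^N2$ is finite) and on the earlier observation that $\preceq$ is transitive. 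You instead go directly: for $\sigma\in\Delta(B)$ you have $[\sigma]\cap B=\emptyset$, hence $[\sigma]\cap A=\emptyset$, and Lemma \ref{subset delta} immediately hands you a $\rho\subseteq\sigma$ in $\Delta(A)$. This bypasses the induction and the finiteness consideration entirely, and in fact exposes that Lemma \ref{add fn} is itself just a special case of this one-line argument. Both proofs are valid; yours is the more economical and would generalize more readily if the ambient set of functions were infinite.
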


\begin{proof}
	First, let us assume that $A\subseteq B$. By a repeated application of Lemma
	\ref{add fn} we easily show that $\Delta(B) \preceq \Delta(A)$.
	
	Now, suppose that $A \nsubseteq B$, so there is an $f \in A$ such that $f
	\notin B$. From Lemma \ref{subset delta}, we know there is a $\sigma_0
	\subseteq f$ such that $\sigma_0 \in \Delta(B)$. As $f \in A$, we know that
	for every $\rho \subseteq \sigma_0$ we have $f \in [\rho] \cap A$, and $\rho
	\notin \Delta(A)$. Therefore, $\Delta(B) \npreceq \Delta(A)$. 
\end{proof}

\begin{definition}
	For any set $\delta \subseteq \partialfn$, we define
	\[\begin{array}{rl}
	\hn(\delta) = &\max\Big\{ k+1: k \in N \mbox{ and for every  }\delta'
	\subseteq \delta \mbox{ there exists }\delta'' \subseteq \delta'\\
	&\quad \mbox{such that the elements of $\delta''$ have pairwise disjoint
		domains  and}\\
	&\quad|\bigcup\limits_{\sigma \in \delta''} \dom(\sigma)| \geq k |\delta'|
	\Big\},\\
	\HN(\delta) = &\max\{ \hn(\delta') : \delta \preceq \delta' \}.
	\end{array}\] 
\end{definition}

\begin{remark}
	In general, we do not have $\delta_1 \subseteq \delta_2
	\subseteq \partialfn$ implies that $\hn(\delta_1) \leq \hn(\delta_2)$. We
	also do not have that $A \subseteq B \subseteq {^N2}$ implies that
	$\hn(\Delta(A)) \leq \hn(\Delta(B))$. Consider the case where $N = 4$, let
	$f_1 = \{ (0,0), (1,0), (2,0), (3,0) \}$ and $f_2 = \{ (0,1), (1,1), (2,1),
	(3,1) \}$. Taking $A = \{f_1\}$ and $B = \{ f_1, f_2\}$, we have $\hn(A) =
	5$ and $\hn(B) = 3$, but we also have $\hn(\Delta(A)) = 2$ and
	$\hn(\Delta(B)) = 1$. 
\end{remark}

\begin{definition}
	For $\delta \subseteq \partialfn$ and $k$, we say that a function $F:
	\delta \rightarrow [N]^k$ is a $k$-selector if for $\sigma, \rho \in \delta$
	we have 
	\begin{enumerate}
		\item $F(\sigma) \subseteq \dom(\sigma)$, and 
		\item $F(\sigma) \cap F(\rho) = \emptyset$ if and only if $\sigma \neq \rho$. 
	\end{enumerate}
\end{definition}

\begin{remark}
	\label{rem6.15}
	For any set $\delta \subseteq \partialfn$, $\hn(\delta) = k+1>1 $ implies
	the existence of a $k$--selector. To see this, notice that the necessary and 
	sufficient condition for the existence of a $k$-selector is similar to the
	necessary and sufficient condition in Hall's marriage theorem. Whereas in
	$\hn(\delta)$, we are only counting the contribution of certain sets, rather
	than all sets. 
\end{remark}

\begin{proposition}
	\label{n6.16}
	Let $\delta\subseteq\partialfn$, $0<k<N$. Then $\HN(\delta)>k$ if and only
	if there is $\delta^*\in\partialfn$ such that $\delta\preceq\delta^*$ and
	elements of $\delta^*$ have disjoint domains of size $k$
	
	In particular, the existence of a $k$--selector for
	$\delta\subseteq\partialfn$ implies $\HN(\delta)\geq k+1$. 
\end{proposition}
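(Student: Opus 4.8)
The plan is to prove the stated equivalence one implication at a time and then read off the ``in particular'' clause from the ``if'' direction. The single construction that carries the argument is this: given a family $\varepsilon\subseteq\partialfn$ and a $k$-selector $F$ on $\varepsilon$, form $\varepsilon_F=\{\sigma\restriction F(\sigma):\sigma\in\varepsilon\}$. Since $F(\sigma)\subseteq\dom(\sigma)$, the function $\sigma\restriction F(\sigma)$ has domain exactly $F(\sigma)$, of size $k$; by the direction ``$\sigma\neq\rho\Rightarrow F(\sigma)\cap F(\rho)=\emptyset$'' of the selector axiom these domains are pairwise disjoint (and, since $k\geq 1$, the map $\sigma\mapsto\sigma\restriction F(\sigma)$ is injective on $\varepsilon$); and $\sigma\restriction F(\sigma)\subseteq\sigma$ yields $\varepsilon\preceq\varepsilon_F$. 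So $\varepsilon_F$ is a family all of whose elements have pairwise disjoint domains of size $k$, with $\varepsilon\preceq\varepsilon_F$.

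For ``$\Leftarrow$'': assume $\delta^*$ has $\delta\preceq\delta^*$ with all its elements having pairwise disjoint domains of size $k$. I will check $\hn(\delta^*)\geq k+1$; since $\delta\preceq\delta^*$ this gives $\HN(\delta)\geq\hn(\delta^*)\geq k+1>k$. To see $\hn(\delta^*)\geq k+1$ (an admissible value since $k<N$ means $k\in N$), take any $\delta'\subseteq\delta^*$ and use $\delta''=\delta'$ itself: its elements have pairwise disjoint domains, and $|\bigcup_{\sigma\in\delta'}\dom(\sigma)|=\sum_{\sigma\in\delta'}|\dom(\sigma)|=k|\delta'|$, which meets the required bound. (If $\delta=\emptyset$ there is nothing to witness, and $\HN(\emptyset)=N>k$ anyway.)

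For ``$\Rightarrow$'': assume $\HN(\delta)>k$. If $\delta=\emptyset$ take $\delta^*=\emptyset$. Otherwise fix $\delta'$ with $\delta\preceq\delta'$ and $\hn(\delta')\geq k+1$; then $\delta'\neq\emptyset$, and writing $\hn(\delta')=m+1$ we have $m\geq k\geq 1$, so in particular $\hn(\delta')>1$. Remark \ref{rem6.15} then gives an $m$-selector for $\delta'$, and shrinking each of its values to a $k$-element subset produces a $k$-selector $F$ for $\delta'$. Set $\delta^*=\delta'_F$: by the first paragraph its elements have pairwise disjoint domains of size $k$ and $\delta'\preceq\delta^*$, whence $\delta\preceq\delta'\preceq\delta^*$ by transitivity of $\preceq$. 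This $\delta^*$ is the required witness.

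For the ``in particular'' clause, apply the first paragraph to $\varepsilon=\delta$: a $k$-selector $F$ for $\delta$ yields $\delta_F$ with pairwise disjoint domains of size $k$ and $\delta\preceq\delta_F$, so the right-hand side of the equivalence holds and hence $\HN(\delta)>k$, i.e.\ $\HN(\delta)\geq k+1$. The only step with real content is the passage in ``$\Rightarrow$'' from the $\hn$-inequality to an actual $k$-selector; that is a Hall-type matching argument, isolated in Remark \ref{rem6.15}, so I expect it to be the main obstacle if one wants a fully self-contained proof. Everything else is bookkeeping with function restrictions, the definition of $\preceq$, and counting pairwise disjoint sets.
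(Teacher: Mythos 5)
Your proof is correct and follows essentially the same route as the paper's: both directions hinge on the construction $\delta^*=\{\sigma\restriction F(\sigma):\sigma\in\delta'\}$ from a $k$-selector supplied by Remark \ref{rem6.15}, and the converse is the direct computation $\hn(\delta^*)=k+1$. Your extra care in shrinking an $m$-selector to a $k$-selector when $\hn(\delta')>k+1$ is a small but welcome refinement of the paper's terser appeal to the remark.
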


\begin{proof}
	$(\Rightarrow)$\quad If $\HN(\delta)>k$ then we may choose
	$\delta'\succeq\delta$ such that $\hn(\delta')>k$. By Remark \ref{rem6.15},
	there is a $k$--selector $F$ for $\delta'$. Then $\delta^*=\{\sigma
	\restriction  F(\sigma):\sigma\in\delta'\}\succeq \delta'$ is as required.  
	\medskip
	
	\noindent $(\Leftarrow)$\quad If $\delta^*\succeq\delta$ is such that all
	elements of $\delta^*$ have disjoint domains of size $k$, then
	$\HN(\delta)\geq \hn(\delta^*)=k+1$. 
\end{proof}

\begin{definition}
	For any set $A\subseteq {^N2}$, we define $\norm_4^N(A) = \HN(\Delta_N(A))$.
	When $N$ is understood through context, we may simply write $\norm_4(A)$.
\end{definition}

\begin{corollary}
	For any two sets $A \subseteq B \subseteq {^N2}$ we have $\norm_4(A) \leq
	\norm_4(B)$. 
\end{corollary}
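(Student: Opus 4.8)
The plan is to deduce this from the (unnamed) theorem characterizing inclusion $A\subseteq B$ as $\Delta(B)\preceq\Delta(A)$, together with a monotonicity property of $\HN$ along the preorder $\preceq$. Concretely, I would first isolate the following auxiliary claim: if $\delta_1,\delta_2\subseteq\partialfn$ satisfy $\delta_1\preceq\delta_2$, then $\HN(\delta_2)\leq\HN(\delta_1)$. Note the order reversal here — it mirrors the one already present in the inclusion/$\Delta$ theorem, and tracking directions correctly is the only point that needs any care.

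To prove the auxiliary claim I would simply unwind the definition $\HN(\delta_2)=\max\{\hn(\delta'):\delta_2\preceq\delta'\}$. This maximum is over a nonempty finite set (take $\delta'=\delta_2$, using reflexivity of $\preceq$, and recall $\partialfn$ is finite), so it is attained, say by some $\delta'$ with $\delta_2\preceq\delta'$ and $\hn(\delta')=\HN(\delta_2)$. Since $\delta_1\preceq\delta_2$ and $\delta_2\preceq\delta'$, transitivity of $\preceq$ gives $\delta_1\preceq\delta'$; hence $\hn(\delta')$ is one of the quantities over which the maximum defining $\HN(\delta_1)$ is taken, so $\HN(\delta_1)\geq\hn(\delta')=\HN(\delta_2)$, which is the claim.

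Finally I would assemble the pieces. Given $A\subseteq B\subseteq{^N2}$, the inclusion/$\Delta$ theorem (in fact only its easy direction, obtained by iterating Lemma~\ref{add fn}) yields $\Delta(B)\preceq\Delta(A)$. Applying the auxiliary claim with $\delta_1=\Delta(B)$ and $\delta_2=\Delta(A)$ gives $\HN(\Delta(A))\leq\HN(\Delta(B))$, that is, $\norm_4(A)=\HN(\Delta_N(A))\leq\HN(\Delta_N(B))=\norm_4(B)$. There is essentially no genuine obstacle: the whole content sits in the earlier inclusion/$\Delta$ theorem, and the only thing to watch is that the two order reversals compose to the expected monotonicity of $\norm_4$.
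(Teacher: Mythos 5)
Your proof is correct and is exactly the derivation the paper intends (the corollary is left without an explicit proof there): combine $A\subseteq B\Rightarrow\Delta(B)\preceq\Delta(A)$ with the antitonicity of $\HN$ along $\preceq$, which follows directly from its definition as a maximum over $\preceq$-successors together with reflexivity and transitivity of $\preceq$. Both order reversals are handled correctly, so nothing is missing.
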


\begin{definition}
	For any set $\delta \subseteq \partialfn$ we define the set 
	\[D_N(\delta) =\{ f\in {^N2}: (\forall \sigma \in \delta)( \sigma \nsubseteq
	f) \}.\]  
	As $N$ is understood, we may simply write $D(\delta)$.
\end{definition}

\begin{remark}
	There are two distinct sets $\delta_1,\delta_2 \in \partialfn$ such that
	$D(\delta_1) = D(\delta_2)$. Consider the sets $\delta_1 =\big\{
	\{(0,0)\},\{(0,1)\}\big\}$ and $\delta_2 = \big\{ \{(1,0)\}, \{(1,1)\}
	\big\}$. We know that $D(\delta_1) = D(\delta_2) = \emptyset$. Therefore, we
	cannot in general have $\delta = \Delta(D(\delta))$. 
\end{remark}

\begin{observation}
	$\norm_4$ does not exhibit the triangle inequality. Consider the sets $A = D(\{(0,0)\})$ and $B = D(\{(0,1)\})$. We have $\norm_4(A) = \HN(\{(0,0)\}) = 2$, $\norm_4(B) = 2$, but $A \cup B = {^N2}$, thus $\norm_4(A\cup B) = \HN(\Delta({^N2})) = N+1$.
\end{observation}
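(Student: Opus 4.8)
The plan is to verify directly the three norm values claimed in the statement and then observe that for large $N$ they contradict the triangle inequality; recall $\norm_4^N(C)=\HN(\Delta_N(C))$. I would begin by unwinding $D$: since every $f\in{^N2}$ is total, $\{(0,0)\}\subseteq f$ iff $f(0)=0$, so $A=D(\{\{(0,0)\}\})=\{f\in{^N2}:f(0)=1\}$ and, symmetrically, $B=D(\{\{(0,1)\}\})=\{f\in{^N2}:f(0)=0\}$. Hence $A\cup B={^N2}$, since every total $f$ has $f(0)\in\{0,1\}$.

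Next I would identify the relevant $\Delta$-sets. A partial function $\sigma\in\partialfn$ satisfies $[\sigma]\cap A=\emptyset$ exactly when no total extension of $\sigma$ takes value $1$ at $0$; since an extension may choose any value at coordinates not already fixed by $\sigma$, this holds iff $(0,0)\in\sigma$, and the minimality clause $(\forall\rho\subsetneq\sigma)$ then forces $\sigma=\{(0,0)\}$. Thus $\Delta_N(A)=\{\{(0,0)\}\}$, and likewise $\Delta_N(B)=\{\{(0,1)\}\}$. To evaluate $\norm_4(A)=\HN(\{\{(0,0)\}\})$ I would argue it equals $2$: the assignment $\{(0,0)\}\mapsto\{0\}$ is a $1$-selector, so $\HN(\{\{(0,0)\}\})\geq 2$ (directly, or by Proposition~\ref{n6.16}); conversely, any $\delta'$ with $\{\{(0,0)\}\}\preceq\delta'$ must contain some $\rho\subseteq\{(0,0)\}$, whose domain has at most one element, and using the single-element test family $\{\rho\}$ in the definition of $\hn(\delta')$ already forces its parameter to satisfy $k\leq 1$, so $\hn(\delta')\leq 2$ and $\HN(\{\{(0,0)\}\})\leq 2$. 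The identical computation gives $\norm_4(B)=2$.

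Then I would compute $\norm_4(A\cup B)=\norm_4({^N2})=\HN(\Delta_N({^N2}))$. The crucial point is that $\Delta_N({^N2})=\emptyset$: for every $\sigma\in\partialfn$ the set $[\sigma]$ is nonempty (extend $\sigma$ to a total function), so $[\sigma]\cap{^N2}=[\sigma]\neq\emptyset$ and no $\sigma$ can belong to $\Delta_N({^N2})$. Now $\HN(\emptyset)=\max\{\hn(\delta'):\emptyset\preceq\delta'\}$, and "$\emptyset\preceq\delta'$" holds vacuously for every $\delta'\subseteq\partialfn$, so $\HN(\emptyset)=\max\{\hn(\delta'):\delta'\subseteq\partialfn\}$; this maximum is $N+1$, being at most $N+1$ because every domain is a subset of $N$ (so for a nonempty test family the defining inequality forces $k\leq N$) and being attained, e.g. by $\delta'=\{g\}$ for any total $g\in{^N2}$ (or by $\delta'=\emptyset$). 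Hence $\norm_4(A\cup B)=N+1$, and for $N\geq 4$ we get $\norm_4(A\cup B)=N+1>4=\norm_4(A)+\norm_4(B)$, so the triangle inequality fails for $\norm_4$.

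I expect the only delicate step to be the evaluation of $\HN$ on $\Delta_N({^N2})$: one has to notice that this set is empty and then correctly read $\HN(\emptyset)=N+1$ off the nested definitions — in particular that "$\emptyset\preceq\delta'$" is vacuous, so the outer maximum ranges over all of $\pow(\partialfn)$, and that the "for every subfamily / there exists sub-subfamily" alternation inside $\hn$ is controlled by a single well-chosen test family. Everything else is a routine unwinding of the definitions of $D$ and $\Delta_N$.
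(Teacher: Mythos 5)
Your proposal is correct and follows exactly the computation the observation itself sketches: you verify $\Delta_N(A)=\{\{(0,0)\}\}$, $\Delta_N(B)=\{\{(0,1)\}\}$, $\Delta_N({}^N2)=\emptyset$, and read off the three $\HN$ values, which is precisely the intended argument. The only caveat worth noting is that the failure of the triangle inequality requires $N\geq 4$ (so that $N+1>2+2$), a hypothesis the observation leaves implicit.
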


\begin{proposition}
	For any set $A \subseteq {^N2}$, we have $A = D(\Delta(A))$.
\end{proposition}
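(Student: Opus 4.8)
The plan is to establish the two inclusions $A \subseteq D(\Delta(A))$ and $D(\Delta(A)) \subseteq A$ separately, both by contradiction. The first inclusion is immediate from the definition of $\Delta(A)$; the second will lean on Lemma \ref{subset delta}, together with the observation that a total function, viewed as an element of $\partialfn$, has $[f] = \{f\}$, so that $f \notin A$ is the same as $[f] \cap A = \emptyset$.

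First I would show $A \subseteq D(\Delta(A))$. Take $f \in A$ and suppose, toward a contradiction, that $f \notin D(\Delta(A))$. By the definition of $D$, this means there is some $\sigma \in \Delta(A)$ with $\sigma \subseteq f$, i.e.\ $f \in [\sigma]$. But then $f \in [\sigma] \cap A$, contradicting the requirement $[\sigma] \cap A = \emptyset$ built into membership in $\Delta(A)$. Hence every $f \in A$ lies in $D(\Delta(A))$.

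Next I would show $D(\Delta(A)) \subseteq A$. Take $f \in D(\Delta(A))$ and suppose, toward a contradiction, that $f \notin A$. Since $f$ is total on $N$, any $g \in {^N2}$ extending $f$ must equal $f$, so $[f] = \{f\}$ and therefore $[f] \cap A = \emptyset$. Viewing $f$ as a partial function (with $\dom(f) = N$), Lemma \ref{subset delta} applied to $\sigma = f$ yields a $\rho \subseteq f$ with $\rho \in \Delta(A)$. But $f \in D(\Delta(A))$ says precisely that $\rho \nsubseteq f$ for every $\rho \in \Delta(A)$, a contradiction. Hence $f \in A$.

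Combining the two inclusions gives $A = D(\Delta(A))$. There is no real obstacle here; the only point requiring a moment's care is the identification of a total function with a partial function so that Lemma \ref{subset delta} applies, and the resulting equality $[f] = \{f\}$.
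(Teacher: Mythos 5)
Your proof is correct and follows essentially the same route as the paper's: the inclusion $A \subseteq D(\Delta(A))$ comes straight from the condition $[\sigma]\cap A=\emptyset$ in the definition of $\Delta(A)$, and the reverse inclusion uses $[f]\cap A=\emptyset$ for $f\notin A$ together with Lemma \ref{subset delta}. The only difference is cosmetic (you argue by contradiction where the paper argues directly/contrapositively), and your explicit remark that $[f]=\{f\}$ for total $f$ is a nice clarification.
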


\begin{proof}
	First assume $f \in A$ is arbitrary. Clearly, $(\forall \sigma \in
	\Delta(A))( [\sigma]\cap \{f\} = \emptyset)$, that is $f$ does not extend
	any $\sigma\in\Delta(A)$. Consequently, $f\in D(\Delta(A))$.  
	\medskip
	
	Suppose now that $f \notin A$, so $f\in {^N2}\bs A$.  Then we have $[f] \cap
	A = \emptyset$ and by Lemma \ref{subset delta}, there is a $\rho \subseteq
	f$ such that $\rho \in \Delta(A)$.  This gives us $f \notin D(\Delta(A))$. 
\end{proof}

\begin{definition}
	\begin{enumerate}
		\item For any set $\delta\subseteq \partialfn$ and a set $Z\subseteq N$, we
		define the set $\delta_Z = \{ \sigma \in \delta : \dom(\sigma) \subseteq 
		Z\}$.  
		\item For $\sigma \in \partialfn$ and a set $Z\subseteq N$, we  define the
		restriction $\sigma\restriction Z = \{ (a,b)\in \sigma : a \in Z \}$. 
	\end{enumerate}
\end{definition}

\begin{theorem}
	For any set $\delta\subseteq\partialfn$ and a set $Z\subseteq N$, we have
	$\HN(\delta) \leq \HN(\delta_Z)$. 
\end{theorem}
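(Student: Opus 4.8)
The plan is to notice that, although $\delta_Z$ is a \emph{subset} of $\delta$, it sits \emph{above} $\delta$ in the preorder $\preceq$, and that $\HN$ reverses $\preceq$; the theorem then drops out in two lines.

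First I would record that $\delta_Z\preceq\delta$. By definition $\delta_Z=\{\sigma\in\delta:\dom(\sigma)\subseteq Z\}\subseteq\delta$, so for every $\sigma\in\delta_Z$ the function $\sigma$ itself belongs to $\delta$ and satisfies $\sigma\subseteq\sigma$; this $\sigma$ witnesses the defining clause of $\delta_Z\preceq\delta$. (The slightly counterintuitive point is that passing to the subfamily $\delta_Z$ moves us \emph{up}, not down, in $\preceq$.)

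Next I would push this through the definition of $\HN$. Suppose $\delta'\subseteq\partialfn$ satisfies $\delta\preceq\delta'$. Since $\preceq$ is transitive (noted above) and $\delta_Z\preceq\delta\preceq\delta'$, we get $\delta_Z\preceq\delta'$. Hence $\{\delta':\delta\preceq\delta'\}\subseteq\{\delta':\delta_Z\preceq\delta'\}$, and therefore
\[
\HN(\delta)=\max\{\hn(\delta'):\delta\preceq\delta'\}\le\max\{\hn(\delta'):\delta_Z\preceq\delta'\}=\HN(\delta_Z),
\]
as claimed. Both maxima are genuinely attained — each index set is non-empty (it contains $\delta$, resp. $\delta_Z$) and $\hn$ takes values in the finite set $\{1,\dots,N\}$ — so the displayed comparison of maxima is legitimate.

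There is essentially no obstacle; the only thing demanding care is the direction of the inequality. It is tempting, and wrong, to expect $\HN$ to be monotone under inclusion of families, whereas it is in fact antitone under $\preceq$, which is exactly why the inequality points the way it does. This is the same mechanism behind the earlier corollary that $A\subseteq B$ implies $\norm_4(A)\le\norm_4(B)$ (via $\Delta(B)\preceq\Delta(A)$). If one prefers not to invoke even an implicit antitonicity principle, one can argue concretely instead: fix $\delta^{*}$ with $\delta\preceq\delta^{*}$ and $\hn(\delta^{*})=\HN(\delta)$, observe as above that $\delta_Z\preceq\delta^{*}$, and conclude $\HN(\delta_Z)\ge\hn(\delta^{*})=\HN(\delta)$.
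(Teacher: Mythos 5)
Your proof is correct, and it takes a genuinely different (and slicker) route than the paper's. The paper fixes a witness $\delta'$ with $\delta\preceq\delta'$ and $\hn(\delta')=\HN(\delta)$, passes to its restriction $\delta'_Z\subseteq\delta'$, and uses two facts: that $\hn$ is antitone under inclusion (so $\hn(\delta'_Z)\geq\hn(\delta')$) and that $\delta_Z\preceq\delta'_Z$ (because any $\rho\subseteq\sigma$ with $\dom(\sigma)\subseteq Z$ also has domain in $Z$); this exhibits a concrete family $\delta'_Z$ achieving the bound for $\delta_Z$. You instead prove the cleaner general principle that $\HN$ is antitone with respect to $\preceq$: since $\delta_Z\subseteq\delta$ gives $\delta_Z\preceq\delta$ (each $\sigma$ witnesses itself), transitivity shows the index set of the maximum defining $\HN(\delta_Z)$ contains that defining $\HN(\delta)$, and the inequality is immediate. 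Your argument never touches the internal definition of $\hn$ and in fact proves the stronger statement that $\HN(\gamma)\geq\HN(\delta)$ for \emph{every} subfamily $\gamma\subseteq\delta$, not just $\gamma=\delta_Z$; what it gives up is the explicit witness $\delta'_Z$ that the paper's proof produces. Your attention to the attainment of the maxima (nonempty index sets by reflexivity of $\preceq$, finitely many values of $\hn$) is appropriate and correct.
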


\begin{proof}
	Let $\delta'$ and $k$ be such that $\delta \preceq \delta'$ and
	$\hn(\delta') = \HN(\delta) = k+1$. We know that $\delta'_Z \subseteq
	\delta'$, and by $\hn(\delta') = k+1$, we know $\hn(\delta'_Z) \geq k+1$. We
	also know that $\delta_Z \preceq \delta'_Z$. Therefore, $\HN(\delta_Z) \geq
	\hn(\delta'_Z) \geq \hn(\delta') = \HN(\delta)$. 
\end{proof}

\begin{remark}
	In general, we do not have equality. Consider functions $f_1,
	f_2, f_3 \in {^42}$, where 
	\begin{itemize}
		\item $f_1 = \{ (0,1),(1,1),(2,1),(3,1) \}$, 
		\item $f_2 = \{(0,1),(1,0),(2,1),(3,1) \}$, 
		\item $f_3 = \{(0,0),(1,0),(2,1),(3,1)\}$. 
	\end{itemize}
	For the set $A = \{f_1, f_2, f_3\}$, we have $\Delta(A) = \{
	\{(0,0),(1,1)\},\{(2,0)\},\{(3,0)\} \}$. Thus $\HN(\Delta(A)) =  
	2$ and we also have $\HN(\Delta_{\{0,1\}}(A)) = 3$.
\end{remark}

\begin{definition}
	For $\delta\subseteq \partialfn$ and a set $Z\subseteq N$, we define the
	sets $L(\delta, Z)$ and $R(\delta, Z)$ as follows: 
	\begin{enumerate}
		\item $L(\delta, Z) = \big\{ \sigma\restriction Z : \sigma \in \delta \land
		|\sigma \restriction Z | \geq |\sigma \restriction (N\bs Z)| \big\}$, 
		\item $R(\delta, Z) = \big\{ \sigma\restriction (N\bs Z) : \sigma \in \delta
		\land |\sigma \restriction Z | < |\sigma \restriction (N\bs Z)| \big\}$.  
	\end{enumerate}
\end{definition}

\begin{observation}
	Suppose that $\delta \subseteq \partialfn$ and $Z\subseteq N$. Then 
	\begin{enumerate}
		\item $\delta \preceq L(\delta, Z) \cup R(\delta, Z)$, and 
		\item if $R(\delta, Z) = \emptyset$ then $\delta \preceq L(\delta, Z)$, and
		similarly if $L(\delta, Z) = \emptyset$ then $\delta \preceq R(\delta,
		Z)$. 
	\end{enumerate}
\end{observation}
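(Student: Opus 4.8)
The plan is to unwind the definitions of $\preceq$, $L(\delta,Z)$, and $R(\delta,Z)$ directly; there is essentially no obstacle here beyond bookkeeping. For part (1), I would fix an arbitrary $\sigma \in \delta$ and split into the two exhaustive cases according to whether $|\sigma \restriction Z| \geq |\sigma \restriction (N\bs Z)|$ or $|\sigma \restriction Z| < |\sigma \restriction (N\bs Z)|$. In the first case $\sigma \restriction Z$ is, by definition, an element of $L(\delta, Z)$; in the second case $\sigma \restriction (N\bs Z)$ is an element of $R(\delta, Z)$. In either case the witness produced is a subset of $\sigma$ (since $\sigma \restriction W \subseteq \sigma$ for any $W \subseteq N$), so it lies in $L(\delta, Z) \cup R(\delta, Z)$ and witnesses $\delta \preceq L(\delta, Z) \cup R(\delta, Z)$.

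For part (2), the key observation I would record first is that $R(\delta, Z) = \emptyset$ forces every $\sigma \in \delta$ to satisfy $|\sigma \restriction Z| \geq |\sigma \restriction (N\bs Z)|$: indeed, if some $\sigma \in \delta$ violated this, then $\sigma \restriction (N\bs Z)$ would be a member of $R(\delta, Z)$, contradicting emptiness. Once that is in hand, the argument of part (1) only ever lands in the first case, so every $\sigma \in \delta$ is refined by its own restriction $\sigma \restriction Z \in L(\delta, Z)$, giving $\delta \preceq L(\delta, Z)$. The symmetric statement, with $L$ and $R$ interchanged, follows by the same reasoning with the inequality reversed (if $L(\delta,Z)=\emptyset$ then no $\sigma\in\delta$ meets the $\geq$ condition, so all of them meet the strict $<$ condition and are refined by $\sigma\restriction(N\bs Z)\in R(\delta,Z)$).

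The only subtlety worth flagging — and it is minor — is the handling of the empty function and of an empty $\delta$: when $\sigma = \emptyset$ one has $|\sigma \restriction Z| = |\sigma \restriction (N\bs Z)| = 0$, which sits (correctly) in the ``$\geq$'' case and hence contributes $\emptyset$ to $L(\delta, Z)$; and if $\delta = \emptyset$ then all three $\preceq$-claims hold vacuously. No step here is genuinely hard; the ``main obstacle'', such as it is, is simply being careful that the hypothesis ``$R(\delta, Z) = \emptyset$'' is used in its contrapositive form (no witnessing $\sigma$ exists) rather than being confused with a weaker statement, and that one distinguishes the non-strict inequality defining $L$ from the strict inequality defining $R$ so that the two cases are genuinely exhaustive.
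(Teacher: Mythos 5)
Your proof is correct and is exactly the definition-unwinding argument the paper intends; the paper in fact states this as an Observation with no written proof, so there is nothing to diverge from. Your case split on $|\sigma \restriction Z| \geq |\sigma \restriction (N\bs Z)|$ versus the strict inequality is exhaustive, the witnesses $\sigma\restriction Z$ and $\sigma\restriction(N\bs Z)$ are indeed subsets of $\sigma$ as $\preceq$ requires, and your contrapositive use of $R(\delta,Z)=\emptyset$ (and symmetrically $L(\delta,Z)=\emptyset$) to force all of $\delta$ into the other case is the right way to handle part (2).
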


\begin{remark}
	If $\delta \subseteq \partialfn$ and $Z\subseteq N$ are such that 
	$R(\delta, Z) \neq \emptyset$, then it is possible to have $\HN(L(\delta,
	Z)) > \HN(\delta)$. Let $N = 4$ and consider the functions $\sigma_0 = \{
	(0,0),(1,0),(2,0)\}$, $\sigma_1 = \{ (3,0) \}$. Let $\delta = \{ \sigma_0,
	\sigma_1 \}$ and let $Z = \{ 0,1,2 \}$. We have $L(\delta, Z) =
	\{\sigma_0\}$ and $R(\delta, Z) = \{ \sigma_1 \}$, and consequently
	$\HN(L(\delta, Z)) = 4$, while $\HN(\delta) = 2.$ 
\end{remark}

\begin{theorem}
	\label{thm6.30}
	For any $\delta \subseteq \partialfn$ and $Z \subseteq N$, 
	$$\HN(L(\delta, Z) \cup R(\delta, Z)) = \min\{\HN(L(\delta, Z)),
	\HN(R(\delta, Z))\}.$$ 
\end{theorem}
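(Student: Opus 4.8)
The whole argument rests on one structural feature of the sets involved: every element of $L:=L(\delta,Z)$ has domain contained in $Z$, every element of $R:=R(\delta,Z)$ has domain contained in $N\bs Z$, and $R$ contains no function with empty domain (the inequality defining $R$ is strict). So $L\cup R$ is the disjoint union of a family ``supported in $Z$'' and a family ``supported in $N\bs Z$'', and the claim is that for such unions $\HN$ is the minimum. I would first clear away the degenerate cases. If $\emptyset\in\delta$, then $\emptyset\in L\subseteq L\cup R$, and since $\HN(\eta)=1$ whenever $\emptyset\in\eta$ (any $\delta'\succeq\eta$ must contain $\emptyset$, which forces $\hn(\delta')=1$), the equality is immediate. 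If $L=\emptyset$ or $R=\emptyset$ (which includes $Z=\emptyset$ and $Z=N$), then $L\cup R$ equals the other set, and since $\HN(\emptyset)=\max\{\hn(\delta'):\delta'\subseteq\partialfn\}$ dominates every value of $\HN$, the right-hand side is again that same $\HN$. So assume henceforth $\emptyset\notin\delta$ and $L,R\neq\emptyset$.

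For the inequality ``$\leq$'' I would use the theorem (proved just after the definition of $\delta_Z$) asserting $\HN(\eta)\leq\HN(\eta_Z)$ for every $\eta\subseteq\partialfn$, applied to $\eta=L\cup R$. The only element of $R$ with domain inside $Z$ would be the empty function, which is excluded, and $L$ is already supported in $Z$, so $(L\cup R)_Z=L$, giving $\HN(L\cup R)\leq\HN(L)$; the symmetric computation $(L\cup R)_{N\bs Z}=R$ gives $\HN(L\cup R)\leq\HN(R)$. Together these yield $\HN(L\cup R)\leq\min\{\HN(L),\HN(R)\}$.

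For ``$\geq$'' set $m=\min\{\HN(L),\HN(R)\}$; since $\HN\geq1$ always and the case $m=1$ is vacuous, write $m=k+1$ with $k\geq1$ (and $k<N$, a quick consequence of the main-case hypotheses). Now apply Proposition \ref{n6.16}. As $\HN(L)>k$, there is $L^{\circ}\succeq L$ whose elements have pairwise disjoint domains of size $k$; for each $\sigma\in L$ choose $g(\sigma)\in L^{\circ}$ with $g(\sigma)\subseteq\sigma$ and put $L^{*}=\{g(\sigma):\sigma\in L\}$. Then $L\preceq L^{*}$, the family $L^{*}\subseteq L^{\circ}$ still has pairwise disjoint domains of size $k$, and $\dom(g(\sigma))\subseteq\dom(\sigma)\subseteq Z$, so all these domains lie in $Z$. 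Doing the same for $R$ yields $R^{*}\succeq R$ with pairwise disjoint size-$k$ domains all lying in $N\bs Z$. Then $L^{*}\cup R^{*}\succeq L\cup R$, and its elements have pairwise disjoint domains of size $k$: within $L^{*}$ and within $R^{*}$ by construction, and across the two families because a subset of $Z$ and a subset of $N\bs Z$ are disjoint. By the other direction of Proposition \ref{n6.16}, $\HN(L\cup R)\geq k+1=m$.

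The one point needing care is this ``localization'' in the ``$\geq$'' half: the refinement $L^{\circ}$ furnished by Proposition \ref{n6.16} might use partial functions whose domains leave $Z$, and one cannot simply intersect everything with $Z$ afterwards because $\hn$ is not monotone under passing to subfamilies. The fix is to take $L^{*}$ to be the image of the covering map $g$ rather than all of $L^{\circ}$: each witness $g(\sigma)$ lies below an element of $L$, which pins its domain inside $Z$, while remaining inside $L^{\circ}$ preserves disjointness and the size condition for free. Everything else is unwinding the definitions of $L(\delta,Z)$, $R(\delta,Z)$, $(\cdot)_Z$, $\preceq$, $\hn$ and $\HN$.
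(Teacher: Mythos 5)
Your proof is correct and follows essentially the same route as the paper: the upper bound comes from restricting to $Z$ and to $N\setminus Z$ (you invoke the theorem $\HN(\delta)\leq\HN(\delta_Z)$ directly, where the paper runs the same restriction as a proof by contradiction via $\delta'_Z$), and the lower bound comes from merging refinements of $L(\delta,Z)$ and $R(\delta,Z)$ whose supports are separated by $Z$. If anything you are more careful than the paper: your localization step (taking $L^{*}$ to be the image of the covering map $g$ so that the disjoint-domain refinement actually has domains inside $Z$) justifies a choice the paper merely asserts, and routing both directions through Proposition \ref{n6.16} avoids relying on the unproved identity $\hn(L\cup R)=\min\{\hn(L),\hn(R)\}$.
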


\begin{proof}
	Let $\delta \subseteq \partialfn$ and $Z\subseteq N$. Let $L(\delta, Z)
	\preceq L$ and $R(\delta, Z) \preceq R$ be such that $\HN(L(\delta, Z)) = 
	\hn(L)$ and $\HN(R(\delta, Z)) = \hn(R)$ and domains of functions from $L$
	are included in $Z$ and domains of functions from $R$ are disjoint from
	$Z$. Clearly we have $L(\delta, Z) \cup  R(\delta, Z) \preceq L\cup R$, and
	hence   
	\[\HN(L(\delta, Z) \cup R(\delta, Z)) \geq \hn(L\cup R) = \min\{\hn(L),
	\hn(R)\}.\]  
	To show the equality, without loss of generality, assume that $\min\{\hn(L), 
	\hn(R) \} = \hn(L)$. Suppose towards contradiction that there is a set
	$\delta' \subseteq \partialfn$ such that $L(\delta, Z) \cup R(\delta, Z)
	\preceq \delta'$ and $\hn(\delta') > \hn(L)$. Consider the set $\delta'_Z$,
	it must be that $L(\delta, Z) \preceq \delta'_Z$ and therefore
	$\HN(L(\delta, Z)) > \hn(L)$, a contradiction. 
\end{proof}

\begin{proposition}
	\label{13X}
	Let $N<M$. Suppose $\emptyset \neq A_1\subseteq {}^N 2$, $\emptyset \neq A_2
	\subseteq {}^{[N,M)} 2$ and $A\subseteq {}^M 2$ are such that
	$\norm_4^N(A_1)>1$ and $\norm^{[N,M)}_4(A_2)>1$ and 
	\[A=\{f\cup g:f\in A_1\land g\in A_2\}.\]
	Then $\norm_4^M(A)\geq \min\big\{\norm_4^N(A_1),
	\norm^{[N,M)}_4(A_2)\big\}$. 
\end{proposition}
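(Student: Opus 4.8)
The plan is to pin down $\Delta_M(A)$ exactly in terms of $\Delta_N(A_1)$ and $\Delta_{[N,M)}(A_2)$, and then read the inequality off Theorem~\ref{thm6.30}. Throughout I regard a partial function $\sigma$ with $\dom(\sigma)\subseteq M$ as split into $\sigma\restriction N$ (domain in $N$) and $\sigma\restriction[N,M)$ (domain in $[N,M)$), and similarly a total $h\in{}^M2$ splits as $h\restriction N\cup h\restriction[N,M)$ with the two halves independent. Since $A=\{f\cup g:f\in A_1,\ g\in A_2\}$, one gets immediately that $[\sigma]\cap A\neq\emptyset$ iff $[\sigma\restriction N]\cap A_1\neq\emptyset$ (computed in ${}^N2$) and $[\sigma\restriction[N,M)]\cap A_2\neq\emptyset$ (computed in ${}^{[N,M)}2$); equivalently, $[\sigma]\cap A=\emptyset$ iff at least one of $[\sigma\restriction N]\cap A_1$, $[\sigma\restriction[N,M)]\cap A_2$ is empty.

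Using this dichotomy I would prove $\Delta_M(A)=\Delta_N(A_1)\cup\Delta_{[N,M)}(A_2)$, where the two sets on the right are read as sets of partial functions on $M$. For the inclusion $\supseteq$: if $\sigma\in\Delta_N(A_1)$ then $\dom(\sigma)\subseteq N$, so $\sigma\restriction[N,M)=\emptyset$ and $[\emptyset]\cap A_2=A_2\neq\emptyset$; hence $[\sigma]\cap A=\emptyset$, and for any $\rho\subsetneq\sigma$ minimality of $\sigma$ in $\Delta_N(A_1)$ gives $[\rho\restriction N]\cap A_1\neq\emptyset$ while again $[\rho\restriction[N,M)]\cap A_2=A_2\neq\emptyset$, so $[\rho]\cap A\neq\emptyset$ and therefore $\sigma\in\Delta_M(A)$; the case $\sigma\in\Delta_{[N,M)}(A_2)$ is symmetric (this is exactly where $A_1\neq\emptyset$ and $A_2\neq\emptyset$ are used). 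For $\subseteq$: let $\sigma\in\Delta_M(A)$; by the dichotomy one of the two halves is empty, say $[\sigma\restriction N]\cap A_1=\emptyset$, and Lemma~\ref{subset delta} applied over ${}^N2$ yields $\rho\subseteq\sigma\restriction N$ with $\rho\in\Delta_N(A_1)\subseteq\Delta_M(A)$; since $\rho\subseteq\sigma$ and two distinct members of any $\Delta(\cdot)$ are never $\subseteq$-comparable (an immediate consequence of the definition), $\rho=\sigma$, so $\sigma\in\Delta_N(A_1)$, and the other case is symmetric.

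Now I would apply Theorem~\ref{thm6.30} with $\delta=\Delta_M(A)$ and $Z=N$. Since $\Delta_N(A_1)$ has all its domains inside $N$, while $\Delta_{[N,M)}(A_2)$ has all its domains inside $M\setminus N$ and (because $A_2\neq\emptyset$) does not contain the empty function, unwinding the definitions of $L$ and $R$ gives $L(\Delta_M(A),N)=\Delta_N(A_1)$ and $R(\Delta_M(A),N)=\Delta_{[N,M)}(A_2)$, so that $L(\Delta_M(A),N)\cup R(\Delta_M(A),N)=\Delta_M(A)$. Theorem~\ref{thm6.30} then yields
\[\norm_4^M(A)=\HN\big(\Delta_M(A)\big)=\min\big\{\HN(\Delta_N(A_1)),\ \HN(\Delta_{[N,M)}(A_2))\big\},\]
every $\HN$ here being taken over the ground set $M$. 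Finally I would invoke the purely bookkeeping fact that enlarging the ground set cannot decrease $\HN$ of a family all of whose domains already fit: this gives $\HN^M(\Delta_N(A_1))\geq\HN^N(\Delta_N(A_1))=\norm_4^N(A_1)$ and $\HN^M(\Delta_{[N,M)}(A_2))\geq\norm_4^{[N,M)}(A_2)$, whence $\norm_4^M(A)\geq\min\{\norm_4^N(A_1),\norm_4^{[N,M)}(A_2)\}$.

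The step I expect to be the real work is the set equality $\Delta_M(A)=\Delta_N(A_1)\cup\Delta_{[N,M)}(A_2)$, and within it the minimality bookkeeping: the $\subseteq$-direction hinges on Lemma~\ref{subset delta} together with incomparability of members of $\Delta(\cdot)$, and the $\supseteq$-direction is precisely where $A_1,A_2\neq\emptyset$ cannot be dropped. The closing ground-set remark is routine, but it is also the reason the statement only asserts ``$\geq$'': in degenerate situations such as $A_1={}^N2$ (where $\Delta_N(A_1)=\emptyset$) the inequality $\HN^M(\Delta_N(A_1))\geq\norm_4^N(A_1)$ is strict, so equality of the norms can fail even though the displayed inequality always holds.
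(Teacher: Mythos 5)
Your proof is correct, but it takes a genuinely different route from the paper's. The paper never analyzes $\Delta_M(A)$ itself: it uses Proposition~\ref{n6.16} to replace $\Delta_N(A_1)$ and $\Delta_{[N,M)}(A_2)$ by families $\delta_1,\delta_2$ whose elements have pairwise disjoint domains of uniform sizes $k$ and $\ell$, passes to the smaller set $D_M(\delta_1\cup\delta_2)\subseteq A$, invokes monotonicity of $\norm_4$, and computes $\Delta\big(D(\delta_1\cup\delta_2)\big)=\delta_1\cup\delta_2$ --- an identity that is easy precisely because of the disjoint domains --- before finishing with Theorem~\ref{thm6.30}. You instead identify $\Delta_M(A)$ exactly as the disjoint union $\Delta_N(A_1)\cup\Delta_{[N,M)}(A_2)$ (your minimality and incomparability bookkeeping for this is sound, and you correctly flag that $A_1,A_2\neq\emptyset$ is what keeps the empty function out of the two $\Delta$'s and makes the $\supseteq$ inclusion work), recognize this union as $L\cup R$ for $Z=N$, and apply Theorem~\ref{thm6.30} directly. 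Your version proves more --- the equality $\norm_4^M(A)=\min\big\{\HN(\Delta_N(A_1)),\HN(\Delta_{[N,M)}(A_2))\big\}$ with both $\HN$'s taken over the ground set $M$ --- at the cost of the extra ground-set monotonicity observation at the end; the paper's detour through $\delta_1,\delta_2$ sidesteps that issue, since Proposition~\ref{n6.16} reads $\hn(\delta_1)=k+1$ and $\hn(\delta_2)=\ell+1$ off the disjoint-domain structure independently of the ambient set, but it only yields the inequality. Both arguments funnel through Theorem~\ref{thm6.30} in the same way, so neither is shorter in any essential sense; yours is the more informative of the two.
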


\begin{proof}
	Let $\norm^N_4(A_1)=k+1$ and $\norm^{[N,M)}_4(A_1)=\ell+1$. By Proposition
	\ref{n6.16} we may find $\delta_1\subseteq \partialfn$ and
	$\delta_2\subseteq {}^{\underline{[N,M)}}2$ such that $\Delta(A_1)\preceq
	\delta_1$, $\Delta(A_2)\preceq\delta_2$, elements of $\delta_1$ have
	pairwise disjoint domains of size $k$ and elements of $\delta_2$ have
	pairwise disjoint domains of size $\ell$. Then 
	\begin{enumerate}
		\item[(a)] $D(\delta_i)\subseteq D(\Delta(A_i))=A_i$ for $i=1,2$, 
		\item[(b)] $D_M(\delta_1\cup\delta_2)=\big\{f\cup g: f\in D_N(\delta_1)\land
		g\in D_{[N,M)}(\delta_2)\big\}\subseteq A$,
		\item[(c)] $\Delta\big(D(\delta_1\cup\delta_2)\big)=\delta_0\cup\delta_2$
		(because of the disjoint domains).     
	\end{enumerate}
	Hence,
	\[\begin{array}{r}
	\norm_4^M(A)\geq \norm_4^M\big(D(\delta_1\cup\delta_2)\big)=
	\HN(\delta_1\cup\delta_2)\\
	= \min\big\{\HN(\delta_1),\HN(\delta_2)\big\}=\min\{k+1,\ell+1\}
	\end{array}\] 
	(remember Theorem \ref{thm6.30}).     
\end{proof}

\begin{proposition}
	\label{13A}
	Let $\delta\subseteq \partialfn$ be such that $\hn(\delta)>1$ and let
	$Z\subseteq N$. Then  
	\[\hn\big(L(\delta,Z)\big)\geq \frac{1}{2}\hn(\delta)\quad \mbox{ and }\quad 
	\hn\big(R(\delta,Z)\big)\geq \frac{1}{2}\hn(\delta).\]   
\end{proposition}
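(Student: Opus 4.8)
The plan is to prove the bound for $L(\delta,Z)$; the statement for $R(\delta,Z)$ follows from the symmetric argument with $Z$ replaced by $N\bs Z$ (the defining inequality of $R$ is strict rather than weak, but that only strengthens the analogue of the half--domain estimate below). Write $\hn(\delta)=k+1$, so $k\geq 1$. Since $\hn(L(\delta,Z))$ is always a positive integer, to conclude $\hn(L(\delta,Z))\geq\frac12\hn(\delta)=\frac{k+1}{2}$ it is enough to check that $L(\delta,Z)$ satisfies the condition defining $\hn$ with the parameter $m=\lfloor k/2\rfloor$: this gives $\hn(L(\delta,Z))\geq m+1\geq\frac{k+1}{2}$, the last inequality holding for either parity of $k$. (Note $m=\lfloor k/2\rfloor\leq k<N$, so $m\in N$ and this use of the definition is legitimate.)

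So I would fix an arbitrary $\delta'\subseteq L(\delta,Z)$; the case $\delta'=\emptyset$ is trivial (take $\delta''=\emptyset$), so assume $\delta'\neq\emptyset$. For each $\tau\in\delta'$ use the definition of $L(\delta,Z)$ to choose some $h(\tau)\in\delta$ with $h(\tau)\restriction Z=\tau$ and $|h(\tau)\restriction Z|\geq|h(\tau)\restriction(N\bs Z)|$. The map $h$ is injective, since $\tau$ is recovered from $h(\tau)$ by restricting to $Z$, so $\widehat\delta\defined h[\delta']\subseteq\delta$ has $|\widehat\delta|=|\delta'|$. Applying the defining property of $\hn(\delta)=k+1$ to the subset $\widehat\delta$ yields $E\subseteq\widehat\delta$ whose elements have pairwise disjoint domains and with $\bigl|\bigcup_{\sigma\in E}\dom(\sigma)\bigr|\geq k\,|\widehat\delta|=k\,|\delta'|$.

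Now put $\delta''\defined\{\sigma\restriction Z:\sigma\in E\}$ and verify the routine points: $\delta''\subseteq\delta'$ (each $\sigma\in E$ equals $h(\tau)$ for some $\tau\in\delta'$, and then $\sigma\restriction Z=\tau$); the restriction map is injective on $E$ (if $\sigma_i=h(\tau_i)$ and $\sigma_1\restriction Z=\sigma_2\restriction Z$, then $\tau_1=\tau_2$, hence $\sigma_1=\sigma_2$), so $|\delta''|=|E|$; and the sets $\dom(\sigma)\cap Z$ for $\sigma\in E$ are pairwise disjoint because the $\dom(\sigma)$ are. Finally, for each $\sigma\in E$ the choice of $h$ gives $|\dom(\sigma)\cap Z|=|\sigma\restriction Z|\geq|\sigma\restriction(N\bs Z)|=|\dom(\sigma)\bs Z|$, hence $|\dom(\sigma)\cap Z|\geq\tfrac12|\dom(\sigma)|$; summing over the disjoint family $E$,
\[ \Bigl|\bigcup_{\sigma\in E}\bigl(\dom(\sigma)\cap Z\bigr)\Bigr| =\sum_{\sigma\in E}|\dom(\sigma)\cap Z| \;\geq\;\tfrac12\sum_{\sigma\in E}|\dom(\sigma)| =\tfrac12\Bigl|\bigcup_{\sigma\in E}\dom(\sigma)\Bigr| \;\geq\;\tfrac{k}{2}\,|\delta'|\;\geq\;m\,|\delta'|. \]
As $\delta'\subseteq L(\delta,Z)$ was arbitrary, this establishes the $\hn$--condition with parameter $m$ for $L(\delta,Z)$, whence $\hn(L(\delta,Z))\geq m+1\geq\frac12\hn(\delta)$.

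The step I expect to need the most care is exactly the passage from the disjoint witnessing family $E\subseteq\delta$ to its restriction $\delta''$ to $Z$: one must be sure this restriction lands inside $\delta'$, does not collapse distinct functions, keeps the domains pairwise disjoint, and — the only genuinely non--formal point — loses at most half of the combined domain. That last estimate is precisely what the defining inequality $|\sigma\restriction Z|\geq|\sigma\restriction(N\bs Z)|$ built into $L(\delta,Z)$ is designed to supply. Everything else (injectivity of $h$, the inclusion $\delta''\subseteq\delta'$, and the arithmetic $\lfloor k/2\rfloor+1\geq\frac{k+1}{2}$) is bookkeeping.
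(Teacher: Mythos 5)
Your proof is correct and follows essentially the same route as the paper's: choose for each $\tau\in\delta'$ a preimage in $\delta$ meeting the half--domain condition, apply the $\hn(\delta)$ hypothesis to the resulting subset of $\delta$, restrict the witnessing disjoint family to $Z$, and finish with $\lfloor k/2\rfloor+1\geq\frac{k+1}{2}$. Your version is slightly more explicit about injectivity of the selection map and about applying the $\hn$ condition to $h[\delta']$, but the argument is the same.
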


\begin{proof}
	Let $\hn(\delta)=k+1$. Suppose $\delta'\subseteq L(\delta,Z)$ and for
	$\sigma\in \delta'$ let $\varphi(\sigma)\in \delta$ be such that
	$\sigma=\varphi(\sigma)\restriction Z$ and $|\dom(\sigma)|\geq \frac{1}{2}
	|\dom\big(\varphi(\sigma)\big)$. We may find $\delta''\subseteq 
	\delta'$ such that the elements of $\{\varphi(\sigma):\sigma\in \delta''\}$
	have disjoint domains and 
	\[\Big|\bigcup_{\sigma\in\delta''} \dom\big(\varphi(\sigma)\big)\Big| \geq
	k\cdot |\delta'|.\] 
	Since $\dom(\sigma)=\dom\big(\varphi(\sigma)\big) \cap Z$ and
	$|\dom(\sigma)| \geq \frac{1}{2}|\dom\big(\varphi(\sigma)\big)$ we see that
	the elements of $\delta''$ have pairwise disjoint domains and 
	\[\Big|\bigcup_{\sigma\in \delta''} \dom(\sigma)\Big| \geq \frac{1}{2}
	k\cdot |\delta'|.\]
	Now we easily conclude that $\hn\big(L(\delta,Z)\big)\geq \lfloor\frac{1}{2}k
	\rfloor +1\geq \frac{1}{2}\hn(\delta)$. 
\end{proof}

\begin{proposition}
	\label{13B}
	Let $Z\subseteq N$ and suppose $A\subseteq {}^N2$ satisfies
	$\norm^N_4(A)>1$. Then there are sets $A_L\subseteq {}^Z2$ and
	$A_R\subseteq {}^{N\setminus Z} 2$ such that  
	\begin{enumerate}
		\item $\big\{f\in {}^N2: f\restriction Z\in A_L\land f\restriction
		(N\setminus Z)\in A_R\big\} \subseteq A$,
		\item $\norm_4^Z(A_L)\geq\frac{1}{2}\norm_4^N(A)$ and
		$\norm_4^{N\setminus Z}(A_R)\geq\frac{1}{2}\norm_4^N(A)$.   
	\end{enumerate}
\end{proposition}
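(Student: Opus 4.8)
The plan is to reduce the statement to Proposition~\ref{13A} by transporting $A$ to a family of partial functions, cutting that family across the partition $N=Z\cup(N\bs Z)$ with the operators $L(\cdot,Z)$ and $R(\cdot,Z)$, and transporting back. First fix $\delta\subseteq\partialfn$ with $\Delta_N(A)\preceq\delta$ and $\hn(\delta)=\HN(\Delta_N(A))=\norm_4^N(A)$; such a $\delta$ exists by the definition of $\HN$, and since $\norm_4^N(A)>1$ we have $\hn(\delta)>1$, so Proposition~\ref{13A} applies to it. From $\Delta_N(A)\preceq\delta$ one gets $D(\delta)\subseteq A$: indeed $\gamma_1\preceq\gamma_2$ always forces $D(\gamma_2)\subseteq D(\gamma_1)$ (if $f\in D(\gamma_2)$ and $\sigma\in\gamma_1$, pick $\rho\in\gamma_2$ with $\rho\subseteq\sigma$; then $\rho\nsubseteq f$ forces $\sigma\nsubseteq f$), and $A=D(\Delta_N(A))$. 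Since the elements of $L(\delta,Z)$ have domains contained in $Z$ and those of $R(\delta,Z)$ have domains contained in $N\bs Z$, I put
\[A_L=D_Z\big(L(\delta,Z)\big)\subseteq{}^Z2,\qquad A_R=D_{N\bs Z}\big(R(\delta,Z)\big)\subseteq{}^{N\bs Z}2.\]

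For clause~(1) I would show $\{f\in{}^N2:f\restriction Z\in A_L\land f\restriction(N\bs Z)\in A_R\}\subseteq D(\delta)\subseteq A$. Let $f$ be in the left-hand set and $\sigma\in\delta$. If $|\sigma\restriction Z|\geq|\sigma\restriction(N\bs Z)|$ then $\sigma\restriction Z\in L(\delta,Z)$, so $f\restriction Z\in D_Z(L(\delta,Z))$ gives $\sigma\restriction Z\nsubseteq f\restriction Z$ and hence $\sigma\nsubseteq f$; in the opposite case $\sigma\restriction(N\bs Z)\in R(\delta,Z)$, so $f\restriction(N\bs Z)\in D_{N\bs Z}(R(\delta,Z))$ gives $\sigma\restriction(N\bs Z)\nsubseteq f\restriction(N\bs Z)$ and again $\sigma\nsubseteq f$. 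As $\sigma$ was arbitrary, $f\in D(\delta)$.

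For clause~(2), Proposition~\ref{13A} gives $\hn(L(\delta,Z))\geq\frac{1}{2}\hn(\delta)=\frac{1}{2}\norm_4^N(A)$ and likewise $\hn(R(\delta,Z))\geq\frac{1}{2}\norm_4^N(A)$, so it suffices to establish, for a family $\mu$ of partial functions whose domains lie inside a set $W$, the inequality $\norm_4^W\big(D_W(\mu)\big)\geq\HN(\mu)\geq\hn(\mu)$ (with $\HN$, $\hn$ read relative to $W$) and to apply it with $W=Z,\ \mu=L(\delta,Z)$ and with $W=N\bs Z,\ \mu=R(\delta,Z)$. To prove this auxiliary inequality, write $\HN(\mu)=m+1$; for $m=0$ it is trivial, and for $m\geq1$ Proposition~\ref{n6.16} supplies $\mu^*\succeq\mu$ whose elements have pairwise disjoint domains of size $m$. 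Then $\mu\preceq\mu^*$ yields $D_W(\mu^*)\subseteq D_W(\mu)$; because $\mu^*$ consists of pairwise disjoint domains we have $\Delta_W(D_W(\mu^*))=\mu^*$ (as in item~(c) of the proof of Proposition~\ref{13X}); hence, by the theorem that $A'\subseteq B'$ iff $\Delta(B')\preceq\Delta(A')$, we get $\Delta_W(D_W(\mu))\preceq\mu^*$, and therefore $\norm_4^W(D_W(\mu))=\HN\big(\Delta_W(D_W(\mu))\big)\geq\hn(\mu^*)\geq m+1=\HN(\mu)$.

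The step I expect to be the main obstacle is the bookkeeping of the factor $\frac{1}{2}$ together with the degenerate behavior of the cut: when $L(\delta,Z)$ or $R(\delta,Z)$ is empty (or $Z$, $N\bs Z$ is small), the halving number produced by Proposition~\ref{13A} is computed relative to the ambient index set $N$, while $\norm_4^Z$ and $\norm_4^{N\bs Z}$ are measured relative to the smaller index sets, so one must verify that a refinement with pairwise disjoint domains of size about $\frac{1}{2}\norm_4^N(A)$ genuinely fits inside $Z$ (respectively $N\bs Z$), taking $A_L$ or $A_R$ to be everything in the extreme cases. Matching these two scales up correctly — not the construction of $A_L$, $A_R$ itself — is where the argument really has to do its work.
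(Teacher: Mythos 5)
Your proof is correct and follows essentially the same route as the paper's: the same sets $A_L=D_Z\big(L(\delta,Z)\big)$ and $A_R=D_{N\setminus Z}\big(R(\delta,Z)\big)$, with Proposition~\ref{13A} supplying the factor $\frac{1}{2}$. The only difference is bookkeeping: the paper invokes Proposition~\ref{n6.16} once at the start to replace $\Delta_N(A)$ by a $\delta$ whose elements already have pairwise disjoint domains of size $k$, so that $\Delta_Z\big(D_Z(L(\delta,Z))\big)=L(\delta,Z)$ holds immediately, whereas you defer that normalization and apply \ref{n6.16} to $L(\delta,Z)$ and $R(\delta,Z)$ separately at the end --- both versions work, and your explicit verification of clause (1) and of the scale-matching between $\HN$ over $N$ and over $Z$, $N\setminus Z$ is more careful than what the paper records.
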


\begin{proof}
	Let $k=\HN\big(\Delta_N(A)\big)-1$. By Proposition \ref{n6.16} we 
	may choose $\delta\subseteq \partialfn$ such that $\Delta_N(A)\preceq\delta$
	and elements of $\delta$ have disjoint domains of size $k$. Let 
	\[A_L=D_Z\big(L(\delta,Z)\big)\quad\mbox{ and }\quad A_R= D_{N\setminus Z} 
	\big(R(\delta,Z)\big).\] 
	Since elements of $L(\delta,Z)$ ($R(\delta,Z)$, respectively) have disjoint
	domains we see that $\Delta_Z\big(D_Z(L(\delta,Z))\big)=L(\delta,Z)$
	($\Delta_{N\setminus Z}\big(D_{N\setminus Z}(R(\delta,Z))\big)=R(\delta,Z)$,
	respectively). By Proposition \ref{13A} we get $\norm^Z_4(A_L)\geq
	\frac{1}{2}\hn(\delta)= \norm^N_4(A)$ and  $\norm^{N\setminus Z}_4(A_R)\geq 
	\frac{1}{2}\hn(\delta)= \norm^N_4(A)$. 
\end{proof}

\begin{theorem}
	For any $\delta \subseteq \partialfn$ and $A \subseteq N$, if $R(\delta, A)
	= \emptyset$ then $\HN(L(\delta, A)) \geq \HN(\delta) - \frac{N}{2}$. 
\end{theorem}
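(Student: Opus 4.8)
The plan is to exhibit a single concrete family $\delta^{**}\subseteq\partialfn$ that refines $L(\delta,A)$ in the refinement ordering $\preceq$ and whose $\hn$ is immediate to read off, and then to use $\HN\big(L(\delta,A)\big)\geq\hn(\delta^{**})$, which is part of the definition of $\HN$. If $\delta=\emptyset$ then $L(\delta,A)=\emptyset$ and there is nothing to prove, so assume $\delta\neq\emptyset$; then $R(\delta,A)=\emptyset$ forces $L(\delta,A)=\{\sigma\restriction A:\sigma\in\delta\}\neq\emptyset$, so $\HN(L(\delta,A))\geq 1$ and we may also assume $\HN(\delta)-\tfrac{N}{2}>1$. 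Write $\HN(\delta)=k+1$; since $\HN$ never exceeds $N$ while $k+1-\tfrac{N}{2}>1$, a quick check gives $\lfloor N/2\rfloor+1\leq k\leq N-1$, so in particular $0<k<N$ and $k-\lfloor N/2\rfloor\geq 1$.

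First I would apply Proposition \ref{n6.16} to $\HN(\delta)>k$ to obtain $\delta^*\subseteq\partialfn$ with $\delta\preceq\delta^*$ whose elements have pairwise disjoint domains of size $k$. The key preparatory move is to \emph{tighten} $\delta^*$, discarding every $\rho\in\delta^*$ that is not contained in any element of $\delta$: this preserves $\delta\preceq\delta^*$ (for a given $\sigma\in\delta$ the function witnessing $\delta\preceq\delta^*$ is by definition a subset of $\sigma$, hence not discarded) and of course keeps the elements of $\delta^*$ having pairwise disjoint domains of size $k$. The hypothesis enters exactly here: since $R(\delta,A)=\emptyset$, every $\sigma\in\delta$ has $|\dom(\sigma)\cap A|\geq|\dom(\sigma)\cap(N\setminus A)|$, so $2\,|\dom(\sigma)\cap(N\setminus A)|\leq|\dom(\sigma)|\leq N$, i.e.\ $|\dom(\sigma)\cap(N\setminus A)|\leq\lfloor N/2\rfloor$; by tightness each $\rho\in\delta^*$ is a subset of some such $\sigma$, whence $|\dom(\rho)\cap(N\setminus A)|\leq\lfloor N/2\rfloor$ as well.

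Now put $\delta^{**}=\{\rho\restriction A:\rho\in\delta^*\}$. For distinct $\rho_1,\rho_2\in\delta^*$ the sets $\dom(\rho_1)$ and $\dom(\rho_2)$ are disjoint, hence so are $\dom(\rho_1)\cap A$ and $\dom(\rho_2)\cap A$; and for each $\rho\in\delta^*$ we have $|\dom(\rho)\cap A|=k-|\dom(\rho)\cap(N\setminus A)|\geq k-\lfloor N/2\rfloor\geq 1$. Thus $\delta^{**}$ consists of functions with pairwise disjoint domains each of size at least $k-\lfloor N/2\rfloor$, and then the definition of $\hn$ yields $\hn(\delta^{**})\geq(k-\lfloor N/2\rfloor)+1$ directly: for any $\delta'\subseteq\delta^{**}$ one takes $\delta''=\delta'$, whose elements already have disjoint domains of total size at least $(k-\lfloor N/2\rfloor)\,|\delta'|$. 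Finally, using $\delta\preceq\delta^*$ together with $L(\delta,A)=\{\sigma\restriction A:\sigma\in\delta\}$ one checks $L(\delta,A)\preceq\delta^{**}$: given $\sigma\in\delta$, pick $\rho\in\delta^*$ with $\rho\subseteq\sigma$, so $\rho\restriction A\in\delta^{**}$ and $\rho\restriction A\subseteq\sigma\restriction A$. Therefore
\[\HN\big(L(\delta,A)\big)\geq\hn(\delta^{**})\geq k-\lfloor N/2\rfloor+1\geq(k+1)-\tfrac{N}{2}=\HN(\delta)-\tfrac{N}{2}.\]
I expect the only genuine subtlety to be the tightening step: without it $\delta^*$ could contain functions whose domains lie entirely inside $N\setminus A$ (of size $k>\lfloor N/2\rfloor$), which would both destroy the lower bound on the domain sizes in $\delta^{**}$ and render pointless the one place where $R(\delta,A)=\emptyset$ is actually needed.
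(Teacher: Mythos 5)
Your argument is correct, and it takes a genuinely different route from the paper's. The paper reduces to the case where the optimal refinement of $\delta$ is a single function $\theta$ (disposing of $|\delta'|\geq 2$ by the crude bound $\hn(\delta')\leq \frac{N}{2}+1$), and then splits a maximal such $\theta$ into a piece $\lambda$ inside $A$ and a piece $\rho$ inside $N\setminus A$, using $R(\delta,A)=\emptyset$ to force $|\rho|\leq\frac{N}{2}$. You instead work with the ``normal form'' supplied by Proposition \ref{n6.16} --- a refinement $\delta^*$ with pairwise disjoint domains of size $k$ --- and restrict each of its members to $A$; the hypothesis $R(\delta,A)=\emptyset$ enters to guarantee each restricted domain keeps at least $k-\lfloor N/2\rfloor$ points. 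Your tightening step is exactly the right precaution (and is the analogue of what makes Proposition \ref{13A} work), and the verification $L(\delta,A)\preceq\delta^{**}$ is clean because $R(\delta,A)=\emptyset$ makes $L(\delta,A)=\{\sigma\restriction A:\sigma\in\delta\}$. Your route avoids the paper's maximality/exchange argument entirely and is closer in spirit to the surrounding propositions; the paper's route avoids invoking Proposition \ref{n6.16}. One small caveat: you justify $k\leq N-1$ by asserting that $\HN$ never exceeds $N$. That is what the letter of the definition of $\hn$ (with $k\in N$) gives, but the paper's own usage --- including the Remark immediately following this theorem, where $\HN(\{f\})=N+1$ --- allows the value $N+1$, in which case $k=N$ falls outside the stated hypotheses of Proposition \ref{n6.16}. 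The case is harmless ($\hn(\delta')=N+1$ forces $\delta'$ to be a singleton containing a total function, so $\delta$ itself is at most a singleton $\{\theta\}$ with $\theta$ total, and $|A|\geq\lceil N/2\rceil$ finishes it), but you should add the one sentence handling it.
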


\begin{proof}
	Let $\delta \subseteq \partialfn$ and $A \subseteq N$ be arbitrary sets such
	that $R(\delta, A)  = \emptyset$. Suppose we have another set $\delta'$ such
	that $\delta \preceq \delta'$ and $\HN(\delta) = \hn(\delta')$. If
	$|\delta'| \geq 2$, then $\hn(\delta') \leq \frac{N}{2}$ and the proof is
	complete. 
	
	Otherwise, we  consider the set $\gamma = \{ \sigma\restriction (N\bs A) : 
	\sigma \in \delta \}$. Let $\rho \in \partialfn$ be a function such that
	$\gamma \preceq \{ \rho \}$ and $|\rho|$ is maximal. As we have $\gamma
	\preceq \{ \rho \}$, we also have $\delta \preceq \{\rho\}$. Notice that
	$|\rho| \leq \frac{N}{2}$. 
	
	Let $\lambda \in \partialfn$ be such that $L(\delta, A) \preceq \{\lambda\}$
	and $|\lambda|$ is maximal, and again we have $\delta \preceq \{ \lambda
	\}$. Together, we get $\delta \preceq \{ \lambda \cup \rho \}$, giving
	us 
	$$\HN(\delta) \geq \hn(\{\lambda\cup\rho\}) = |\lambda| + |\rho| + 1.$$ 
	To show this must be equality, we assume there is a $\theta \in \partialfn$
	such that $\delta \preceq \{\theta\}$ and $|\theta| > | \lambda | + | \rho
	|$. Checking $\theta \restriction A$, we get $L(\delta, A) \preceq \{
	\theta\restriction A \}$. If $|\theta \restriction A | > |\lambda|$, then we
	contradict $|\lambda|$ being maximal, so we have $|\theta \restriction A|
	\leq |\lambda|$. Checking $\theta \restriction (N\bs A)$, we must either
	contradict $|\rho|$ being maximal or we get $|\theta \restriction (N\bs A)|
	\leq |\rho|$. Finally, as $\theta = \theta \restriction A \cup \theta
	\restriction (N\bs A)$, we must have $|\theta| \leq |\lambda \cup \rho|$,
	which contradicts our assumption for $\theta$. Therefore we have
	$\HN(\delta) = |\lambda| + 1 +  |\rho| \leq \HN(L(\delta, A)) +
	\frac{N}{2}$. 
\end{proof}

\begin{remark}
	This is the best limit we can have. Consider when $A \subseteq N$ is such
	that $|A| = \frac{N}{2}$. Let $\delta = \{ f \}$, where $f$ is any element
	of ${^N2}$. We get $R(\delta, A) = \emptyset$, $L(\delta, A) = \{ f
	\restriction A \}$, $HN(\delta) = N+1$, and $\HN(L(\delta, A)) = \frac{N}{2}
	+ 1$. 
	
	This isn't the only case, we can also consider a set $A \subseteq N$ such
	that $|A| = \frac{N}{2}$. Let $\rho \in \partialfn$ be such that $A
	\subseteq \dom(\rho)$. We define $\delta = \{ \sigma \in \partialfn : \rho
	\subseteq \sigma \}$. Clearly $\HN(\delta) = |\rho| + 1$, but we also get
	$R(\delta, A) = \emptyset$ and $\HN(L(\delta, A)) = \HN(\delta) -
	\frac{N}{2}$. 
\end{remark}

\subsection{Relationship with the Standard Norm}

\begin{theorem}
	If we have $\norm_4(A) = k+1>1$, then 
	\[|A| \geq 2^N -
	\sum\limits_{j=1}^{\lfloor\frac{N}{k}\rfloor}(-1^{j-1}{\lfloor
		\frac{N}{k}\rfloor \choose j}2^{N-jk}).\]  
\end{theorem}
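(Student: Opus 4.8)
The plan is to bound $A$ from below by a set of the shape $D(\delta^{*})$ for a very structured family $\delta^{*}$, and then to evaluate $|D(\delta^{*})|$ by inclusion--exclusion. Write $\norm_4(A)=\HN(\Delta(A))=k+1>k$. Applying Proposition \ref{n6.16} (legitimate for $k<N$; the boundary case $k=N$ forces $\Delta(A)$ to be empty or a single total function, so that $|A|\geq 2^{N}-1$, which is precisely the right-hand side when $k=N$ since then $\lfloor N/k\rfloor=1$ --- so from now on assume $k<N$), we obtain $\delta^{*}\subseteq\partialfn$ with $\Delta(A)\preceq\delta^{*}$ whose elements $\sigma_{1},\dots,\sigma_{m}$ have pairwise disjoint domains, each of size exactly $k$. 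Since these domains are disjoint subsets of $N$, we get $mk\leq N$, hence $m\leq\lfloor N/k\rfloor$.

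The first key step is to descend from $\delta^{*}$ back into $A$. From $\Delta(A)\preceq\delta^{*}$ one reads off that $D(\delta^{*})\subseteq D(\Delta(A))$: if $f\in D(\delta^{*})$ extended some $\tau\in\Delta(A)$, then $\tau$ would in turn extend some $\sigma_{i}\in\delta^{*}$, so $f$ would extend $\sigma_{i}$, contradicting $f\in D(\delta^{*})$. Combining this with the earlier identity $A=D(\Delta(A))$ gives $|A|\geq|D(\delta^{*})|$. The second key step is the count. Unwinding the definitions, $D(\delta^{*})={}^{N}2\setminus\bigcup_{i=1}^{m}[\sigma_{i}]$, and because $\dom(\sigma_{1}),\dots,\dom(\sigma_{m})$ are pairwise disjoint we have $\bigcap_{i\in S}[\sigma_{i}]=[\bigcup_{i\in S}\sigma_{i}]$ for every $S\subseteq\{1,\dots,m\}$, a set of size $2^{N-|S|k}$ since $\bigcup_{i\in S}\sigma_{i}\in\partialfn$ has domain of size $|S|k$. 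Inclusion--exclusion then yields
\[
|D(\delta^{*})| = 2^{N} - \sum_{j=1}^{m}(-1)^{j-1}\binom{m}{j}2^{N-jk}.
\]

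The last step is the comparison of $m$ with $\lfloor N/k\rfloor$. Regrouping,
\[
2^{N} - \sum_{j=1}^{m}(-1)^{j-1}\binom{m}{j}2^{N-jk} = \sum_{j=0}^{m}(-1)^{j}\binom{m}{j}2^{N-jk} = 2^{N}\bigl(1-2^{-k}\bigr)^{m},
\]
so since $0<1-2^{-k}<1$ and $m\leq\lfloor N/k\rfloor$,
\[
|A| \geq |D(\delta^{*})| = 2^{N}\bigl(1-2^{-k}\bigr)^{m} \geq 2^{N}\bigl(1-2^{-k}\bigr)^{\lfloor N/k\rfloor},
\]
and expanding this last quantity by the binomial theorem reproduces exactly $2^{N} - \sum_{j=1}^{\lfloor N/k\rfloor}(-1)^{j-1}\binom{\lfloor N/k\rfloor}{j}2^{N-jk}$, the desired bound. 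I expect the only genuine subtleties to be orienting the inclusion $D(\delta^{*})\subseteq A$ correctly and dealing with the boundary $k=N$ where Proposition \ref{n6.16} is not literally available; the rest is the identity $2^{N}-\sum_{j=1}^{m}(-1)^{j-1}\binom{m}{j}2^{N-jk}=2^{N}(1-2^{-k})^{m}$ together with the observation that this is decreasing in $m$.
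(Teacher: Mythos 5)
Your proof is correct, and it follows the same overall strategy as the paper's: replace $\Delta(A)$ by a $\preceq$-larger family whose elements have pairwise disjoint domains of size $k$ (the paper builds this by hand from a $k$-selector; you invoke Proposition \ref{n6.16}, whose proof is that very construction), observe that applying $D$ to that family produces a subset of $A$, and count by inclusion--exclusion. The place where you genuinely improve on the paper is the last step. The paper's proof simply declares ``as we are trying to create a lower bound, we will assume $|\delta'|=\lfloor\frac{N}{k}\rfloor$,'' which is unjustified: the disjoint-domain family has some size $m$ that need not equal $\lfloor\frac{N}{k}\rfloor$, and one must actually verify that shrinking $m$ only increases $|D(\delta')|$. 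Your regrouping of the inclusion--exclusion sum into the closed form $2^N\bigl(1-2^{-k}\bigr)^m$, combined with $mk\leq N$ and the fact that $\bigl(1-2^{-k}\bigr)^m$ is decreasing in $m$, supplies exactly the missing comparison and turns the paper's heuristic into a proof. Your separate treatment of the boundary case $k=N$, where Proposition \ref{n6.16} is not literally applicable and the right-hand side degenerates to $2^N-1$, is likewise a detail the paper passes over; your verification of it is correct.
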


\begin{proof}
	By definition, $\norm_4(A) = k+1$ implies that there is some set $\delta$
	such that $\hn(\delta) = k+1$ and $\Delta(A) \preceq \delta$. Let $\delta_0$
	satisfy this. We also have that $\hn(\delta_0) = k+1$ implies the existence
	of a $k$-selector for $\delta_0$. Let $G$ be a $k$-selector for
	$\delta_0$. Define the set $\delta' = \{ \sigma \restriction G(\sigma)
	:\sigma \in \delta_0 \}$. Notice that $\delta_0 \preceq \delta'$,
	$\hn(\delta') = k+1$, and $\Delta(D(\delta')) = \delta'$. Therefore, we have
	$\Delta(A) \preceq \Delta(D(\delta'))$, and consequently $D(\delta')
	\subseteq A$. We then have $|A| \geq |D(\delta')| = 2^N - |
	\bigcup\limits_{\sigma \in \delta'} [\sigma] |$. As we are trying to create
	a lower bound, we will assume $|\delta'| = \lfloor \frac{N}{k}
	\rfloor$. Furthermore, by the general inclusion-exclusion principle,  
	\[| \bigcup\limits_{\sigma \in \delta'} [\sigma] | =
	\sum\limits_{\sigma\in\delta'} |[\sigma]| - \sum\limits_{\sigma_1, \sigma_2}
	| [\sigma_1]\cap[\sigma_2]| + \ldots + (-1)^{\lfloor \frac{N}{k}
		\rfloor}|[\sigma_1] \cap [\sigma_2]\cap\ldots\cap[\sigma_{\lfloor
		\frac{N}{k} \rfloor}]|.\] 
	We already know that the functions in $\delta'$ are pairwise disjoint, so we
	can simplify to 
	\[ | \bigcup\limits_{\sigma \in \delta'} [\sigma] | =
	\sum\limits_{\sigma\in\delta'} |[\sigma]| - \sum\limits_{\sigma_1, \sigma_2}
	| [\sigma_1\cup\sigma_2]| + \ldots + (-1)^{\lfloor \frac{N}{k}
		\rfloor}|[\sigma_1\cup\ldots\cup\sigma_{\lfloor \frac{N}{k} \rfloor}]|. \] 
	We also know that $|[\sigma]| = 2^{N-|\sigma|}$, which allows us to further
	simplify to  
	\[ | \bigcup\limits_{\sigma \in \delta'} [\sigma] | =
	\sum\limits_{\sigma\in\delta'} 2^{N-k} - \sum\limits_{\sigma_1, \sigma_2}
	2^{N-2k} + \ldots + (-1)^{\lfloor \frac{N}{k}
		\rfloor}2^{N-\lfloor\frac{N}{k}\rfloor k}. \] 
	Finally, recognizing that for the $j^{th}$ sum, we are summing over all ways
	to choose $j$ functions from $\delta'$, we get  
	\[ | \bigcup\limits_{\sigma \in \delta'} [\sigma] | =
	\sum\limits_{j=1}^{\lfloor\frac{N}{k}\rfloor}(-1)^{j-1}{\lfloor
		\frac{N}{k}\rfloor \choose j}2^{N-jk}. \] 
	As we are subtracting this off of $2^N$, we get $|A|\geq 2^N -
	\sum\limits_{j=1}^{\lfloor\frac{N}{k}\rfloor}(-1^{j-1}{\lfloor
		\frac{N}{k}\rfloor \choose j}2^{N-jk})$. 
\end{proof}

\subsection{Relationship with the Subset Norm}
For this subsection, we will assume $G = 2^N$, and $n \leq N$. 

\begin{definition}
	We define the function $P: \partialfn \rightarrow \pow(N)$ as follows  
	\[ P(\sigma) = \{ a \in \dom(\sigma) : \sigma(a) = 1 \} \quad\mbox{ for
	}\sigma\in\partialfn.\] 
\end{definition}

\begin{observation}
	$P\restriction {}^N 2$ is a bijection from ${^N2}$ onto $\pow(N)$.
\end{observation}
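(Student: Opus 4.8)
The plan is to recognize this as the familiar identification of subsets of $N$ with their characteristic functions, so the proof reduces to three routine checks against the definition of $P$: that $P\restriction{}^N2$ lands in $\pow(N)$, that it is injective, and that it is surjective.

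First I would note that the map is well defined with the right codomain: for any $f\in{}^N2$ we have $\dom(f)=N$ (these are \emph{total} functions), so by definition $P(f)=\{a\in N:f(a)=1\}$ is a subset of $N$, hence an element of $\pow(N)$. For injectivity, suppose $f,g\in{}^N2$ satisfy $P(f)=P(g)$. Then for every $a\in N$, the statement ``$a$ belongs to this common set'' is equivalent both to $f(a)=1$ and to $g(a)=1$; since $f(a),g(a)\in\{0,1\}$, this forces $f(a)=g(a)$ for all $a\in N$, i.e. $f=g$. For surjectivity, given $S\in\pow(N)$, define $f\colon N\to\{0,1\}$ by $f(a)=1$ when $a\in S$ and $f(a)=0$ otherwise; then $f\in{}^N2$ and, directly from the definition, $P(f)=\{a\in N:f(a)=1\}=S$.

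I do not expect any genuine obstacle here. The only point that warrants a moment's care is that the elements of ${}^N2$ are total, so $\dom(f)=N$ and the defining formula for $P$ picks out exactly $\{a\in N:f(a)=1\}$ with no residual dependence on the domain; once that is noted, the argument is a straightforward unwinding of definitions.
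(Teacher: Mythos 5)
Your proof is correct and is exactly the standard characteristic-function argument the paper has in mind (the Observation is stated without proof precisely because this identification is routine). The one point worth making explicit --- that elements of ${}^N2$ are total, so $\dom(f)=N$ and $P(f)=\{a\in N: f(a)=1\}$ --- you have noted, and the injectivity and surjectivity checks are complete.
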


\begin{proposition}
	Suppose we have a set $B \subseteq X_n^G$ and $A=P^{-1}[B]\cap {}^N2$. If we
	know that $\norm_2(B) = k$, then $\norm_4(A) \leq k+1$. 
\end{proposition}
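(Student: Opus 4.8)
The plan is to translate a witness for $\norm_2(B) \le k$ (equivalently, a set $x \subseteq G$ with $|x| = k$ that is not contained in any member of $B$) into a partial function $\sigma \in \partialfn$ that witnesses $\HN(\Delta_N(A)) \le k+1$. The connecting device is the bijection $P\restriction {}^N2$ between ${}^N2$ and $\pow(N)$; under this identification a set $a \subseteq G = \pow(N)$ gives a family of subsets of $N$, and "$x \subseteq a$" for $x \in G = \pow(N)$ means $x$, viewed as an element of $\pow(N)$, lies in the family $a$. So from $\norm_2(B) = k$ we get some $x \in \pow(N)$ with $|x| = k$ (as an element of $G$, i.e. $x$ is a $k$-element subset of $N$) such that $x \notin a$ for every $a \in B$; equivalently $P^{-1}(x) \notin A$.

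First I would make precise what $\norm_2(B)=k$ buys us: there is $x \subseteq G$ with $|x| = k$ and $(\forall a \in B)\, x \nsubseteq a$. Here $x$ is a set of subsets of $N$ (a $k$-element subset of $G=\pow(N)$), say $x = \{y_1,\dots,y_k\}$ with each $y_i \subseteq N$. The condition $x \nsubseteq a$ says: for each $a \in B$ there is some $y_i \notin a$. Next I would build a candidate $\delta \subseteq \partialfn$ or, more directly, a single $\sigma$ (or small family) whose domain has size $k$ and such that $\Delta_N(A) \preceq \{\sigma\}$ (or $\preceq \delta$ with the right hn-value), so that $\HN(\Delta_N(A)) \le \hn(\{\sigma\}) = k+1$ follows. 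The natural choice: pick, for each $y_i$, a coordinate $n_i \in N$ on which to "forbid" the pattern that would force membership — but the cleanest route is probably to use the characterization $A = D(\Delta(A))$ together with $A = P^{-1}[B] \cap {}^N2$ and work out which partial functions $\sigma$ have $[\sigma] \cap A = \emptyset$, showing that $\Delta_N(A)$ refines a family of functions of domain-size $\le k$ coming from $x$.

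The key steps, in order: (1) unwind $\norm_2(B) = k$ to extract the $k$-element set $x = \{y_1,\dots,y_k\} \subseteq \pow(N)$ with $x \nsubseteq a$ for all $a \in B$; (2) using the bijection $P$, reinterpret "$f \in A$" as "$P(f) \in B$", hence "$P(f) \supseteq x$ is impossible when $P(f)\in B$" — more carefully, relate the $y_i$ to coordinates so that some partial function $\sigma$ with $|\dom(\sigma)| \le k$ satisfies $[\sigma] \cap A = \emptyset$; (3) apply Lemma~\ref{subset delta} to get $\rho \subseteq \sigma$ with $\rho \in \Delta_N(A)$, but since what we want is a bound we want $\Delta_N(A) \preceq \{\sigma_j\}_j$ for a suitable small family, so instead I would argue directly that every element of $\Delta_N(A)$ extends one of finitely many functions of domain size $\le k$; (4) conclude $\hn(\Delta_N(A)) \le k+1$ on that refining family, hence $\HN(\Delta_N(A)) = \norm_4(A) \le k+1$ by definition of $\HN$ as the sup of $\hn$ over $\preceq$-coarsenings — here one must be careful about the direction of $\preceq$, recalling $\HN(\delta) = \max\{\hn(\delta') : \delta \preceq \delta'\}$, so an upper bound on $\HN$ requires bounding $\hn$ for every $\delta' \succeq \Delta_N(A)$, which is exactly what Theorem~\ref{thm6.30}-style domain-restriction arguments (or the $\delta_Z$ monotonicity) are designed to handle.

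The main obstacle I expect is step (2)–(4): converting a single "forbidden inclusion" witness $x$ of size $k$ into control over the hn-value of \emph{all} $\preceq$-refinements of $\Delta_N(A)$, not just of $\Delta_N(A)$ itself. Concretely, one needs that once $A \subseteq D(\delta_0)$ for some $\delta_0$ with all domains of size $k$ and pairwise disjoint (or at least with $\bigl|\bigcup \dom\bigr|$ small relative to $k|\delta_0|$), this survives passing to coarser families; I anticipate using Proposition~\ref{n6.16} in the contrapositive — if $\HN(\Delta_N(A)) > k+1$ then there would be $\delta^* \succeq \Delta_N(A)$ with disjoint domains of size $k+1$, hence (via $A = D(\Delta(A))$ and $D(\delta^*) \subseteq D(\Delta_N(A)) = A$) a corresponding large "free" product of patterns inside $A$, which one then contradicts using the witness $x$ and the fact that $x \nsubseteq a$ forces every $a \in B$ to miss one of the $k$ sets $y_i$, leaving no room for $k+1$ independent disjoint-domain constraints. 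Getting this counting/Hall-type contradiction to line up with the combinatorics of $X_n^G$ is the delicate part.
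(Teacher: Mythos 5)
Your overall strategy coincides with the paper's: turn the $\norm_2$-witness into a partial function of domain size $k$ whose cylinder misses $A$, and use that to cap $\HN(\Delta_N(A))$. However, the proposal leaves unexecuted exactly the two steps that constitute the proof, and in both places it predicts delicacy where there is none. For the construction of $\sigma$: you waver between reading the witness $x$ (with $|x|=k$ and $x\nsubseteq b$ for all $b\in B$) as a $k$-element family $\{y_1,\dots,y_k\}$ of subsets of $N$ and as a $k$-element subset of $N$, and you never commit or write $\sigma$ down. Under the identification implicit in $A=P^{-1}[B]\cap{}^N2$ (so that $f\in A$ exactly when $P(f)\in B$), the witness is a $k$-element subset of $N$ and $\sigma$ is simply the constant-$1$ function with domain $x$: then $[\sigma]=\{f\in{}^N2: x\subseteq P(f)\}$, and any $f\in[\sigma]\cap A$ would yield $b=P(f)\in B$ with $x\subseteq b$, contradicting the choice of $x$. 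There is no further combinatorics of $X_n^G$ to line up.

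For the finishing step: once Lemma \ref{subset delta} gives some $\rho\subseteq\sigma$ with $\rho\in\Delta_N(A)$, the upper bound is a one-liner and needs neither Theorem \ref{thm6.30} nor the contrapositive counting argument against disjoint domains of size $k+1$ that you sketch as the ``main obstacle.'' For any $\delta$ with $\Delta_N(A)\preceq\delta$ there is $\rho'\in\delta$ with $\rho'\subseteq\rho$, hence $|\dom(\rho')|\leq k$; testing the singleton subfamily $\{\rho'\}$ in the definition of $\hn$ forces $\hn(\delta)\leq|\dom(\rho')|+1\leq k+1$, and taking the maximum over all such $\delta$ gives $\norm_4(A)=\HN(\Delta_N(A))\leq k+1$. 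You correctly diagnose that bounding $\HN$ from above requires controlling every $\preceq$-coarsening, but the point you miss is that a single small element of $\Delta_N(A)$ automatically propagates to every coarsening, since $\preceq$ only ever replaces an element by a restriction of it. Relatedly, your early claim that $\Delta_N(A)\preceq\{\sigma\}$ would give $\HN(\Delta_N(A))\leq\hn(\{\sigma\})$ has the inequality backwards (it gives $\geq$); you notice this later, but it is a symptom of the same unresolved step.
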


\begin{proof}
	Let $B \subseteq X_n^G$ and $A=P^{-1}[B]\cap {}^N2$ and suppose that
	$\norm_2(B)= k$. Take $x\subseteq G$ such that
	$(\forall b \in B) x \nsubseteq b$ and $|x| = k$. Consider the partial
	function $\sigma:x\longrightarrow 2$ with constant value $1$. We have
	$|\sigma| = k$, and also that $[\sigma]\cap A = \emptyset$. We can conclude 
	that $\sigma \in \Delta(A)$, so for any set $\delta$ such that $\Delta(A)
	\preceq \delta$, there is a $\rho \subseteq \sigma$ where $\rho \in
	\delta$. This gives us $\hn(\delta) \leq |\rho|+1 \leq |\sigma|+1 =
	k+1$. Finally, as this is for every $\delta$, we have $\norm_4(A) =
	\HN(\Delta(A)) \leq k+1$. 
\end{proof}

\begin{definition}
	For any set $A \subseteq {^N2}$ we define 
	\[P^*(A) = \{ P(f) : f \in A \land |P(f)| = \frac{G}{2^n} = H \} .\]
\end{definition}

\begin{proposition}
	Suppose we have a set $A\subseteq {^N2}$, and for some $\sigma\in\partialfn$
	we know $\sigma \in \Delta(A)$, then $(\forall p \in P^*(A))P(\sigma)
	\nsubseteq p$. 
\end{proposition}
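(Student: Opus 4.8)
The plan is to argue by contradiction. Suppose $A\subseteq{}^N2$, $\sigma\in\Delta(A)$, and, contrary to the claim, there is some $p\in P^*(A)$ with $P(\sigma)\subseteq p$. By the definition of $P^*$ fix $f\in A$ with $P(f)=p$ (so $|p|=H$). The idea is to show that this very $f$ lies in $[\sigma]\cap A$, which is impossible since $\sigma\in\Delta(A)$ forces $[\sigma]\cap A=\emptyset$.

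The heart of the matter is thus to verify $\sigma\subseteq f$. Since $f\in{}^N2$ is total, $\sigma\subseteq f$ amounts to $f(a)=\sigma(a)$ for every $a\in\dom(\sigma)$. For $a\in\dom(\sigma)$ with $\sigma(a)=1$ we have $a\in P(\sigma)\subseteq p=P(f)$, hence $f(a)=1=\sigma(a)$; these coordinates cause no trouble. The remaining coordinates are the $a\in\dom(\sigma)$ with $\sigma(a)=0$, and here one uses that in the situations where this proposition is applied the relevant $\sigma$'s are $1$-valued throughout their domains — they arise as constant-$1$ partial functions, cf. the proof of the preceding proposition — i.e. $\dom(\sigma)=P(\sigma)$, so no such coordinates exist. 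Thus $\sigma\subseteq f$, whence $f\in[\sigma]\cap A\neq\emptyset$, the desired contradiction, and therefore $P(\sigma)\nsubseteq p$ for every $p\in P^*(A)$.

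I expect the main obstacle to be exactly this last step: turning the inclusion $P(\sigma)\subseteq p$ into the extension relation $\sigma\subseteq f$. The implication is clean for partial functions with $\dom(\sigma)=P(\sigma)$, but it genuinely uses that hypothesis: if $\sigma$ took the value $0$ at some element of $p$ then $\sigma$ need not extend $f$ and the candidate contradiction disappears. So in writing up the proof I would be careful to record that the conclusion is being invoked for such $\sigma$ (which is all the applications require). Once the relation $\sigma\subseteq f$ is in hand the conclusion is immediate from the emptiness clause in the definition of $\Delta(A)$, with no computation needed.
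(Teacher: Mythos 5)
You have put your finger on exactly the right issue, and it is worth being blunt about the consequence: the step you flag does not merely obstruct your particular argument, it shows the proposition is false as stated. (The paper, notably, supplies no proof of this proposition at all.) Concretely, take $N=2$, $n=1$ (so $G=4$, $H=2$) and $A=\{f\}$ where $f(0)=f(1)=1$. Then $P^*(A)=\{\{0,1\}\}$, while $\sigma=\{(0,0)\}$ belongs to $\Delta(A)$ (it is a minimal condition whose basic set avoids $f$, and its only proper subfunction is $\emptyset$ with $[\emptyset]\cap A\neq\emptyset$), yet $P(\sigma)=\emptyset\subseteq\{0,1\}$. More generally, whenever $\Delta(A)$ contains a function taking the value $0$ somewhere, the inclusion $P(\sigma)\subseteq p$ controls only the coordinates where $\sigma$ equals $1$ and says nothing about the rest, which is precisely the failure you anticipated in trying to upgrade $P(\sigma)\subseteq p$ to $\sigma\subseteq f$.

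Your restricted version --- adding the hypothesis that $\sigma$ is constantly $1$ on its domain, i.e.\ $\dom(\sigma)=P(\sigma)$ --- is the correct statement, and your argument does prove it: if $P(\sigma)\subseteq p=P(f)$ with $f\in A$ witnessing $p\in P^*(A)$, then $f(a)=1=\sigma(a)$ for every $a\in\dom(\sigma)$, so $f\in[\sigma]\cap A$, contradicting $[\sigma]\cap A=\emptyset$. This is also the only case that arises in the preceding proposition, where the witnessing $\sigma$ is built with constant value $1$. The honest repair is therefore to add the hypothesis to the statement of the proposition rather than to invoke it silently inside the proof; as written, no proof can exist.
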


\subsection{Relationship with the Graph Coloring Norm}
The result of this section suggests that there should not be a
strong connection between the Graph Coloring Norm, and Hall's Marriage
theorem. 

\begin{definition}
	We define the function $P^+:\pow({^N2}) \rightarrow \pow(N)$ as follows
	\[ P^+(A) = \{ P(f) : |P(f)| \geq 2 \land f\in A \} \quad\mbox{ for }
	A\subseteq {}^N 2.\] 
\end{definition}

\begin{lemma}
	For any $\sigma \in \partialfn$, any edge of $\dom(\sigma)$ that is not
	equal to $P(\sigma)$ is in $P^+(D(\{ \sigma \}))$. 
\end{lemma}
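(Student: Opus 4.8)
The plan is to produce, for a given edge $e\subseteq\dom(\sigma)$ with $e\neq P(\sigma)$, an explicit total function $f\in{}^N2$ that witnesses $e\in P^+(D(\{\sigma\}))$. The natural candidate is the characteristic function of $e$: take $f\in{}^N2$ defined by $f(a)=1$ if $a\in e$ and $f(a)=0$ otherwise (equivalently, the unique $f\in{}^N2$ with $P(f)=e$, which exists since $P\restriction{}^N2$ is a bijection onto $\pow(N)$). Then $P(f)=\{a\in N:f(a)=1\}=e$, and since $e$ is an edge we have $|P(f)|=|e|=2\geq 2$, so $f$ satisfies the size clause in the definition of $P^+$.

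It then remains to verify $f\in D(\{\sigma\})$, i.e. $\sigma\nsubseteq f$. Suppose toward a contradiction that $\sigma\subseteq f$. Then $\sigma(a)=f(a)$ for every $a\in\dom(\sigma)$, so
\[P(\sigma)=\{a\in\dom(\sigma):\sigma(a)=1\}=\{a\in\dom(\sigma):f(a)=1\}=\dom(\sigma)\cap P(f)=\dom(\sigma)\cap e.\]
Since $e\subseteq\dom(\sigma)$ (as $e$ is an edge \emph{of} $\dom(\sigma)$), the right-hand side equals $e$, giving $P(\sigma)=e$, contradicting the hypothesis $e\neq P(\sigma)$. Hence $\sigma\nsubseteq f$, so $f\in D(\{\sigma\})$, and therefore $e=P(f)\in P^+(D(\{\sigma\}))$.

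I do not anticipate any real obstacle: the argument is one explicit construction followed by a one-line computation and a contradiction. The only points that deserve a moment's care are the two uses of the hypotheses --- $e\subseteq\dom(\sigma)$ is what lets us simplify $\dom(\sigma)\cap e$ to $e$, and $e\neq P(\sigma)$ is exactly what the contradiction needs --- together with the observation that the edge condition $|e|=2$ is precisely what guarantees the $|P(f)|\geq 2$ requirement in the definition of $P^+$.
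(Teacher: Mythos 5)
Your proof is correct and uses the same witness as the paper, namely $f=P^{-1}(e)\in{}^N2$, the characteristic function of the edge $e$. The only difference is cosmetic: where the paper verifies $\sigma\nsubseteq f$ by splitting into the cases $\sigma(a)=0\lor\sigma(b)=0$ and $\sigma(a)=\sigma(b)=1$, you obtain it in one stroke from the computation $\sigma\subseteq f\implies P(\sigma)=\dom(\sigma)\cap e=e$, which is a clean consolidation of the same argument.
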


\begin{proof}
	Let $\{ a,b \} \subseteq \dom(\sigma)$ be an edge such that $P(\sigma) \neq
	\{ a,b \}$. We are going to argue that $\{a,b\}=P(f)$ for some $f\in
	{}^N2\setminus [\sigma]$. For this we consider two cases. 
	
	In the first case $\sigma(a) = 0 \lor\sigma(b) = 0$. Without loss of
	generality, we assume $\sigma(a) = 0$. As $\sigma(a) = 0$ and $f =
	P^{-1}(\{a,b\})$ is such that $f(a) = 1$, we get $f \in {}^N2\setminus
	[\sigma]$.  
	
	In the second case, we have $\sigma(a) = 1 \land \sigma(b) = 1$. However, as
	$\{ a, b \} \neq P(\sigma)$ it must be that there is a $c$ such that $c \neq
	a$, $c \neq b$, and $\sigma(c) = 1$. Thus, as $f = P^{-1}(\{a,b\})$ is such
	that $f(c) = 0$, we get $f \in {}^N 2\setminus [\sigma]$.  
\end{proof}

\begin{theorem}
	For any set $A \subseteq {^N2}$, if there is a $\sigma \in \partialfn$ such
	that $\Delta(A) \preceq \{\sigma \}$, then $P^+(A)$ requires at least
	$|\sigma| -1$ sets to be split. 
\end{theorem}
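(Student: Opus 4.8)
The plan is to show that $P^+(A)$ contains, on the vertex set $\dom(\sigma)$, the complete graph on $|\sigma|$ vertices with at most one edge removed, and then to use that such a graph cannot be split into fewer than $|\sigma|-1$ classes. The first step is a reduction to the set $D(\{\sigma\})$. By the proposition that $A=D(\Delta(A))$, and since $\Delta(A)\preceq\{\sigma\}$ says precisely that $\sigma\subseteq\tau$ for every $\tau\in\Delta(A)$, any $f\in{}^N2$ with $\sigma\nsubseteq f$ cannot contain any $\tau\in\Delta(A)$ (otherwise $\sigma\subseteq\tau\subseteq f$), so $f\in D(\Delta(A))=A$. Hence $D(\{\sigma\})\subseteq A$, and as $P^+$ is clearly monotone, $P^+(D(\{\sigma\}))\subseteq P^+(A)$. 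By the preceding lemma, every edge of $\dom(\sigma)$ other than $P(\sigma)$ belongs to $P^+(D(\{\sigma\}))$, and therefore to $P^+(A)$.

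Now set $m=|\sigma|=|\dom(\sigma)|$; the cases $m\le 2$ being immediate, assume $m\ge 3$. I claim $P^+(A)$ cannot be split by $m-2$ sets. Let $V_0,\ldots,V_{m-3}$ be any partition of $N$. The traces $V_i\cap\dom(\sigma)$ partition the $m$-element set $\dom(\sigma)$ into at most $m-2$ nonempty blocks, so either some block contains three distinct vertices, or every block has size at most $2$ and then (a short count) at least two blocks have size exactly $2$, yielding two disjoint pairs of vertices each lying inside a single block. In the first case the three vertices span three candidate edges, and in the second case we have two candidate edges; since $P(\sigma)$ is a single set, in either case at least one candidate edge $e$ is distinct from $P(\sigma)$. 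By the previous paragraph $e\in P^+(A)$, and $e$ is contained in one of the $V_i$, so $P^+(A)\restrict V_i\ne\emptyset$. Thus no partition of $N$ into $m-2$ sets splits $P^+(A)$, i.e.\ $P^+(A)$ requires at least $m-1=|\sigma|-1$ sets to be split.

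The reduction and the pigeonhole bookkeeping are routine; the only genuine subtlety — and the reason we obtain $|\sigma|-1$ rather than $|\sigma|$ — is the single forbidden edge $P(\sigma)$ coming from the preceding lemma, which is why the case analysis must always produce at least two distinct candidate edges, so that one of them really does lie in $P^+(A)$. All the ingredients used (the identity $A=D(\Delta(A))$, monotonicity of $P^+$, and the preceding lemma on edges of $\dom(\sigma)$) are already available above.
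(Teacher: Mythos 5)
Your proof is correct and takes essentially the same route as the paper: reduce to $B=D(\{\sigma\})\subseteq A$ using $\Delta(A)\preceq\{\sigma\}$, invoke the preceding lemma to get all edges of $\dom(\sigma)$ except possibly $P(\sigma)$ into $P^+(A)$, and conclude. The only difference is that you spell out the final pigeonhole/chromatic-number step (that $K_{|\sigma|}$ minus one edge cannot be split by $|\sigma|-2$ sets), which the paper simply asserts.
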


\begin{proof}
	Suppose $A \subseteq {^N2}$ and $\Delta(A) \preceq \{ \sigma \}$ for some
	$\sigma \in \partialfn$. For convenience, we will say $B =
	D(\{\sigma\})$. Notice that $\Delta(B) = \{ \sigma \}$. As $\Delta(A)
	\preceq \{ \sigma \}$, we get $B \subseteq A$, which gives us $P^+(B)
	\subseteq P^+(A)$. As a result, we get that if $P^+(B)$ cannot be split by
	$V_0, \ldots, V_{n-1}$, then $P^+(A)$ cannot be split by $V_0, \ldots,
	V_{n-1}$. But as every edge of $\dom(\sigma)$ that is not equal to
	$P(\sigma)$ is in $B$, it must be that $B$ requires at least $|\dom(\sigma)|
	- 1$ sets to be split, and therefore $A$ requires at least $|\sigma| - 1$
	sets to be split. 
\end{proof}

\begin{corollary}
	If $A \subseteq {^N2}$ is such that $\norm_4(A) = k+2 \geq \frac{N}{2} + 1$,
	then $P^+(A)$ requires at least $k$ sets to be split. 
\end{corollary}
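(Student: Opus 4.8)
\emph{Approach.} The plan is to reduce to the theorem immediately above, which says that whenever $\Delta(A)\preceq\{\sigma\}$ for a \emph{single} $\sigma\in\partialfn$, the set $P^+(A)$ cannot be split by fewer than $|\sigma|-1$ sets. So it suffices to exhibit one $\sigma$ with $\Delta(A)\preceq\{\sigma\}$ and $|\sigma|=k+1$. The degenerate cases are immediate and I would dispose of them first: if $A={}^N2$ then $P^+(A)$ is the hypergraph of all subsets of $N$ of size $\ge 2$, which contains the complete graph on $N$ vertices and hence needs $N$ colours; and if $k\le 0$ the claim is vacuous. So assume $A\neq{}^N2$ and $k\ge 1$.

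\emph{Step 1.} Apply Proposition \ref{n6.16}: from $\norm_4(A)=\HN(\Delta(A))=k+2>k+1$ we get $\delta^*\subseteq\partialfn$ with $\Delta(A)\preceq\delta^*$ whose elements have pairwise disjoint domains of size $k+1$. Since $A\neq{}^N2$ we have $\Delta(A)\neq\emptyset$, so each member of $\Delta(A)$ extends some member of $\delta^*$; in particular $\delta^*\neq\emptyset$.

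\emph{Step 2.} Show $|\delta^*|=1$. Two distinct members of $\delta^*$ have disjoint domains of size $k+1$, so $|\delta^*|\ge 2$ would force $N\ge 2(k+1)$; but the hypothesis $k+2\ge\tfrac N2+1$ says exactly $N\le 2(k+1)$. When $N$ is odd these two inequalities are incompatible, and they are also incompatible whenever $\norm_4(A)>\tfrac N2+1$ strictly; in either situation $|\delta^*|=1$. Writing $\delta^*=\{\sigma\}$, we have $|\sigma|=k+1$ and $\Delta(A)\preceq\{\sigma\}$, so the preceding theorem gives that $P^+(A)$ requires at least $|\sigma|-1=k$ sets to be split, as desired.

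\emph{Main obstacle.} The one configuration Step 2 does not cover is $N$ even with $N=2(k+1)$ and $|\delta^*|=2$, say $\delta^*=\{\sigma_1,\sigma_2\}$ with $\dom(\sigma_1),\dom(\sigma_2)$ partitioning $N$ into blocks of size $k+1$. No single $\sigma$ is then available, so one must instead work directly with $B=D(\delta^*)\subseteq A$, whence $P^+(B)\subseteq P^+(A)$, and use a two-block analogue of the lemma that every edge of $\dom(\sigma_i)$ other than $P(\sigma_i)$ lies in $P^+(D(\{\sigma_i\}))$: provided at least one of $\sigma_1,\sigma_2$ takes the value $1$ somewhere, $P^+(B)$ contains a copy of the complete graph on $k+1$ vertices with at most one edge deleted inside one block, and such a graph needs $k$ colours. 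The genuinely delicate point — and the step I expect to be the real difficulty — is the remaining sub-case in which \emph{both} $\sigma_1$ and $\sigma_2$ are identically $0$; there $B$ is exactly the set of $f\in{}^N2$ having a $1$ in each block, so $P^+(B)$ is the ``meets both blocks'' hypergraph, which is $2$-colourable, and one must either argue that this configuration cannot occur under the stated hypotheses or else strengthen the inequality in the statement to a strict one.
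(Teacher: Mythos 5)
Your approach is the intended one: the paper gives no separate proof of this corollary, and it is meant to follow from the theorem just above by using Proposition \ref{n6.16} to produce $\delta^*\succeq\Delta(A)$ whose elements have pairwise disjoint domains of size $k+1$, and then observing that the hypothesis $k+1\geq\frac{N}{2}$ leaves no room for two such domains, so $\delta^*=\{\sigma\}$ and the theorem applies with $|\sigma|=k+1$. Your Steps 1--2 carry this out correctly, and whenever $N$ is odd or the inequality $k+2\geq\frac{N}{2}+1$ is strict, your argument is a complete proof.

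The ``main obstacle'' you isolate is not a gap in your argument but a genuine defect of the statement: in the boundary case $N=2(k+1)$ the corollary is false, and the observation immediately following it in the paper constructs exactly the configuration you were worried about. Take $\delta=\{\sigma_1,\sigma_2\}$ with complementary domains of size $\frac{N}{2}$ and constant value $0$, and $A=D(\delta)$; then $\Delta(A)=\delta$, $\norm_4(A)=\HN(\delta)=\frac{N}{2}+1=k+2$ with $k=\frac{N}{2}-1$, while $P^+(A)$ consists of the sets of size at least $2$ meeting both blocks and is therefore split by the two blocks. For $N\geq 8$ this contradicts the claimed lower bound of $k\geq 3$ sets. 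So your diagnosis is exactly right: one must either strengthen the hypothesis to a strict inequality (in which case your Steps 1--2 already finish the proof) or separately exclude the all-zero two-block configuration; no further idea is missing from your proposal.
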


\begin{observation}
	There is a set $A \subseteq {^N2}$ such that  $\HN(A) =
	\lfloor\frac{N}{2}\rfloor +1$ and $P^+(A)$ can be split by two sets. 
\end{observation}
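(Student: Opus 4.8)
The plan is to give the witness explicitly. Take $f_0,f_1\in {^N2}$ to be the two constant functions, $f_0\equiv 0$ and $f_1\equiv 1$, and put $A=\{f_0,f_1\}$. (If $N\leq 1$ then $A$ is a single point or two points on which we compute by hand, and in any case $P^+(A)=\emptyset$, so assume $N\geq 2$; the general features of the argument go through regardless.) First I would verify $\HN(A)=\lfloor\frac{N}{2}\rfloor+1$, and then compute $P^+(A)$ and split it.

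For the lower bound $\HN(A)\geq\lfloor\frac{N}{2}\rfloor+1$, since $A\preceq A$ it suffices to see $\hn(A)\geq\lfloor\frac{N}{2}\rfloor+1$, and this is immediate from the definition: given any $\delta'\subseteq A$, let $\delta''$ consist of a single element of $\delta'$. A one-element family trivially has pairwise disjoint domains, and the domain of that element has size $N\geq\lfloor\frac{N}{2}\rfloor\cdot|\delta'|$ because $|\delta'|\leq 2$.

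For the upper bound I would show that no $\delta'$ with $A\preceq\delta'$ has $\hn(\delta')>\lfloor\frac{N}{2}\rfloor+1$. Fix such a $\delta'$. By definition of $\preceq$ there are $\rho_0,\rho_1\in\delta'$ with $\rho_0\subseteq f_0$ and $\rho_1\subseteq f_1$; note $\rho_0$ is $0$ wherever defined and $\rho_1$ is $1$ wherever defined, so if $\rho_0=\rho_1$ then this common function must be $\emptyset$. Now apply the defining condition of $\hn$ to the subset $\delta^\dagger=\{\rho_0,\rho_1\}\subseteq\delta'$. If $\rho_0=\rho_1=\emptyset$, then $\delta^\dagger=\{\emptyset\}$ and every $\delta''\subseteq\delta^\dagger$ has $|\bigcup_{\sigma\in\delta''}\dom(\sigma)|=0$, forcing $k=0$. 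If $\rho_0\neq\rho_1$, then $|\delta^\dagger|=2$, and any $\delta''\subseteq\delta^\dagger$ with pairwise disjoint domains is either a singleton (with $|\bigcup\dom|\leq N$) or the pair $\{\rho_0,\rho_1\}$ in the case $\dom(\rho_0)\cap\dom(\rho_1)=\emptyset$ (with $|\bigcup\dom|=|\dom(\rho_0)|+|\dom(\rho_1)|\leq N$, since these are disjoint subsets of $N$). Either way the requirement $|\bigcup\dom|\geq 2k$ forces $k\leq\lfloor\frac{N}{2}\rfloor$. Hence $\hn(\delta')\leq\lfloor\frac{N}{2}\rfloor+1$, and together with the lower bound $\HN(A)=\lfloor\frac{N}{2}\rfloor+1$.

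Finally, $P(f_0)=\emptyset$ has fewer than two elements, while $P(f_1)=N$, so $P^+(A)=\{N\}$ (for $N\geq 2$; it is empty otherwise). The partition $N=\{0\}\cup(N\setminus\{0\})$ into two nonempty blocks splits $\{N\}$, since $N$ is contained in neither block; thus $P^+(A)$ can be split by two sets, which completes the construction. I expect the only real work to be the upper bound on $\HN(A)$: one must unwind the nested quantifiers in the definitions of $\hn$ and $\HN$ together with the $\preceq$-relation, and in particular dispatch the degenerate case where the chosen restrictions of $f_0$ and $f_1$ coincide (necessarily at $\emptyset$). The reason the constant functions are the right choice is precisely that they disagree everywhere, so no common restriction — hence no $\delta'\succeq A$ built from a single long function — can push $\hn$ above $\lfloor\frac{N}{2}\rfloor+1$.
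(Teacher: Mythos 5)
Your argument is internally correct for the quantity you compute: with $A=\{f_0,f_1\}$ the two constant functions, one does get $\HN(A)=\lfloor\frac{N}{2}\rfloor+1$ when $\HN$ is applied to $A$ itself as a subset of $\partialfn$, and $P^+(A)=\{N\}$ is split by $\{0\}$ and $N\setminus\{0\}$. The problem is that this is not the quantity the observation is about. The observation closes the subsection whose stated point is that the Hall \emph{norm} $\norm_4$ can be large while $P^+(A)$ is easily split; the ``$\HN(A)$'' in the statement is an abuse of notation for $\norm_4(A)=\HN\big(\Delta_N(A)\big)$, as the paper's own proof makes plain by immediately computing $\Delta(A)$ (and note that the preceding corollary, to which this observation is the counterpoint, is phrased in terms of $\norm_4$). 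For your $A$, $\Delta(A)$ is the family of all partial functions with a two-point domain taking both values $0$ and $1$, and $\norm_4(A)=\HN(\Delta(A))=2$: the family $\{\{(a,0)\}:a\in N\}$ is $\succeq\Delta(A)$ and has $\hn=2$, while any $\delta'\succeq\Delta(A)$ with $\hn(\delta')\geq 3$ would admit a $2$--selector, forcing $|\delta'|\leq N/2$ with every member of domain size at least $2$; such a member lies below at most one of the $N(N-1)$ elements of $\Delta(A)$, too few to cover them all. So for $N\geq 4$ your set has small Hall norm and does not witness the intended phenomenon.

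The paper's witness is different in kind: it starts from $\delta=\{\sigma_1,\sigma_2\}$ with \emph{disjoint} domains of size $\lfloor\frac{N}{2}\rfloor$ and constant value $0$, and takes $A=D(\delta)$. Then $\Delta(A)=\delta$, so $\norm_4(A)=\hn(\delta)=\lfloor\frac{N}{2}\rfloor+1$, and since every $f\in A$ takes the value $1$ both inside $\dom(\sigma_1)$ and inside $\dom(\sigma_2)$, every member of $P^+(A)$ meets both $\dom(\sigma_1)$ and its complement, so $P^+(A)$ is split by $\dom(\sigma_1)$ and $N\setminus\dom(\sigma_1)$. The moral is that to make $\norm_4$ large you must make $\Delta(A)$ (not $A$) a small family with large pairwise disjoint domains, which forces $A$ to be the large set $D(\delta)$ rather than a two-element set of total functions.
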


\begin{proof}
	Consider any set $\delta\subseteq \partialfn$ where 
	\begin{enumerate}
		\item  $|\delta| = 2$,
		\item if $\delta = \{ \sigma_1, \sigma_2\}$, then $\dom(\sigma_1) \cap
		\dom(\sigma_2) = \emptyset$, 
		\item and $(\forall \sigma \in \delta) (\forall a \in \dom(\sigma))
		\sigma(a) = 0$, 
		\item $|\sigma_1|=|\sigma_2|=\lfloor\frac{N}{2}\rfloor$, so $\hn(\delta) = 
		\lfloor \frac{N}{2}\rfloor+1$.  
	\end{enumerate}
	Let $A = D(\delta)$, we have $\Delta(A) = \delta$. Finally if $\sigma \in
	\delta$, we have $P^+(A)$ can be split by $\dom(\sigma), N\bs \dom(\sigma)$.  
	
\end{proof}

\begin{observation}
	Let $A = \{ f \in {^N2} :  |\{ a \in N : f(a) = 1 \}| = 2 \}$. Clearly we
	have $P^+(A) = \{ e \subseteq N : |e| = 2 \}$. We also know that $P^+(A)$
	cannot be split by $N-1$ sets. Consider any $\sigma \in\partialfn$ where
	$|\sigma| = 3$ and $(\forall a \in \dom(\sigma)) \sigma(a) = 1$, we have
	$[\sigma] \cap A = \emptyset$. Consequently, $\sigma \in
	\Delta(A)$. Furthermore if $\Delta(A) \preceq \delta$, then there is a $\rho
	\subseteq \sigma$ such that $\rho \in \delta$. This gives us $\norm_4(A)
	\leq 4$. 
\end{observation}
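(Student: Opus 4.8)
The plan is to establish, in order, the two auxiliary facts about $P^+(A)$ and then the bound $\norm_4(A)\le 4$, the last being pure definition-chasing through $\HN$ and $\hn$.

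First I would verify $P^+(A)=\{e\subseteq N:|e|=2\}$. By definition $P^+(A)=\{P(f):f\in A\land|P(f)|\ge 2\}$, but $f\in A$ already forces $|P(f)|=2$, so the side condition is vacuous and $P^+(A)=\{P(f):f\in A\}$; since $P\restriction{}^N2$ is a bijection of ${}^N2$ onto $\pow(N)$, as $f$ ranges over $A$ the value $P(f)$ ranges over exactly the two-element subsets of $N$. Next, the splitting claim: $P^+(A)$ is the edge set of the complete graph on $N$ vertices, so it cannot be split by $N-1$ sets. Indeed, any partition of $N$ into $N-1$ parts has, by the pigeonhole principle, a part $V_i$ with $|V_i|\ge 2$; then $V_i$ contains an element of $P^+(A)$, so $P^+(A)\restrict V_i\neq\emptyset$. (This is the instance $c=N$ of the earlier Observation; combined with the Corollary relating $\norm_4$ to splitting numbers, it is precisely what makes the smallness of $\norm_4(A)$ noteworthy.)

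For the main bound, recall $\norm_4(A)=\HN(\Delta(A))=\max\{\hn(\delta):\Delta(A)\preceq\delta\}$, so it suffices to bound $\hn(\delta)$ by $4$ for every $\delta\succeq\Delta(A)$. Assume $N\ge 3$ (the cases $N\le 2$ are trivial, $A$ having at most one element). Fix $\sigma\in\partialfn$ with $|\dom(\sigma)|=3$ and $\sigma$ constantly $1$; every $f\in[\sigma]$ has $|P(f)|\ge 3>2$, hence $f\notin A$, so $[\sigma]\cap A=\emptyset$. By Lemma~\ref{subset delta} there is $\rho_0\subseteq\sigma$ with $\rho_0\in\Delta(A)$ and $|\dom(\rho_0)|\le 3$. (One checks $\sigma$ itself already lies in $\Delta(A)$: a proper subfunction $\rho\subsetneq\sigma$ is constantly $1$ on a set of size $\le 2$, and extending that set to size $2$ and taking the corresponding total function puts a member of $A$ into $[\rho]$; but Lemma~\ref{subset delta} makes this unnecessary.) Now let $\delta\succeq\Delta(A)$ be arbitrary. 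By definition of $\preceq$ there is $\rho\in\delta$ with $\rho\subseteq\rho_0$, hence $|\dom(\rho)|\le 3$. Testing the singleton $\delta'=\{\rho\}$ in the definition of $\hn(\delta)$, the only helpful $\delta''\subseteq\delta'$ is $\{\rho\}$, which forces any admissible $k$ to satisfy $k\le|\dom(\rho)|\le 3$; therefore $\hn(\delta)\le 3+1=4$. Since $\delta$ was arbitrary, $\norm_4(A)=\HN(\Delta(A))\le 4$.

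I do not anticipate a genuine obstacle: every step is definition-chasing. The one place to be careful is the pair of nested maxima — one must notice that producing a single small-domain element of $\Delta(A)$ suffices, because $\preceq$ then forces a small-domain function into every refinement $\delta$, and plugging a singleton $\delta'$ into the definition of $\hn$ immediately caps $\hn(\delta)$, hence $\HN(\Delta(A))$.
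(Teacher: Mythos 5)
Your argument is correct and follows essentially the same route as the paper: exhibit a constant-$1$ function $\sigma$ on a three-element domain with $[\sigma]\cap A=\emptyset$, observe that $\preceq$ then forces some $\rho$ with $\dom(\rho)\subseteq\dom(\sigma)$ into every $\delta\succeq\Delta(A)$, and test the singleton $\{\rho\}$ in the definition of $\hn$ to cap $\hn(\delta)$, hence $\HN(\Delta(A))$, at $4$. Your routing through Lemma~\ref{subset delta} instead of verifying $\sigma\in\Delta(A)$ directly is an inessential variation, and the pigeonhole verification of the splitting claim matches the paper's earlier observation with $c=N$.
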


\begin{proposition}
	If $A \subseteq {^N2}$ and $p \subseteq N$ are such that
	for every $p\subseteq q \subseteq N$ we have $q \notin P^+(A)$, then
	$\norm_4(A) \leq |p|+1$. 
\end{proposition}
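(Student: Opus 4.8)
The plan is to exhibit a partial function in $\Delta_N(A)$ whose domain has size at most $|p|$, and then to read off the bound on $\norm_4(A)=\HN(\Delta_N(A))$ directly from the definition of $\hn$; this is essentially the strategy used for the proposition relating $\norm_4$ to $\norm_2$ earlier in this subsection.

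First I would take $\sigma_p\in\partialfn$ to be the constant function with value $1$ and domain $p$. If $f\in{}^N2$ extends $\sigma_p$ then $p\subseteq P(f)$, so $q:=P(f)$ satisfies $p\subseteq q\subseteq N$; provided $|p|\ge 2$ we also have $|P(f)|\ge 2$, and then the hypothesis gives $q\notin P^+(A)$, hence $f\notin A$. So $[\sigma_p]\cap A=\emptyset$, and Lemma \ref{subset delta} produces $\tau\subseteq\sigma_p$ with $\tau\in\Delta_N(A)$ and $|\dom(\tau)|\le|\dom(\sigma_p)|=|p|$.

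Next I would unwind $\HN$. Fix any $\delta'\subseteq\partialfn$ with $\Delta_N(A)\preceq\delta'$. Since $\tau\in\Delta_N(A)$, some $\rho\in\delta'$ satisfies $\rho\subseteq\tau$, so $|\dom(\rho)|\le|p|$. Applying the defining condition of $\hn(\delta')$ to the one-element subfamily $\{\rho\}$: its only pairwise-disjoint-domain subfamilies are $\emptyset$ and $\{\rho\}$, whose domain-unions have sizes $0$ and $|\dom(\rho)|\le|p|$, so any value $k+1$ realized by $\hn(\delta')$ must have $k\le|p|$. Hence $\hn(\delta')\le|p|+1$, and since $\delta'$ was an arbitrary $\preceq$-extension of $\Delta_N(A)$ we conclude $\norm_4(A)=\HN(\Delta_N(A))\le|p|+1$.

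The step ``$|P(f)|\ge 2$'' used $|p|\ge 2$, and the case $|p|=1$ is the real obstacle. Writing $p=\{a\}$, the hypothesis still prevents any $f\in A$ with $f(a)=1$ from having a second coordinate equal to $1$, so $[\{(a,1),(c,1)\}]\cap A=\emptyset$ for every $c\in N\setminus\{a\}$; running Lemma \ref{subset delta} on each such two-element $\sigma$ yields, for every $c$, an element of $\Delta_N(A)$ that either has domain of size $\le 1$ (so the argument above applies) or equals $\{(a,1),(c,1)\}$. In the latter case $\Delta_N(A)$ contains all of the sets $\{(a,1),(c,1)\}$ with $c\ne a$, whose domains pairwise meet exactly in $\{a\}$; the matching elements in any $\delta'\succeq\Delta_N(A)$ — if not already of size $\le 1$ — then form a family of size $N-1$ all of whose pairwise-disjoint subfamilies are singletons, and feeding this family to the $\hn$ definition again forces $\hn(\delta')\le 2=|p|+1$. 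The degenerate case $|p|=0$ I would set aside (there the bound $\norm_4(A)\le 1$ can fail unless $A=\emptyset$, so presumably the intended hypothesis is $p\ne\emptyset$), and the few sets arising for $N\le 2$ can be checked by hand. The heart of the difficulty is that the clean ``short member of $\Delta_N(A)$'' device is automatic only when $|p|\ge 2$.
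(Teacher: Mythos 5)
Your argument for $|p|\ge 2$ is exactly the paper's: take the constant-$1$ function $\sigma_p$ with domain $p$, observe $[\sigma_p]\cap A=\emptyset$, invoke Lemma \ref{subset delta} to get a member of $\Delta_N(A)$ of domain size at most $|p|$, and bound $\hn(\delta')$ for every $\delta'\succeq\Delta_N(A)$ via the singleton subfamily $\{\rho\}$ — a step the paper leaves implicit and you spell out correctly. Your caution about $|p|\le 1$ is not over-scrupulousness but a genuine improvement on the paper. The paper dispatches $|p|=1$ with ``now easily $\norm_4(A)\le 2$''; your family $\{(a,1),(c,1)\}$, $c\ne a$, together with the observation that its members' domains pairwise meet in $\{a\}$, is essentially the missing argument (valid once $N\ge 3$). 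For $p=\emptyset$ the paper asserts $\norm_4(A)=1$, but that is false: take $A=\{f_0\}$ with $f_0$ the constant-$0$ function, so $P^+(A)=\emptyset$ and the hypothesis holds vacuously, yet $\Delta_N(A)=\bigl\{\{(a,1)\}:a\in N\bigr\}$ consists of functions with pairwise disjoint one-point domains, whence $\hn\bigl(\Delta_N(A)\bigr)=2$ and $\norm_4(A)=2>1$. So your instinct that the statement needs a side hypothesis (say $|p|\ge 1$ and $N\ge 3$, or simply $|p|\ge 2$) to cover the degenerate cases is correct, and the paper's own proof does not close them.
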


\begin{proof}
	Suppose $A \subseteq {^N2}$ and $p\subseteq N$ satisfy 
	\[(\forall q \subseteq N)( p \subseteq q \implies q \notin P^+(A)).\]
	If $p=\emptyset$, then $P^+(A)=\emptyset$ and hence $\Delta(A)$ consists of
	functions with domains of size $\leq 2$ and values $1$. Hence  $\norm_4(A)=1$.  
	
	\noindent If $p=\{x\}$, then for $f\in A$ we have that either $f(x)=0$ or
	else $f(y)=0$ for all $y\neq x$. Now easily $\norm_4(A)\leq 2$.  
	
	Assume now that $|p|\geq 2$. Let $\sigma\in \partialfn$ be such that
	$\dom(\sigma) = p$ and $\sigma(a) = 1$ for all $a \in p$. We have $[\sigma]
	\cap A = \emptyset$, therefore there is a $\rho \subseteq \sigma$ such that
	$\rho \in \Delta(A)$. This gives us $\norm_4(A) \leq |\rho| + 1 \leq
	|\sigma| + 1 = |p| + 1$.  
\end{proof}

\begin{corollary}
	For any $A \subseteq {^N2}$, if for some $n \leq N$ we have $(\forall p \in
	P^+(A)) |p| \leq n$, then $\norm_4(A) \leq n + 1$. 
\end{corollary}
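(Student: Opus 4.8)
The plan is to reduce this to the immediately preceding Proposition, which says that $\norm_4(A)\le |p|+1$ whenever $p\subseteq N$ is such that every $q$ with $p\subseteq q\subseteq N$ satisfies $q\notin P^+(A)$. First I would observe that the hypothesis ``$(\forall p\in P^+(A))\,|p|\le n$'' is precisely the statement that no subset of $N$ of size $\ge n+1$ lies in $P^+(A)$. Thus, if there happens to be an $n$-element set $p\subseteq N$ with $p\notin P^+(A)$, then every $q$ with $p\subseteq q\subseteq N$ satisfies $q\notin P^+(A)$ -- either $q=p$, or $|q|\ge n+1$ -- and the Proposition gives at once $\norm_4(A)\le|p|+1=n+1$. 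This settles the ``generic'' situation.

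The step I expect to be the real obstacle is the remaining case, in which every $n$-element subset of $N$ already belongs to $P^+(A)$, so that no $p$ of size $\le n$ is eligible. Here I would argue directly, by contradiction, after two reductions: the crude bound that $\hn$, and hence $\HN$, of a family of partial functions with common domain $\subseteq N$ never exceeds $N+1$ disposes of $N\le n$ at once, while the genuinely delicate boundary value $N=n+1$ I would set aside for separate treatment; so assume $N\ge n+2$ and $\norm_4(A)=\HN(\Delta_N(A))\ge n+2$. Then $\HN(\Delta_N(A))>n+1$, so Proposition~\ref{n6.16} supplies $\delta^*\subseteq\partialfn$ with $\Delta_N(A)\preceq\delta^*$ whose members have pairwise disjoint domains, each of size $n+1$. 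Since these domains are pairwise disjoint, $\Delta(D(\delta^*))=\delta^*$, hence $\Delta(A)\preceq\Delta(D(\delta^*))$, and the earlier theorem characterising inclusion through $\preceq$ gives $D(\delta^*)\subseteq A$; moreover $\delta^*\ne\emptyset$, since $\Delta(A)=\emptyset$ would force $A={}^N2$ and then $N\in P^+(A)$, contradicting the hypothesis.

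It remains to exhibit $f\in D(\delta^*)$ with $|P(f)|\ge n+1$; since then $P(f)\in P^+(A)$, this contradicts the hypothesis and finishes the argument. I would take $f$ to be constantly $1$ on the part of $N$ lying in no $\dom(\tau)$, and on $\dom(\tau)$ for $\tau\in\delta^*$ let $f$ be constantly $1$ if $\tau$ is not constantly $1$ and constantly $1$ off a single point of $\dom(\tau)$ otherwise; then $f$ disagrees with every $\tau$ on $\dom(\tau)$, so $f\in D(\delta^*)\subseteq A$. For the count: if some $\tau$ is not constantly $1$ then $f$ already carries $n+1$ ones on $\dom(\tau)$; if every $\tau$ is constantly $1$ then $|P(f)|=N-|\delta^*|$, and since the $|\delta^*|$ disjoint $(n+1)$-element sets lie in $N$ we have $(n+1)|\delta^*|\le N$, so $|\delta^*|\ge N-n$ would give $n(N-n-1)\le 0$, impossible for $n\ge1$, $N\ge n+2$; hence $|\delta^*|\le N-n-1$ and again $|P(f)|\ge n+1$. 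The two points where care is really needed are exactly this count and the boundary cases -- $N=n+1$, and (when $n\le 1$, where the hypothesis merely says $P^+(A)=\emptyset$) the small values of $n$ -- which do not come under the inclusion--exclusion construction and must be examined, or the hypothesis slightly strengthened, by hand.
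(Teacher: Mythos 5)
Your first case is handled correctly: if some $n$-element set $p\subseteq N$ lies outside $P^+(A)$, then every $q$ with $p\subseteq q\subseteq N$ avoids $P^+(A)$ and the preceding Proposition gives $\norm_4(A)\le n+1$. Your construction for the remaining case is also sound when $n\ge 1$ and $N\ge n+2$: Proposition \ref{n6.16} (which needs $n+1<N$, hence your restriction) yields a nonempty $\delta^*$ with pairwise disjoint domains of size $n+1$ and $D(\delta^*)\subseteq A$, and your $f\in D(\delta^*)$ really does satisfy $|P(f)|\ge n+1\ge 2$ — I checked the count: $(n+1)|\delta^*|\le N$ together with $|\delta^*|\ge N-n$ forces $N\le n+1$, so $|\delta^*|\le N-n-1$. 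This gives $P(f)\in P^+(A)$ of size $>n$, the desired contradiction. The paper offers no proof of this corollary at all, so in that range your argument supplies more than the paper does.

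The cases you set aside, however, are not merely delicate: the corollary is false there, so the gap cannot be closed. For $N=n+1\ge 2$ take $A={}^N2\setminus\{f_1\}$ where $f_1$ is the constant-$1$ function; then every $p\in P^+(A)$ satisfies $2\le |p|\le N-1=n$, yet $\Delta(A)=\{f_1\}$ and $\norm_4(A)=\HN(\{f_1\})=N+1=n+2$. Similarly for $n=0$ the asserted bound $\norm_4(A)\le 1$ fails already for $A=\{f_0\}$ with $f_0$ the constant-$0$ function, since $\Delta(A)=\{\{(a,1)\}:a\in N\}$ has $\HN=2$. What does hold in general, by applying the preceding Proposition to an arbitrary $p$ of size $n+1$ (all of whose supersets automatically miss $P^+(A)$), is $\norm_4(A)\le n+2$; this matches the bound $\norm_4(A)\le 4$ obtained for $n=2$ in the Observation immediately before the corollary. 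So the statement should either be weakened to $n+2$, or restricted to $n\ge 1$ and $N\ge n+2$, where your two-case argument proves the sharper $n+1$.
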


Thus we see that there are sets where the Graph Coloring norm has a large
value, and the Hall norm has a small value, and vice-versa, that is, there
is not a strong relationship between the two norms.


\begin{thebibliography}{1}

\bibitem{Baju95}
Tomek Bartoszy\'nski and Haim Judah.
\newblock {\em {Set Theory: On the Structure of the Real Line}}.
\newblock A K Peters, Wellesley, Massachusetts, 1995.

\bibitem{FrSh:406}
David H. Fremlin and Saharon Shelah. 
\newblock {Pointwise compact and stable sets of measurable functions}. 
\newblock {\em Journal of Symbolic Logic}, 58:435--455, 1993.

\bibitem{RoSh:628}
Andrzej Roslanowski and Saharon Shelah.
\newblock {Norms on possibilities II: More ccc ideals on
  $2^{\textstyle\omega}$}.
\newblock {\em {Journal of Applied Analysis}}, 3:103--127, 1997.
\newblock arxiv:math.LO/9703222.

\bibitem{RoSh:470}
Andrzej Roslanowski and Saharon Shelah.
\newblock {Norms on possibilities I: forcing with trees and creatures}.
\newblock {\em {Memoirs of the American Mathematical Society}}, 141(671):xii +
  167, 1999.
\newblock arxiv:math.LO/9807172.

\bibitem{RoSh:672}
Andrzej Roslanowski and Saharon Shelah.
\newblock {Sweet {\&} Sour and other flavours of ccc forcing notions}.
\newblock {\em Archive for Mathematical Logic}, 43:583--663, 2004.
\newblock arxiv:math.LO/9909115.

\bibitem{Sh:207}
Saharon Shelah. 
\newblock {On cardinal invariants of the continuum}.
\newblock In {\em Axiomatic set theory (Boulder, Colo., 1983)}, volume 31 of
{\em Contemp. Mathematics},
\newblock pages 183--207. Amer. Math. Soc., Providence, RI, 1984.

\bibitem{Sh:f}
Saharon Shelah.
\newblock {\em {Proper and improper forcing}}.
\newblock {Perspectives in Mathematical Logic}. {Springer}, 1998.

\end{thebibliography}
\end{document}